	\tikzset{%
		cross/.style = {preaction={draw=white,line width=4pt}},
		smalldot/.style = {draw,circle,inner sep=1pt,thick,fill},
		dot1/.style = {draw,circle,minimum size=3pt,inner sep=0pt,outer sep=3pt,thick,fill},
		dot2/.style = {draw,circle,minimum size=3pt,inner sep=0pt,outer sep=3pt,thick},
		dot3/.style = {draw,rectangle,minimum size=3pt,inner sep=0pt,outer sep=3pt,thick,fill},
		dot4/.style = {draw,rectangle,minimum size=3pt,inner sep=0pt,outer sep=3pt,thick},
		steiner1/.style = {},
		steiner2/.style = {dash pattern=on .2cm off .1cm,thick},
		steiner3/.style = {very thick,dotted},
		steiner4/.style = {decorate,decoration={crosses,segment length=.18cm}},
		steiner5/.style = {decorate,decoration={zigzag,amplitude=.04cm}},
		steiner6/.style = {decorate,decoration={shape backgrounds,shape=rectangle,shape size=.05cm,shape sep=.1cm}},
		region/.style = {shape=circle,draw,text width=3ex,align=center,inner sep=0pt},
	}
	\providecommand{\R}{\mathbb R}
	\providecommand{\Z}{\mathbb Z}
	\theoremstyle{plain}
	\newtheorem{lemma}{Lemma}[section]
	\newtheorem{theorem}[lemma]{Theorem}
	\newtheorem{maintheorem}{Theorem}
	\newtheorem{corollary}[lemma]{Corollary}
	\newtheorem{proposition}[lemma]{Proposition}
	\theoremstyle{definition}
	\newtheorem{definition}[lemma]{Definition}
	\theoremstyle{remark}
	\newtheorem{remark}[lemma]{Remark}
	\DeclareMathOperator{\Star}{star}
	\DeclareMathOperator{\dist}{dist}
	\DeclareMathOperator{\sdist}{d}
	\DeclareMathOperator{\vol}{vol}
	\DeclareMathOperator{\area}{area}
	\newlist{caselist}{enumerate}{1}
	\setlist[caselist]{font=\rmfamily\mdseries\itshape,label=Case~\arabic*:,ref=Case~\arabic*,listparindent=0pt,labelwidth=0pt,itemindent=!,labelindent=0pt,leftmargin=0pt,align=left,parsep=2pt plus 2pt minus 1pt,itemsep=.25\baselineskip}
\newcommand*\dia{\textup{\textsf{dia}}}
\newcommand*\cds{\textup{\textsf{cds}}}
\newcommand*\sqp{\textup{\textsf{sqp}}}
\newcommand*\bnn{\textup{\textsf{bnn}}}
\newcommand*\pcu{\textup{\textsf{pcu}}}
\newcommand*\ths{\textup{\textsf{ths}}}
\newcommand*\srs{\textup{\textsf{srs}}}
\newcommand*\hcb{\textup{\textsf{hcb}}}
\newcommand*\sql{\textup{\textsf{sql}}}
\newcommand*\gsquare{\mathbin{\scalebox{.707}{$\square$}}}
\DeclareMathOperator{\rank}{rank}
\begin{document}

\title[Periodic networks of fixed degree minimizing length]{Periodic 
networks of fixed degree\\minimizing length}
\date{\today}
\author[Alex \& Grosse-Brauckmann]{Jerome Alex and Karsten Grosse-Brauckmann}
\address{Technische Universit\"at Darmstadt, Fachbereich Mathematik (AG 3),
    Schlossgartenstr.~7, 64289 Darmstadt, Germany}
\email{jalex, kgb@mathematik.tu-darmstadt.de}
\subjclass[2010]{05C10; 53A10, 49Q05}

\begin{abstract}
  We study networks in $\R^n$ which are
  periodic under a lattice of rank~$n$
  and have vertices of prescribed degree $d\ge 3$.
  We minimize the length of the quotient networks, 
  subject to the constraint that the fundamental domain has 
  $n$-dimensional volume~$1$.
  For $n=3$ and degree $3\leq d\leq 6$
  we determine the minimizing networks with the least number 
  of vertices in the quotient, while for $d\ge 7$ we state a length estimate.
  For general $n$, we determine the unique minimizers 
  with $d=n+1$ and $d=2n$.
\end{abstract}

\maketitle

\section{Introduction}

We use the term \emph{network} to denote a connected
graph with straight edges in Euclidean space~$\R^n$.
We assume the network is $n$-periodic, that is, 
invariant under some lattice~$\Lambda$ of rank~$n$, 
and that its quotient $N/\Lambda\subset\R^n/\Lambda$ is finite.
We are interested to minimize the length $L$ of this quotient,
without prescribing the lattice;
to set up a well-posed minimization problem we fix 
the volume $V$ of the fundamental domain $\R^n/\Lambda$.
Equivalently, we minimize the length quotient $L^n/V$.
In our previous work~\cite{periodicsteiner} we prove that for given 
dimension $n\ge 2$ the minimizers have $2n-2$ vertices, each of degree~$d=3$;
%we call them Steiner networks.
for Euclidean $3$-space we determine the \srs-network 
with the body centred cubic lattice as the unique minimizer.

For a natural system in Euclidean space,
material reasons can prescribe, however, a degree $d>3$ at the vertices.  
An example of a network with degree~$4$ is the well-known diamond network, 
see Figure~\ref{fig:network4}.
For simplicity, we consider here only the so-called $d$-regular case 
that $d$ agrees at all vertices.  It seems natural to ask:
\emph{What are the triply periodic networks in~$\R^3$ minimizing $L^3/V$
among networks with a prescribed degree $d$?}
We also ask:
\emph{How much larger is $L^3/V$ for $d\ge 4$ compared with
the case $d=3$?}
In the present paper we address these questions for networks 
whose quotient $N/\Lambda$ has the minimal number of vertices,
a case we call \emph{irreducible}.

We state answers to the questions in terms of
the graphs $B_k$, $D_{\ell,k}$, $D_k$ on one or two vertices
which are defined in Section~\ref{sec:topology}:
\begin{maintheorem}\label{thm:mainlow}
  Irreducible triply periodic networks $N\subset\R^3$ 
  with degree~$d\in \{4,5,6\}$ 
  can only have one of the following quotient graphs:
  $D_4$ or $D_{1,2}$ for $d=4$; $D_5$ or $D_{1,3}$ for $d=5$: and $B_3$ for $d=6$.
  The respective minimal values of $L/V^{1/3}$
  are quoted in Table~\ref{tab:summary}.
  The minimal network is unique (up to similarity)
  in each case except for $D_{1,2}$,
  where a one-parameter family minimizes.
  For $d\ge 7$ each irreducible network has a length quotient 
  still larger than all values for minimizers with $d=3$ to $6$
  quoted in Table~\ref{tab:summary}.
\end{maintheorem}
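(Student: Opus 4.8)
The plan is to reduce Theorem~\ref{thm:mainlow} to a short list of explicit finite-dimensional optimization problems, one per admissible quotient graph. A realization of an irreducible network is encoded by the period lattice $\Lambda\subset\R^3$ (equivalently its covolume $V$ and shape), the quotient multigraph $G=N/\Lambda$ with the holonomies of a cycle basis, and the positions of the vertices of $G$; the scale-invariant quantity $L^3/V$ is a smooth function of this data off a degenerate locus. First I would determine which graphs $G$ can occur, then reduce $L^3/V$ to a low-dimensional problem for each and minimize.

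\emph{Classification of the quotient graph.} If $G$ is $d$-regular on $v$ vertices then $e=dv/2$ and $b_1(G)=e-v+1=\tfrac12(d-2)v+1$. A rank-$3$ periodic structure exists exactly when $H_1(G;\Z)\cong\Z^{b_1(G)}$ surjects onto $\Z^3$, i.e.\ when $b_1(G)\ge3$; with the parity requirement that $dv$ be even, this forces the irreducible vertex number $v=2$ for $d\in\{4,5\}$ and $v=1$ for $d=6$. The $d$-regular multigraphs on these vertex sets are the bouquet $B_{d/2}$ (hence $B_3$ for $d=6$), and for $v=2$ the graphs $D_{\ell,k}$ with $\ell$ loops at each vertex and $k$ connecting edges, $2\ell+k=d$. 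Here $k=0$ is disconnected, and $k=1$ supports no critical point of the constrained problem: at a vertex incident only to loop edges and one bridge, the loop edges pair up into opposite directions (being $\Lambda$-translates of each other), so the balancing equation $\sum_{e\ni v}\hat e=0$ reduces to $\hat r=0$ for the bridge direction $r$, which is impossible; equivalently, translating that vertex orbit fixes every loop and strictly shortens the bridge. Hence the only candidates are $D_4,D_{1,2}$ for $d=4$, $D_5,D_{1,3}$ for $d=5$, and $B_3$ for $d=6$.

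\emph{The explicit minimizations.} For $B_3$ the three loop holonomies form a basis of $\Lambda$, so $L^3/V=\bigl(|a_1|+|a_2|+|a_3|\bigr)^3/|\det(a_1,a_2,a_3)|$, which is $\ge27$ by Hadamard and AM--GM, with equality only for an orthogonal equilateral basis; this gives \pcu. For $D_4$, eliminating the spanning-tree edge turns $L^3/V$ into $\bigl(\sum_{i=1}^4|x_i-x_0|\bigr)^3/\bigl(6\,\vol(x_1x_2x_3x_4)\bigr)$ in five free points $x_0,x_1,\dots,x_4\in\R^3$; minimizing first over $x_0$ (a Fermat point of the tetrahedron) and then over the shape of the tetrahedron forces a regular tetrahedron with $x_0$ at its circumcentre, that is the diamond net \dia, with value $12\sqrt3$. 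For $D_{1,2},D_{1,3},D_5$ the balancing conditions first cut down the effective dimension (for $D_{1,2}$ they force the two connecting edges at each vertex to be collinear, which is precisely what produces the asserted one-parameter family of minimizers), and the remaining small optimization is then carried out explicitly to yield the entries of Table~\ref{tab:summary}, the overall minimizer for each $d$, and the uniqueness statements. I expect this part, in particular establishing global rather than merely critical optimality and identifying the minimizing configuration or family for the two-vertex graphs, to be the main obstacle.

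\emph{The estimate for $d\ge7$.} The same bookkeeping shows an irreducible quotient is $B_{d/2}$ with $d/2\ge4$ loops, or, for $d$ odd, some $D_{\ell,k}$ with $3\le k$ and $e=d\ge7$ edges. In either case $G$ carries three edge-disjoint cycles $\gamma_1,\gamma_2,\gamma_3$ with linearly independent holonomies $h_1,h_2,h_3\in\Lambda$, plus at least one further edge. Using $|h_i|\le\operatorname{length}(\gamma_i)$, Hadamard, and AM--GM one gets $V\le|\det(h_1,h_2,h_3)|\le|h_1||h_2||h_3|\le\bigl(\tfrac13\sum_i\operatorname{length}(\gamma_i)\bigr)^3$, while the leftover edge makes $\sum_i\operatorname{length}(\gamma_i)<L$ strict, so $L^3/V>27$. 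Since $27$, attained by \pcu, is the largest of the minimal values for $3\le d\le6$ recorded in Table~\ref{tab:summary}, this proves the last assertion. The only delicate point is to produce the three edge-disjoint cycles with independent holonomies when the loop holonomies of $D_{\ell,k}$ do not already span $\R^3$; this needs a short case analysis, using that the full cycle-holonomy system generates $\Z^3$ and that $k\ge7$ whenever no loops are present.
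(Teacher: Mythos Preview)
Your classification of the admissible quotient graphs and the treatments of $B_3$ and $D_4$ are essentially the paper's arguments (Proposition~\ref{prop:structure}, Theorem~\ref{th:pcu}, Theorem~\ref{thm:dipole}); the sketch for $D_{1,2}$ via balancing is equivalent to the paper's triangle-inequality proof of Theorem~\ref{thm:diacds}\,\ref{item:cds}. You rightly flag the $D_{1,3}$ and $D_5$ minimizations as the hard part: in the paper these are Theorems~\ref{thm:bnn} and~\ref{thm:sqp}, and the integer constraint coming from $\operatorname{rank}H_1(D_5)=4>3$ forces a genuine case analysis over lattice coefficients that your proposal does not yet contain.

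There is, however, a concrete error in your $d\ge 7$ argument. You assert that $27$, attained by \pcu, is the largest of the minimal values for $3\le d\le 6$ in Table~\ref{tab:summary}. It is not: the $d=5$ entries are $L^3/V=27\sqrt3\approx 46.8$ for \bnn\ and $L^3/V=405/8=50.625$ for \sqp, both strictly larger than $27$. The theorem asks you to beat \emph{all} listed values, hence to prove $L^3/V>405/8$ for every irreducible network with $d\ge 7$. Your three-edge-disjoint-cycles argument, even granting the case analysis you postpone, can only yield $L^3/V>27$, which is too weak by nearly a factor of~$2$.

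The paper closes this gap in Corollary~\ref{cor:highdegthree} by a different mechanism: rather than comparing $L$ against three cycle lengths, it locates inside $N/\Lambda$ a triply periodic \emph{subnetwork} $N'$ whose quotient is one of $B_4$, $D_{1,3}$, or $D_5$, and then invokes the already established sharp bounds for those graphs (Theorems~\ref{th:pcu}, \ref{thm:bnn}, \ref{thm:sqp}) together with $L(N)>L(N')$. In particular, the $d=5$ analysis is not merely a parallel case but is logically needed as input for the $d\ge 7$ estimate. Your approach would have to be substantially strengthened---or replaced by this subnetwork reduction---to reach the required threshold $405/8$.
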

\noindent
See Theorems \ref{thm:diacds}, \ref{thm:bnn}, \ref{thm:sqp}, \ref{th:pcu}
for the precise statements for degree $d=4$ to $6$, 
and Corollary~\ref{cor:highdegthree} for the estimate for $d\ge 7$.

Our determination of the minimal networks in Euclidean space~$\R^3$ 
seems in agreement with the occurence of these networks in natural systems,
although the reasons leading to the networks in nature
are certainly more complex.
Indeed, as Table~\ref{tab:summary} indicates, 
the length quotient $L/V^{1/3}$ for the frequently encountered diamond network 
is only by 3\% larger than for the optimal Steiner network \srs. 
The two families \ths\ and \cds\ admit deformations into networks 
of smaller length, and so are less likely to occur.
Thus the next best candidate is a network which is also observed, 
namely \pcu\ with degree $d=6$, 
and a quotient by 12\% larger than~\srs.
There is a significant gap to the networks with $d=5$, 
which seem of minor physical importance, 
as their quotient $L/V^{1/3}$ is about 35\% larger compared to~\srs.
%Finally, no network with degree $d\ge 7$ can have a smaller quotient.

\begin{table}
	\centering
	\begin{tabular}{l*7c}
		\toprule
		degree & \multicolumn2c{$d=3$} & \multicolumn2c{$d=4$} & \multicolumn2c{$d=5$} & $d=6$\\
		\#vertices/\#edges\hspace*{-2em} & \multicolumn2c{$4$ / $6$} & \multicolumn2c{$2$ / $4$} & \multicolumn2c{$2$ / $5$} & $1$ / $3$\\
		\midrule
quotient graph & $K_4$ & $D_1\gsquare D_2$ & $D_4$ & $D_{1,2}$ & $D_{1,3}$ & $D_5$ & $B_3$ \\[.3mm]
		\multirow2*{minimizer} & \srs & \ths & \dia & \cds & \bnn & \sqp & \pcu \\[-.5mm]
		         & unique & family & unique & family & unique & unique & unique \\[.5mm]
		related surface & gyroid & --- & $D$ & $CLP$ & $H'$-$T$ & --- & $P$ \\
		\midrule
		$L/V^{1/3}$ & $\approx2.67$ & $\approx2.73$ & $\approx2.75$ & 3 & $\approx3.6$ & $\approx3.7$ & 3\\
	   & 100\% & 102.0\% & 102.9\% & 112.3\% & 134.8\% & 135.2\% & 112.3\%\\\bottomrule\\
	\end{tabular}
	\caption{Minimizing triply periodic networks with prescribed degree $3$ to~$6$
and their length quotients; all networks with the least possible number of vertices 
in the quotient are studied.  See text for acronyms of minimizers and minimal surfaces.
}
	\label{tab:summary}
\end{table}

The acronyms for the minimizing networks quoted in the fourth line of 
Table~\ref{tab:summary} are used by crystallographers, 
see~\cite{chemistrystructure} and also \href{http://www.rcsr.net}{rcsr.net}. 
We should note, however, that the lengths of our minimizers 
differ in some cases from the crystallographic standard representations
where edge lengths are chosen to coincide whenever possible.
Let us explain the acronyms.
In many cases they refer to a chemical compound: 
For the Steiner networks, \srs~stands for $\mathrm{SrSi_2}$ 
and \ths~for $\mathrm{ThSi_2}$. 
The diamond form of carbon explains \dia, 
and \cds~stands for $\mathrm{CdSO_4}$, 
while \bnn~denotes boron nitride nanotubes. 
Some other networks are named according to their lattice or geometry:
\pcu~denotes the primitive cubic unit, 
\sqp~denotes a network composed of square pyramids.
In the two-dimensional case, \sql\ relates to the square lattice
and~\hcb\ to the hexagonal or honeycomb network.
%(cf. Figure~\ref{fig:network5}).
\medskip

While originally our interest was solely in the case of Euclidean space~$n=3$,
we have come to study higher dimensions as well.
One reason is that the case of general dimension indicates
which features are open to a systematic study, and which others
seem only accessible to a case-by-case study.
Another reason is that some of our techniques 
are natural to state in arbitrary dimension~$n$.  
They apply to the case that
the degree $d$ is larger than the space dimension:
\pagebreak[4]

\begin{maintheorem}\label{thm:mainhigh}
  Irreducible $n$-periodic networks $N$ with degree $d\geq n+1$ cover
  graphs with one or two vertices in the quotient.
  For $d=n+1$ and degree $d=2n$ we determine the unique minimizers,
  in particular we have sharp estimates for the quotient $L^n/V$, 
  see \eqref{eq:dipole} and~\eqref{eq:dlargertwon}.
% for even $d>2n$ the value of $L^n/V$ is strictly larger than for $d=2n$.
\end{maintheorem}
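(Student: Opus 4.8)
I would split the proof into a combinatorial reduction followed by two optimisation problems. For the reduction, write $G=N/\Lambda$ for the quotient multigraph, with $v$ vertices of degree $d$ and $e=dv/2$ edges. Since $N$ is $n$-periodic the period homomorphism $H_1(G;\Z)\to\Lambda\cong\Z^n$ is onto, so the first Betti number $e-v+1=\tfrac12 v(d-2)+1$ is at least $n$; equivalently $v\ge 2(n-1)/(d-2)$, and for $d\ge n+1$ the right-hand side is at most $2$. Since this lower bound is realised by an actual network — the bouquet $B_n$ for $d=2n$, and the dipole $D_d$ (two vertices joined by $d$ parallel edges) for every $d\ge n+1$ — an irreducible network, one attaining the minimal $v$, has one or two vertices in the quotient; moreover its quotient graph is $B_n$ when $d=2n$ and some $D_{\ell,k}$ with $2\ell+k=n+1$ when $d=n+1$.

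For $d=2n$ the network is a single orbit of a vertex carrying $n$ loops with vectors $a_1,\dots,a_n\in\Lambda$; connectedness forces $\langle a_1,\dots,a_n\rangle=\Lambda$, so $V=\lvert\det(a_1,\dots,a_n)\rvert$ and $L=\sum_i\lvert a_i\rvert$. Hadamard's inequality together with the arithmetic--geometric mean inequality gives $V\le\prod_i\lvert a_i\rvert\le(L/n)^n$, i.e.\ the sharp bound $L^n/V\ge n^n$ of \eqref{eq:dlargertwon}; equality forces the $a_i$ to be pairwise orthogonal of a common length, so $\Lambda$ is a scaled integer lattice and the unique minimiser up to similarity is \pcu.

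For $d=n+1$ fix a graph $D_{\ell,k}$ with $2\ell+k=n+1$; write $a_1,\dots,a_\ell$ and $b_1,\dots,b_\ell$ for the loop vectors at the two vertices $p,q$ and $c_1,\dots,c_k$ for the vectors of the $p$--$q$ edges, so that $L=\sum\lvert a_i\rvert+\sum\lvert b_j\rvert+\sum_s\lvert c_s\rvert$ and $V=\lvert\det(a_1,\dots,a_\ell,b_1,\dots,b_\ell,c_2-c_1,\dots,c_k-c_1)\rvert$, and use the freedom of translating the vertex $q$ (which changes the $c_s$ by a common vector and leaves $\Lambda$ and $V$ fixed) to put the Fermat point of $c_1,\dots,c_k$ at the origin. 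In the dipole case $\ell=0$ (graph $D_{n+1}$) the problem becomes the sharp geometric inequality that among all $(n+1)$-tuples $v_0,\dots,v_n\in\R^n$ the quotient $\bigl(\sum_{i}\lvert v_i\rvert\bigr)^{n}/\lvert\det(v_1-v_0,\dots,v_n-v_0)\rvert$ is minimised — with value $n^{n/2}(n+1)^{(n-1)/2}$, equal to $12\sqrt3$ for $n=3$ — exactly when $v_0,\dots,v_n$ are the vertices of a regular simplex centred at the origin; this is \eqref{eq:dipole}, and the minimiser is the generalised diamond \dia. I would establish it by first proving existence of an optimal configuration (compactness, after normalising $V=1$ and using the translation freedom to keep the $v_i$ bounded), and then showing that an optimiser must be a centred regular simplex from the first-variation condition together with the $S_{n+1}$-symmetry of the functional.

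It remains to show that no $D_{\ell,k}$ with $\ell\ge1$ improves on the dipole value $n^{n/2}(n+1)^{(n-1)/2}$. When $k=1$ the single $p$--$q$ edge lies in no cycle, so $V$ does not depend on it and $L^n/V\ge(\sum\lvert a_i\rvert+\sum\lvert b_j\rvert)^n/\lvert\det(a_i,b_j)\rvert\ge n^n>n^{n/2}(n+1)^{(n-1)/2}$. When $k=2$ the optimal translation makes $\lvert c_1\rvert+\lvert c_2\rvert=\lvert c_2-c_1\rvert$, so $L$ is the sum of the norms of the $n$ lattice generators $\{a_i\}\cup\{b_j\}\cup\{c_2-c_1\}$ and again $L^n/V\ge n^n$. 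The delicate cases are $k\ge3$, which first occur for $n=4$ (the graph $D_{1,3}$): here the crude Hadamard estimate only gives $L^n/V\ge\tfrac{9}{16}n^n$ for $k=3$, which does not by itself beat the dipole value, so these finitely many graphs must be treated by a more careful analysis of the constrained minimisation of $L^n/V$. This last point, together with the equality case of the simplex inequality for the dipole, is where I expect the real work of the proof to lie; everything else is the Betti-number count and two applications of Hadamard's inequality.
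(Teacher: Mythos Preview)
Your combinatorial reduction via the Betti number and your treatment of $d=2n$ via Hadamard plus AM--GM match the paper essentially line for line. The real gap is the one you name yourself: for $d=n+1$, the double bouquets $D_{\ell,k}$ with $\ell\ge 1$ and $k\ge 3$. Your arguments for $k=1,2$ are correct, but ``a more careful analysis of the constrained minimisation'' is not a proof, and since the theorem is stated for every $n$ these are not finitely many cases to be dispatched by hand.

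The paper closes this gap uniformly in $k$ by the following reduction. Strip the $2\ell=n+1-k$ loop edges from $N$; the remaining edges form a subnetwork $N'$ covering the dipole $D_k$, which is $(k-1)$-periodic and lies in a $(k-1)$-dimensional affine subspace, with length $L'$ and $(k-1)$-volume~$V'$. Estimating $V\le V'\prod_i|e_i|$ and then optimising over the loop lengths by AM--GM yields
\[
  \frac{L^n}{V}\;\ge\;\frac{n^n}{(k-1)^{k-1}}\,\frac{(L')^{k-1}}{V'}\,.
\]
Now apply the simplex inequality \emph{in dimension $k-1$} to $N'$; this gives $L^n/V\ge n^n\sqrt{k^{k-2}(k-1)^{1-k}}$, and one checks that the right-hand side is strictly decreasing in $k$. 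Hence every $k<n+1$ is strictly beaten by the dipole $k=n+1$, with no case analysis. The point you are missing is that the dipole inequality, once proved in every dimension, feeds back into the mixed cases.

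A smaller issue: your proposed proof of the simplex inequality for the dipole itself (``first variation together with the $S_{n+1}$-symmetry'') is not yet an argument, since symmetry of the functional does not by itself force symmetry of the minimiser. The paper instead regards the simplex as a pyramid over any one of its facets, optimises the apex position explicitly for a fixed base (Lemma~\ref{lemma:pyramid}), and then observes that the equality conditions, applied with each vertex in turn taken as the apex, force all $|p_i|$ equal and all $\langle p_i,p_j\rangle$ equal---hence a regular simplex centred at the origin.
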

\noindent
More specifically, the quotient graphs are described 
in Proposition~\ref{prop:structure}:
For even $d\ge 2n$, they have just one vertex, and so are unique;
the remaining cases with $d\ge n+1$ have two vertices and 
there are $\lfloor d/2 \rfloor$ possible topological types.

\begin{figure}
	\centering
	\begin{tikzpicture}[>=stealth']
		\draw[->](1.8,1.75)--(1.8,5.5) node[above]{$n$};
		\draw[->](2.4,1.5)--(9.5,1.5) node[right]{$d$};
		\foreach\x in {3,...,9} {\draw(\x,1.5)--+(0,-.2);}
		\foreach\y in {2,...,5} {\draw(1.8,\y)--+(-.2,0);}
		\foreach\x in {3,...,8} {\foreach\y in {2,...,4} {\draw[gray](\x-.05,\y)--(\x+.05,\y);\draw[gray](\x,\y-.05)--(\x,\y+.05);}}
		\foreach\x in {3,...,8} {\node[below] at (\x,1.3) {$\x$};}
		\foreach\y in {2,...,4} {\node[left] at (1.6,\y) {$\y$};}
		\draw[dashed](3,2)--(5.75,4.75);
		\node[above right,align=center]at(5.5,4.75){$d=n+1$:\\$n$-simplex};
  	\draw[dotted](3,2)--(3,4.4);
		\node[above,align=center]at(3,4.4){$d=3$:\\Steiner\\networks};
		\draw[dashed](4,2)--(8.5,4.25);
		\node[right,align=center] at (8.5,4.25){$d=2n$:\\$n$-cube};
		\draw(8,4)node[smalldot]{};
		\draw(5,4)node[smalldot]{};
		\draw(3,3)node[smalldot]{}node[left,align=left]{\srs\\[-.3\baselineskip]\ths};
		\draw(3,2)node[smalldot]{}node[left]{\hcb};
		\draw(4,2)node[smalldot]{}node[left]{\sql};
		\draw(4,3)node[smalldot]{}node[left,align=left]{\dia\\[-.3\baselineskip]\cds};
		\draw(5,3)node[smalldot]{}node[left,align=left]{\bnn\\[-.3\baselineskip]\sqp};
		\draw(6,3)node[smalldot]{}node[left]{\pcu};
    \node[region]  at (4.3,4.5) {I};
    \node[region]  at (5.8,3.6) {II};
    \node[region]  at (7.5,2.6) {III};
	\end{tikzpicture}
	\caption{Minimizers determined in the present paper are indicated with black dots;
for dimension $n=3$ these are described in Table~\ref{tab:summary}. 
The two dashed lines denote results for families of minimizers.
For networks within regions II and III (but not for those in~I) 
we have results on the network topology and 
estimates of the length quotient.}
	\label{fig:casediagram}
\end{figure}
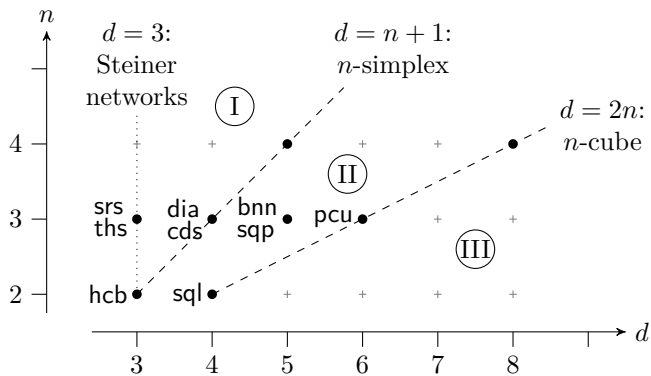

Figure~\ref{fig:casediagram} shows how the results for general $n$
relate to the ones for $n=3$. % stated in Theorem~\ref{thm:mainlow}:
The minimizing simplicial networks with $d=n+1$ 
generalize the diamond or the hexagonal planar network 
to arbitrary dimension, and the primitive cubical networks
minimizing for $d=2n$ generalize 
the primitive network in $3$-space or the planar square lattice.

For the region with $d\leq n$ marked with I in Figure~\ref{fig:casediagram},
the network quotients have more than~$2$ vertices,
see~\eqref{eq:numbervertices}.
% By formula, \# vertices \ge 2n-2 / n-2 > 2n-4 / n-2 = 2
As shown in \cite{periodicsteiner},
for the Steiner case $d=3$ the quotient graph has at least $2n-2$ vertices.
It is known that the number of topologically different graphs with 
$2n-2$ vertices rapidly increases with~$n$. 
% even though the quotients are known not to contain loops 
% nor multiple edges in this case. 
Thus we do not expect a good systematic theory for the case $d\leq n$.

While we do not offer a characterization of the minimizers 
with $d$ between $n+2$ and $2n-1$
corresponding to the region II of the figure, 
Theorem~\ref{thm:highdeg} implies that for each such~$n$ the corresponding 
minimizers have a length larger than the simplicial networks 
generalizing the diamond. 
Similar remarks apply to region~III with $d>2n+1$:
Here the primitive cubic network gives rise to a lower estimate,
see Theorem~\ref{th:pcu}.  That is, in regions II and~III
of the figure, the length quotient is estimated strictly by the 
minimizers represented by the dashed lines to their left.
\medskip
\pagebreak[4]

Having explained our main results, let us describe how
the paper is organized. After making precise our notation 
in Section~\ref{sec:defs} we identify the possible topologies 
of minimizers in Section~\ref{sec:topology}.
The simplex case $d=n+1$ and the primitive cubic case $d=2n$
are dealt with in Section~\ref{sec:simplex} and~\ref{sec:cube}, respectively.
The remaining two sections serve to complete the picture 
for Euclidean space $\R^3$:  In Section \ref{sec:degfour} 
we determine the minimizers for the two topological types 
which arise for irreducible networks with $d=4$, 
and in Section~\ref{sec:degfive}.
we analyse the much more involved case $d=5$ with its two
different irreducible topologies.
\medskip

We would like to comment on the significance of networks to surface theory
which motivated the present study.
Triply periodic embedded minimal surfaces were first constructed by Schwarz
and his students.
In 1970, Alan Schoen used networks, which he called skeletal graphs, 
in order to suggest further candidates for such surfaces; 
rigorous existence proofs were later obtained by Karcher~\cite{Karcher1989}.
Networks and oriented minimal surfaces have the same symmetry group,
and the connected component to the side of the minimal surface 
containing the network can be retracted to it, subject to the symmetry group.
In fact there are two networks, to either side of the embedded surface;
the networks are not necessarily congruent.
In Table~\ref{tab:summary} we include the relevant 
minimal surfaces in Schoen's terminology, 
namely the Schwarz $P$, $D$, and $CLP$ surface, as well as
Schoen's gyroid and $H'-T$.  We are not aware of minimal 
surfaces with the remaining two graphs \ths\ and \sqp.
We would like to add that also 
constant mean curvature surfaces were constructed 
in terms of networks by Kapouleas~\cite{Kapouleas1990} 
and recently by Traizet~\cite{Traizet2017}.

Nevertheless, in general there is no well-defined relationship between
such triply periodic surfaces and networks.
An attempt to define graphs for arbitrary minimal or
constant mean curvature Alexandrov embedded surfaces 
(not necessarily periodic) is due to Kusner~\cite{Kusner1991}:
He defines straight lines in terms of loop integrals 
which are well-defined on the first homology of the surface.
However, only in symmetric cases will these lines meet 
in vertices and thereby define edges of a network.
% That is, in general there is no canonical way
% to associate a network to given triply periodic surfaces.

Numerical experiments made us aware of another possible approach to
produce the networks, at least for very symmetric cases~\cite{kgbroyalsociety}.
Suppose that for fixed lattice~$\Lambda$ and constant $C>0$
there is a periodic embedded surface $\Sigma\subset\R^3$
minimizing the Willmore energy $\int H^2\,dA$ in $\R^3/\Lambda$,
under the constraint that $\Sigma$ bounds a component $\Omega\subset\R^3$
with enclosed volume $C:=V(\Omega/\Lambda)>0$.  
Typically, $\Sigma$ is a triply periodic minimal surface, 
and due to symmetries the volume $C$ is half the volume of 
the fundamental domain.
Experiments with Brakke's Surface Evolver indicate
that a continuous deformation family $c\mapsto\Sigma_c$ exists for
$c\in (0,C]$, with initial surface $\Sigma_C=\Sigma$
and $c$ the volume of the component $\Omega_c/\Lambda$ deforming~$\Omega/\Lambda$.
In many, but not all of the cases we investigated, a network~$N$
arises as the singular limit $\lim_{c\to 0}\Sigma_c$ 
w.r.t.\ Hausdorff distance; the Willmore energy tends to infinity. %
Geometrically, the surfaces $\Sigma_c$ can be described as 
thin cylindrical tubes around the network.
\medskip

Let us conclude the introduction by mentioning some open problems.
We conjecture that minimizers for given $n,d$ are always irreducible.
It would be interesting to gather information about the networks with $d\leq n$,
corresponding to region I in Figure~\ref{fig:casediagram}.
Also, we would expect that for given $n$ the length quotient
$L^n/V$ is monotone in $d\geq n$; 
we know this holds when restricted to $d\geq2n$ even, see 
Remark~\ref{rem:evenmonotone}.
Finally, our assumption that the degree coincides at all vertices could be relaxed.  
\pagebreak[4]

%%%%%%%%%%%%%%%%%%%%%%%%%%%%%%%%%%%%%%%%%%%%%%%%%%%%%%%%%%%%%%%%%
\section{Periodic networks of fixed degree}\label{sec:defs}

We define networks for fixed degree similar to \cite{periodicsteiner}.
\begin{definition}
	An \emph{$n$-periodic network $N$ of degree $d$} is a connected simple graph, 
  immersed with straight edges of positive length into $\R^n$, 
  where $n\ge 2$, subject to the following conditions:
	\begin{itemize}
		\item All vertices have the same degree $d\ge 3$.
		\item $N$ is invariant under the action of a lattice $\Lambda$ of rank $n$.
		\item The quotient $\Gamma:=N/\Lambda$ is a finite graph, possibly with loops and multiple edges.
	\end{itemize}
	We call $V = V(\R^n/\Lambda)$ % = \vert\R^n/\Lambda\vert$ 
  the \emph{(spanned) volume} of $N$ and $L = L(N/\Lambda)$ its \emph{length}.
\end{definition}

Recall that a \emph{lattice} of rank $n$ is a set 
$\Lambda=\{\sum_{i=1}^n a_ig_i : a_i\in\mathbb Z\}\subset\R^n$, 
where the vectors $g_1,\dots,g_n\in\R^n$ are linearly independent.
A network is \emph{immersed} if the star of each vertex is embedded.
Here the \emph{star} of a vertex~$p$, denoted~$\Star p$,
is the union of the edges from~$p$ to its incident vertices.
Clearly, the immersion condition implies simplicity of the network.
% Another sensible way to define immersion would be to say that
% the edge directions of the star at each vertex are distinct.

% The ambient space quotient $\R^n/\Lambda$ is an $n$-dimensional flat torus.  
% It can be represented by a parallelepiped spanned by the vectors~$g_i$. 
% We refer to the quotient or to its representing epiped 
% as a \emph{fundamental domain}.
If an abstract finite graph~$\Gamma$ with vertices of degree~$d$ is given, 
then our networks can be described as immersions
of certain abelian coverings of~$\Gamma$; 
see Sunada~\cite{sunada2012topological} for the covering theory of graphs. 
Note, however, that an immersed network can have a non-immersed quotient: 
For instance, in $\R^2$ we consider the network~$N$ of degree~$6$ 
which is the $\Z^2$-orbit of the edges from the origin 
to $(0,1)$, $(1,0)$ and~$(2,1)$.  Then the star of a vertex
has a self-intersection when taken in $\R^n/\Z^2$, but not in $\R^n$.
% However, embeddedness of the entire network $N$ is equivalent 
% to embeddedness of the entire quotient $N/\Lambda$.

We are interested in networks which are optimal in the sense 
that the length $L$ of the quotient network $N/\Lambda$ is minimal. 
As in \cite{periodicsteiner} we minimize~$L$ 
subject to the constraint that the $n$-dimensional volume $V=V(\R^n/\Lambda)$ 
of a fundamental domain is fixed to~$1$. 
Note that the space of lattices subject to this constraint is non-compact.
Equivalently, we minimize the scaling-invariant \emph{length quotient}~$L^n/V$.

Let us state a well-known necessary condition for a network to be a minimizer. 
Consider a vertex~$p$ of a network~$N$ with $d$~neighbours $\{q_1,\dots,q_d\}$.
% (different from~$p$).
\begin{definition}
  The \emph{total force at~$p$} of a network $N$, 
  exerted by the %directions of the 
  $d$ edges incident to $p$, is the vector
\begin{align}
	F(p):=\sum_{i=1}^d\frac{p-q_i}{\vert p-q_i\vert}\,.\label{eq:equilibrium}
\end{align}
	A network $N$ is \emph{balanced} if and only if $F(p)=0$ holds at each vertex $p\in N$.
\end{definition}

\noindent  
The length of the star gives rise to the convex function
$
  p\mapsto L(\Star p) := \sum_{i=1}^d\vert p - q_i\vert
$,
and so for given $q_i$ the function $L$ attains a minimum at a unique 
critical point~$p$.
% Note $L\to\infty$ for $p\to\infty$.
We say $p$ is \emph{critical} for $L(\Star p)$ if 
\begin{align}
 0=\frac{\mathrm d}{\mathrm dt}L\bigl(\Star (p+tv)\bigr)\Big|_{t=0} 
	= \Bigl\langle\sum_{i=1}^d\frac{p-q_i}{\vert p-q_i\vert},v\Bigr\rangle
  \qquad\text{for all }v\in\R^n. 
\end{align}
Clearly length criticality is equivalent to force balancing:
\begin{proposition}\label{prop:equilibrium}
  A network $N$ is balanced if and only if each vertex $p\in N$
  is critical for $L(\Star p)$.
\end{proposition}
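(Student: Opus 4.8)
The plan is to observe that this is essentially a restatement of the definitions already in place, so the proof should be a short chain of equivalences carried out vertex by vertex. First I would fix an arbitrary vertex $p\in N$ with neighbours $q_1,\dots,q_d$. The key computation is the differentiation already displayed just above the statement: for any $v\in\R^n$,
\[
  \frac{\mathrm d}{\mathrm dt}L\bigl(\Star(p+tv)\bigr)\Big|_{t=0}
  = \sum_{i=1}^d \frac{\mathrm d}{\mathrm dt}\,\bigl\lvert p+tv-q_i\bigr\rvert\Big|_{t=0}
  = \Bigl\langle \sum_{i=1}^d \frac{p-q_i}{\lvert p-q_i\rvert},\, v\Bigr\rangle
  = \langle F(p), v\rangle,
\]
where the term-by-term differentiation is legitimate because each edge has positive length, so $t\mapsto \lvert p+tv-q_i\rvert$ is smooth near $t=0$, and $\frac{\mathrm d}{\mathrm dt}\lvert w+tv\rvert|_{t=0} = \langle w,v\rangle/\lvert w\rvert$ for $w\neq 0$.

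Next I would extract the equivalence at the single vertex $p$: the vertex $p$ is critical for $L(\Star p)$ iff $\langle F(p),v\rangle = 0$ for all $v\in\R^n$, and since the inner product is nondegenerate this holds iff $F(p)=0$. Finally I would quantify over vertices: $N$ is balanced iff $F(p)=0$ at every vertex $p$, which by the previous line is iff every vertex is critical for its star length. One should note that it suffices to check one representative vertex per $\Lambda$-orbit, since $F$ is equivariant under the lattice action (translations are isometries), but this is a minor remark rather than a step. If one wishes, one can also recall the convexity of $p\mapsto L(\Star p)$ mentioned in the excerpt to add that ``critical'' may be upgraded to ``the unique minimizer,'' though this is not needed for the stated equivalence.

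I do not expect a genuine obstacle here: the only point requiring a word of care is the differentiation under the sum at $t=0$, which relies on the standing assumption that all edges have positive length so that no $q_i$ coincides with $p$; away from that the statement is a formal consequence of the definition of $F(p)$ in \eqref{eq:equilibrium} and the definition of criticality. The whole argument is two or three lines.
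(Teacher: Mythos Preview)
Your proposal is correct and matches the paper's approach exactly: the paper treats the proposition as an immediate consequence of the displayed derivative computation and the nondegeneracy of the inner product, stating only ``Clearly length criticality is equivalent to force balancing'' without a formal proof. Your write-up simply makes explicit the two-line argument the paper leaves implicit.
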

\noindent 
In particular, all vertices of a minimizer for $L^n/V$ are balanced.
We want to analyse networks with the simplest topology:
\begin{definition}
We call an $n$-periodic network $N$ of degree~$d$ \emph{irreducible} if its quotient $N/\Lambda$ has the least number of vertices possible for a balanced network of degree~$d$ in~$\R^n$.
\end{definition}
Irreducibility can be related to the \emph{circuit rank} 
of the connected graph~$N/\Lambda$, 
% (or in fact its topological graph $\Gamma$)
\[ \rank N := 1 - \#\text{vertices in $(N/\Lambda)$}+\#\text{edges in $(N/\Lambda)$}\,.\]
For connected finite graphs the rank is precisely the number of generators 
of $H_1(N/\Lambda,\mathbb Z)$, and so for $N$ to be $n$-periodic 
we must have $\rank N\ge n$.
To see this, consider a spanning tree $T\subset N/\Lambda$ 
of the quotient graph.
Then $H_1(T,\mathbb Z)$ is trivial, and reinserting edges one by one increases the rank of~$T$ as well as the number of generating cycles in~$T$ by $1$ each. 

We want to classify the topology of irreducible networks.
An $n$-periodic network $N$ of degree $d$ must have
$2\cdot\# \text{edges} = d\cdot\#\text{vertices}$ and so
$\rank N = 1 + \bigl(\frac d2-1\bigr)\cdot\#\text{vertices}$.
Therefore an $n$-periodic network of degree $d$ satisfies
\begin{equation}\label{eq:numbervertices}
  \#\text{vertices} \ge \frac{2n-2}{d-2};
\end{equation}
in particular, a network with degree $d< 2n$ has at least two vertices,
and a network with $d<n+1$ at least three.

\begin{remark}
  For the Steiner case, $d=3$, an $n$-periodic network $N$ is irreducible 
  if and only if $\rank N = n$.   
  Indeed, a balanced network with one vertex and $n$~loops in the quotient
  can be split into a balanced Steiner network on $2n-2$~vertices, 
  see~\cite[Theorem 2.3]{periodicsteiner}.
	For $d>3$, however, $\rank N$ can be larger than~$n$:
  Proposition~\ref{prop:structure} below gives an irreducible
  example with $d=5$, $n=3$ on $2$~vertices, so that the rank is~$4$.
  % \rank D_{1,3} = 1 + 3/2 * 2 = 4,   \rank D_5 = same
  % general d \ge n+1 odd: \rank = 1 +(d/2 -1)*2 = 1+d-2 = d-1 \ge n
\end{remark}

%%%%%%%%%%%%%%%%%%%%%%%%%%%%%%%%%%%%%%%%%%%%%%%%%%%%%%%%%%%%%%%%%%
\section{Topology of irreducible \texorpdfstring{$n$}{n}-periodic networks}
\label{sec:topology}

One or two vertices are clearly the simplest case %which can arise
for the topology of a quotient graph.
Our goal is to show that no more vertices are needed
for networks of degree $d>n$ to be irreducible.
We start by introducing connected multigraphs with one or two vertices,
see~Figure~\ref{fig:structure}:

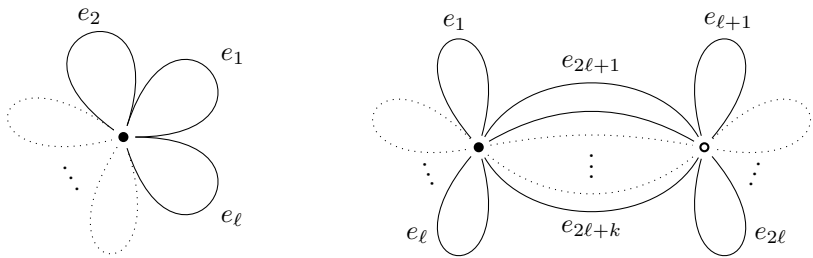
\begin{figure}[b]
	\centering
	\begin{subfigure}[t]{.38\linewidth}
		\centering
		\begin{tikzpicture}[scale=3]
			\coordinate[dot1](p) at (0,0);
			\draw(p) to [out=0,in=72,looseness=30] node[above right]{$e_1$} (p);
			\draw(p) to [out=72,in=144,looseness=30] node[above]{$e_2$}(p);
			\draw[dotted](p) to [out=144,in=194,looseness=40] (p);
			\draw[dotted](p) to [out=238,in=288,looseness=40] (p);
			\node[rotate=-60] at (220:.3) {$\cdots$};
			\draw(p) to [out=288,in=0,looseness=30] node[below right]{$e_{\ell}$} (p);
		\end{tikzpicture}
	\end{subfigure}
	\begin{subfigure}[t]{.6\linewidth}
		\begin{tikzpicture}[scale=3]
			\coordinate[dot1](p) at (0,0);
			\coordinate[dot2](q) at (1,0);
			\draw(p) to [out=60,in=120] node[above]{$e_{2\ell+1}$}  (q);
			\draw(p) to [out=30,in=150] (q);
			\node at (.5,-.05) {$\vdots$};
			\draw[dotted](p) to [out=10,in=170] (q);
			\draw[dotted](p) to [out=-40,in=220] (q);
			\draw(p) to [out=-60,in=240] node[below]{$e_{2\ell+k}$}(q);
			\draw(p) to [out=130,in=75,looseness=35] node[above]{$e_1$} (p);
			\draw[dotted](p) to [out=135,in=190,looseness=35] (p);
			\node[rotate= 20] at ($(p)+(200:.25)$) {$\vdots$};
			\draw(p) to [out=-75,in=-130,looseness=35] node[left,pos=.7]{$e_\ell$}(p);
			\draw(q) to [out=50,in=105,looseness=35] node[above]{$e_{\ell+1}$} (q);
			\draw[dotted](q) to [out=45,in=-10,looseness=35] (q);
			\node[rotate=-20] at ($(q)+(-20:.25)$) {$\vdots$};
			\draw(q) to [out=-105,in=-50,looseness=35] node[right,pos=.7]{$e_{2\ell}$} (q);
		\end{tikzpicture}
	\end{subfigure}\vspace*{-.7cm}
	\caption{The bouquet graph $B_\ell$ (left) and 
the double bouquet graph $D_{\ell,k}$ (right).
The latter is obtained by joining two copies of $B_\ell$ with $k$~edges.
As stated in Proposition~\ref{prop:structure},
irreducible networks of degree~$d>n$ cover one of these graphs.}
	\label{fig:structure}
\end{figure}
\begin{itemize}
	\item The \emph{bouquet graph} $B_\ell$ of degree~$2\ell$
    consists of one vertex with $\ell\geq0$ loops.
	\item The \emph{double bouquet graph} $D_{\ell,k}$
   has degree~$2\ell+k$ and consists of the union of two bouquet graphs 
   $B_\ell$ with $\ell\ge 0$, connected with $k\geq1$ edges. 
   Specifically, we call $D_k:=D_{0,k}$ the \emph{dipole graph} of degree~$k$.
\end{itemize}

We begin with an existence statement.
%\pagebreak[4]

\begin{lemma}\label{lemma:structure}
	Let $n\geq2$ and $\Lambda\subset\R^n$ be a lattice.
  There exist balanced $n$-periodic networks of degree~$d$
	\begin{enumerate}
		\item\label{item:structureclaimtwovertices} for~$d\geq n+1$ 
      such that the quotient is a double bouquet graph, and moreover
		\item\label{item:structureclaimonevertex} for even $d\geq2n$ 
      such that the quotient is the bouquet graph $B_{d/2}$.
	\end{enumerate}
\end{lemma}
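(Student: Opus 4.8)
The plan is to construct the required networks explicitly, by placing one or two vertices in the fundamental domain and choosing the edge vectors so that the force balancing condition \eqref{eq:equilibrium} is met and so that the edges generate the lattice $\Lambda$ up to finite index. The basic observation is that force balancing is a condition purely on the \emph{set of unit vectors} along the edges leaving a vertex: if we leave a vertex $p$ along direction vectors $u_1,\dots,u_d$ with $\sum_i u_i = 0$, then $p$ is balanced regardless of edge lengths, by Proposition~\ref{prop:equilibrium}. So the task splits into a combinatorial-geometric part (find a balanced configuration of directions with the right incidence pattern, i.e.\ realising a bouquet or double bouquet) and a lattice part (scale/shear so the translational structure is exactly $\Lambda$).

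For part~\ref{item:structureclaimonevertex}, the even case $d=2n$ is the cleanest: take a single vertex, and for $\Lambda = \Z^n$ use the $n$ loops given by the standard basis vectors $e_1,\dots,e_n$; each loop contributes edge directions $+e_j$ and $-e_j$ at the vertex, so the total force is $\sum_j (e_j - e_j) = 0$ and the degree is $2n$. For a general lattice $\Lambda$ with basis $g_1,\dots,g_n$, apply the linear map sending $e_j\mapsto g_j$; linear maps preserve the balancing condition only up to the fact that they do not preserve unit vectors, so instead one should directly use the loops along $g_1,\dots,g_n$, whose contributions $g_j/|g_j| - g_j/|g_j|$ again cancel. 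To get degree $2\ell$ with $\ell > n$ (the general even $d\ge 2n$), add further loops along vectors $g_{n+1},\dots,g_\ell\in\Lambda$ chosen so that no two are parallel (ensuring the graph stays simple after passing to $\R^n$, in the sense required) — each added loop is self-balancing, so the network remains balanced. This realises $B_{d/2}$.

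For part~\ref{item:structureclaimtwovertices}, with two vertices $p$ and $q$ and $d\ge n+1$, write $d = 2\ell + k$ with $k\ge 1$ the number of $p$--$q$ edges and $\ell$ loops at each vertex; any such splitting with $1\le k\le d$ works as a target topology, so we are free to choose one, say $k$ as small as possible. Place $p$ at the origin and $q$ at a suitable point $v$; the $k$ connecting edges should be given as translates $v + \lambda_1,\dots,v+\lambda_k$ with $\lambda_i\in\Lambda$, so that from $p$ they leave along unit vectors $w_i = (v+\lambda_i)/|v+\lambda_i|$ and from $q$ along $-w_i'$ for the matching incoming edges; the loops at each vertex are again self-balancing. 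The force at $p$ is $\sum_{i=1}^k w_i$, and we must choose $v$ and the $\lambda_i$ so this vanishes (and symmetrically at $q$) while the $\lambda_i$ together with the loop vectors span $\Lambda$ — here a symmetric choice, e.g.\ connecting vectors coming in $\pm$ pairs when $k$ is even, plus a single ``central'' direction handled by symmetry when $k$ is odd, makes the cancellation automatic. The condition $d\ge n+1$ is exactly what guarantees enough edges (counting the $k$ connectors plus $2\ell$ loop-endpoints, i.e.\ enough free direction vectors) to both span an $n$-dimensional lattice and still arrange a vanishing sum.

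The main obstacle I expect is \emph{simultaneously} meeting the three demands: (a) the edge directions sum to zero at each vertex, (b) the edge vectors generate $\Lambda$ (not just a sublattice of full rank — or if only full-rank is achieved, one passes to the generated sublattice and notes it is still a lattice of rank $n$, which suffices since the statement fixes the lattice only up to this), and (c) the network is simple/immersed. Requirement (c) forces the chosen direction vectors to be pairwise non-parallel and forces $k\ge 1$ genuine connecting edges of positive length; requirement (a) is a single linear equation on the configuration and is easy to satisfy with a symmetric ansatz, but reconciling it with (b) for an \emph{arbitrary} lattice $\Lambda$ is where care is needed — the clean way is to first solve everything for the integer lattice $\Z^n$ (or a convenient rational configuration) and then transport by the unique linear isomorphism $\Z^n \to \Lambda$, checking that this isomorphism can be chosen to keep the balancing ansatz's symmetry intact, or else re-deriving the vanishing of the force sum directly in the image configuration.
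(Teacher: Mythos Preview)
Your outline for~\ref{item:structureclaimonevertex} is essentially the paper's argument: loops along a lattice basis $g_1,\dots,g_n$ are each self-balancing, and further nonparallel lattice vectors bring the degree up to~$d$.

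For~\ref{item:structureclaimtwovertices} there is a genuine gap in the odd-degree case. You correctly reduce the problem: loops self-balance, so one only needs $k$ bridges from $p=0$ to translates $q+\lambda_1,\dots,q+\lambda_k$ ($\lambda_i\in\Lambda$) whose unit directions at $p$ sum to zero. But ``a single central direction handled by symmetry when $k$ is odd'' is not a construction. Odd $d$ forces odd $k$, and $k=1$ is impossible (a lone bridge direction cannot be cancelled by loops), so you need $k\ge 3$ bridges whose directions sum to zero \emph{for an arbitrary given lattice}~$\Lambda$. Your $\pm$-pair idea does not help here: two bridges to $q+\lambda$ and $q+\lambda'$ are antipodal through $p$ only if $q$ lies on the line they span, so several distinct antipodal pairs would force $q=p$. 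Nor does transporting a symmetric $\Z^n$ configuration work, since (as you yourself note) linear maps do not preserve the unit-vector balancing condition; your fallback ``re-deriving the vanishing \dots\ directly in the image configuration'' simply restates the problem.

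The paper's missing ingredient is the Fermat point. For odd $d$ it takes $k=3$: choose generators $g_1,g_2$ of $\Lambda$ enclosing an angle in $[\pi/3,\pi/2]$ (always achievable after rechoosing the basis), so that the triangle $0,g_1,g_2$ has its Fermat point $q$ in the interior; the three segments from $q$ to the triangle's vertices meet at $120^\circ$ and are balanced. These three bridges lie in the plane of $g_1,g_2$, and the remaining $d-3$ edges are self-balancing loops along $g_3,\dots,g_n$ and additional lattice vectors. For even $d$ the paper takes $k=2$ with $q$ an interior point of the segment $[0,g_1]$, which is the clean realization of your single $\pm$-pair.
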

\begin{proof}
	We distinguish three cases to construct the graphs; compare with Figure~\ref{fig:structurecases}.
	\begin{caselist}
	  \item[\emph{(}ii\emph{)}:]
Suppose $d$ is even and $d\geq2n$. 
To define $N$ pick first a point~$p\in\R^n$ and connect it 
with points of $p+(\Lambda\setminus\{0\})$ with edges as follows.
Choose $n$~vectors generating the lattice~$\Lambda$, 
and use them to define a set of $n\leq d/2$ edges.
Supplement this edge set in a way that the resulting set of $d/2$ edges 
does not contain any pair of opposite edges.
% parallel edges, in particular no opposite ones.
Then take the $\Lambda$-orbit of this edge set 
to define a network~$N$ of degree~$d$, which is balanced and has rank~$n$; 
moreover, the star of~$p$ is embedded, implying that $N$ is immersed.
Observe the quotient graph $N/\Lambda$ is topologically~$B_{d/2}$.%
\begin{figure}[b]
	\centering
	\begin{subfigure}{.3\linewidth}\centering
		\includegraphics[width=.8\linewidth]{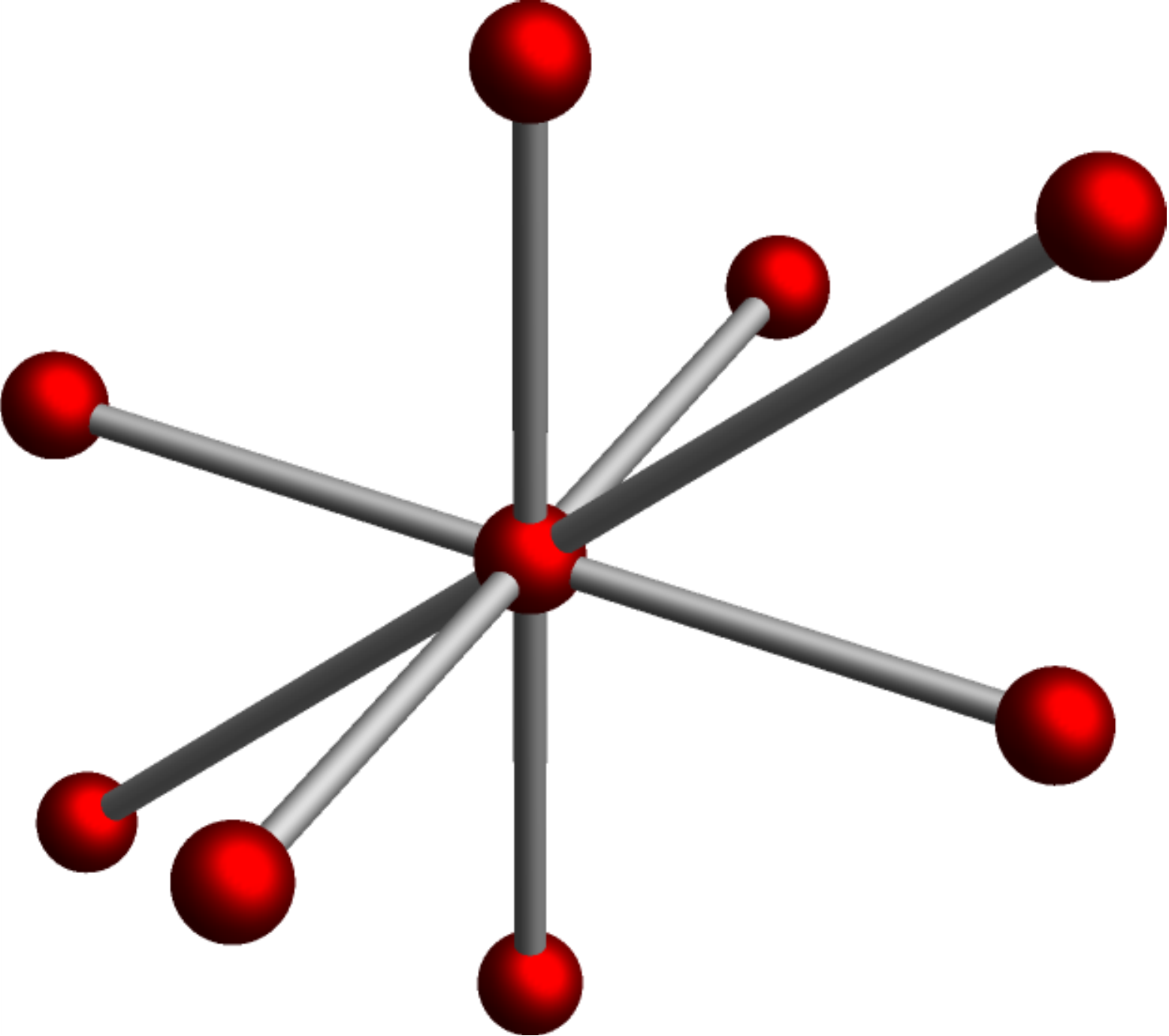}
	\end{subfigure}
	\begin{subfigure}{.29\linewidth}\centering
		\includegraphics[width=.7\linewidth]{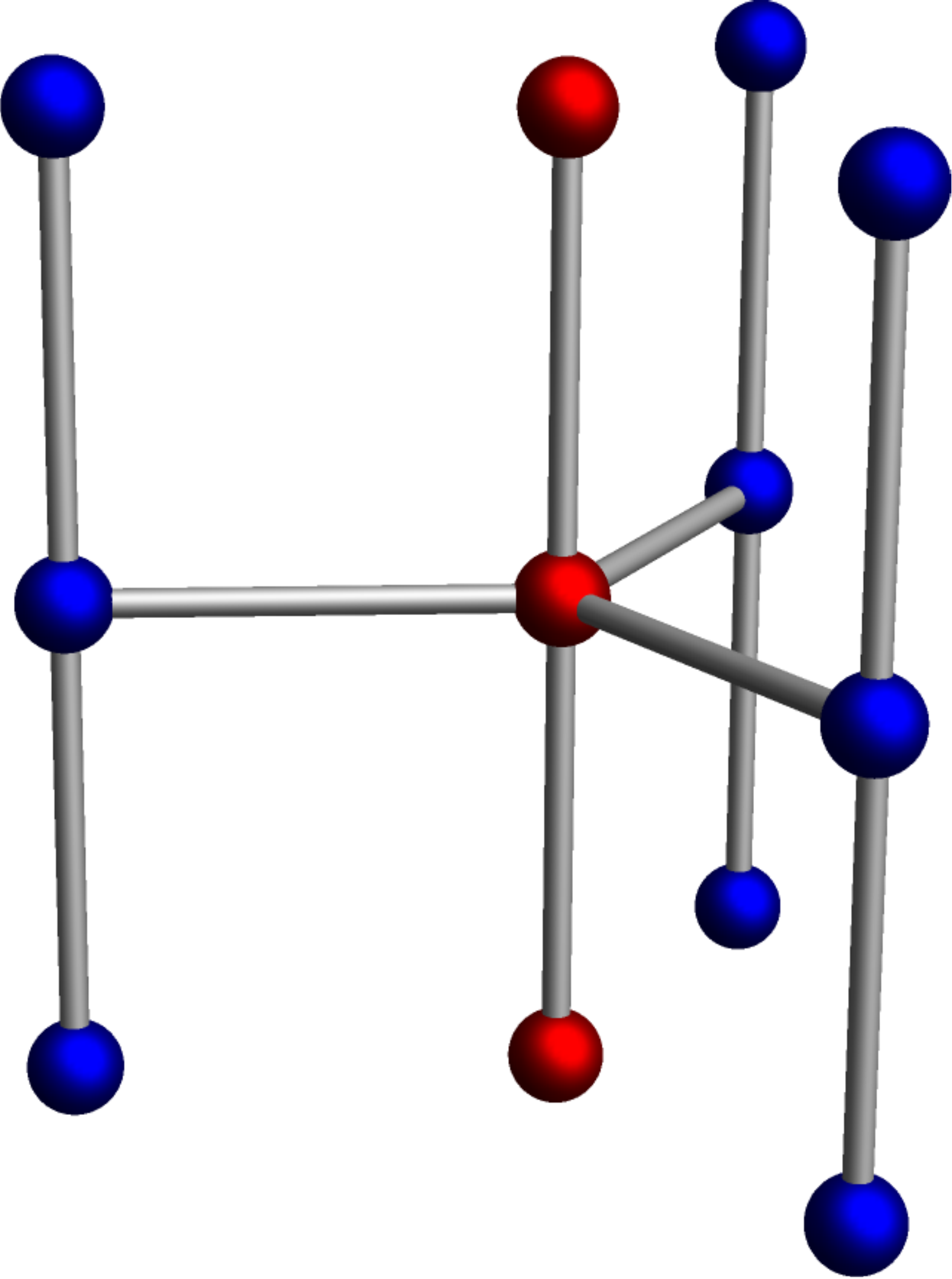}
	\end{subfigure}
	\begin{subfigure}{.34\linewidth}\centering
		\includegraphics[width=\linewidth]{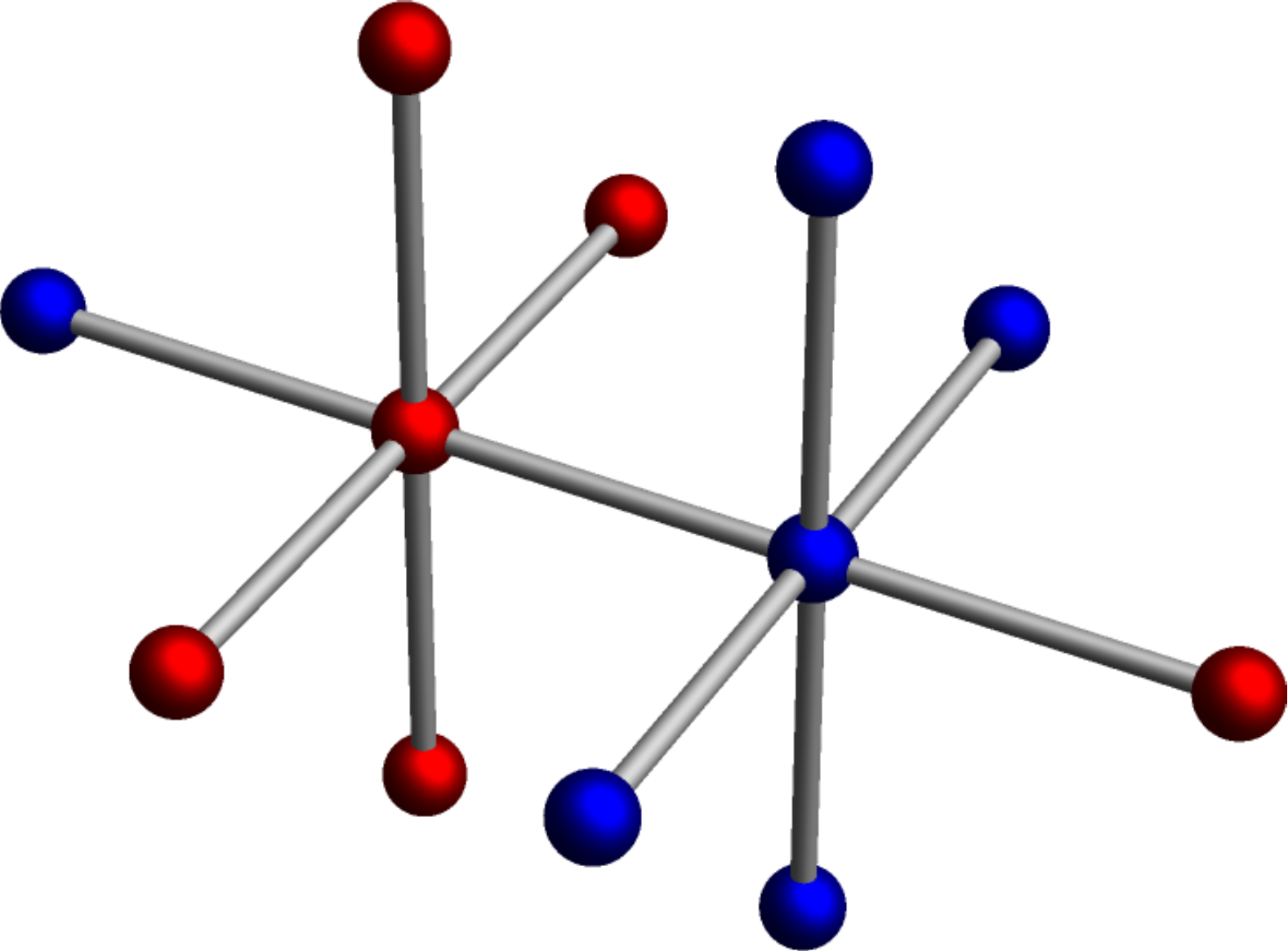}
	\end{subfigure}
	\caption{Construction of balanced $n$-periodic networks with prescribed 
  degree.  The figures correspond to the three cases in the proof 
  of Lemma~\ref{lemma:structure} for $n=3$:
  They show degree $d=8$, $d=5$, $d=6$
  and give rise to quotient graphs $B_4$, $D_{1,3}$, $D_{2,2}$, respectively.
}
	\label{fig:structurecases}
\end{figure}
	\item[\emph{(}i\emph{)}, case $d\geq n+1$ odd:]
We construct a network of degree~$d$ with quotient $D_{(d-3)/2,\, 3}$.
The network turns out to be a generalization of the \bnn\ network,
see Figure~\ref{fig:network5}.
Suppose $\Lambda$ is generated by $g_1,\ldots,g_n$.
Let $P$ be the plane in $\R^n$ spanned by $g_1$ and~$g_2$.

In a first step we
construct a balanced network of degree~$3$ in $P$ with quotient $D_3$.
We may assume the two generators $g_1,g_2$ 
of $\Lambda_0:=\Lambda\cap P$ are chosen to enclose an angle in $[\pi/3,\pi/2]$.
% by the classification of 2-dim lattices
Then the triangle with vertices $0,g_1,g_2$ % in~$P$ 
contains a Fermat point~$q$ in its interior, and so
the three edges connecting $q$ to $0$, $g_1$, $g_2$ are balanced at~$q$.
Let $N_2$ be the $\Lambda_0$-orbit of this tripod, 
which is an embedded network of degree~$3$ with topology~$D_3$.
Note that $N_2$ is balanced, since a network with two vertices 
in the quotient is balanced on both vertices if it is balanced at one vertex.

The second step is a product construction similar to the above 
proof of part \ref{item:structureclaimonevertex}.
Connect the vertex $0\in\Lambda$ with $(d-3)/2$ vertices in 
$\Lambda\setminus\Lambda_0$, and also the other vertex~$q$ 
with $(d-3)/2$ vertices in $q+(\Lambda\setminus\Lambda_0)$,
such that the resulting $d-3$ edges extend the 
set of $n-2\le d-3$ vectors $g_3,\ldots,g_n$, 
and such that neither at $0$ nor at~$q$ there is a pair of opposite edges.
The $\Lambda$-orbit $N$ of these edges
is an immersed network of degree~$d$.  
Note that $N$ is connected, as each lattice generator is represented by an edge.
%Then $\Lambda$ is the lattice of $N$.
All edges incident to a point either have direction in $P$ or 
occur in opposite pairs and so $N$ is balanced.
	\item[\emph{(}i\emph{)}, case $d\geq n+1$ even:] 
We construct a network $N$ of degree~$d$ with quotient $D_{(d/2)-1,\,2}$. 
Pick a generator $g_1$ of~$\Lambda$ and consider the edge $e$ from $0$ to~$g_1$.
Then choose $q$ in the interior of~$e$ 
and proceed as in the second step of the odd case: 
Connect each of $0,q$ to a point in $\Lambda\setminus\Z g_1$ 
with $\frac d2-1$ edges, this time including 
the directions of the remaining $n-1$ lattice vectors 
into the total edge set (thereby using the assumption $d-2\ge n-1$).
Again the $\Lambda$-orbit $N$ of this edge set satisfies all requirements.
Note that this construction agrees with the one for \ref{item:structureclaimonevertex}
if $q$ were~$0$.
	\end{caselist}%
\vspace{-1mm}

\end{proof}

If an $n$-periodic network in $\R^n$ has degree~$3$ 
it must have a quotient with at least $2n-2$ vertices.
Thus the topology becomes more complex with increasing 
dimension~$n$.
In contrast, for sufficiently high degree $d$ the lemma implies
that irreducible networks have a simple topology:
\begin{proposition}\label{prop:structure}
	Let $N$ be an irreducible $n$-periodic network of degree $d\geq n+1$.
	If $d$ is even and $d\geq2n$, then $N$ covers $B_{d/2}$.
	For all other $d\geq n+1$, the network~$N$ covers one of the 
  graphs $D_{\ell,k}$ where $2\leq k\leq d$ and %$\ell=(d-k)/2$.
  $d-k=2\ell$.
\end{proposition}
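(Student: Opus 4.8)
The plan is to combine the existence statement of Lemma~\ref{lemma:structure} with an analysis of how the loops of a quotient graph lift to the network, and then to use force balance (Proposition~\ref{prop:equilibrium}) to exclude the smaller possibilities. First I would record the upper bound on the number of vertices: by part~\ref{item:structureclaimtwovertices} of Lemma~\ref{lemma:structure}, for every $d\ge n+1$ there is a balanced $n$-periodic network of degree~$d$ whose quotient has exactly two vertices, and by part~\ref{item:structureclaimonevertex} there is one with a single vertex whenever $d$ is even and $d\ge2n$. Hence an irreducible network has at most two vertices in its quotient, and at most one in the even case $d\ge2n$. It then remains to classify the connected multigraphs on one or two vertices all of whose vertices have degree~$d$, and to decide which of these actually occur as quotients of balanced $n$-periodic networks; in particular I must show that a one-vertex quotient forces $d$ even with $d\ge2n$.

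The combinatorial classification is elementary: a connected one-vertex multigraph of degree~$d$ is $B_{d/2}$ (so $d$ must be even), while a connected two-vertex multigraph both of whose vertices have degree~$d$ has the same number $\ell\ge0$ of loops at each vertex and some number $k\ge1$ of connecting edges with $2\ell+k=d$, hence is $D_{\ell,k}$ with $1\le k\le d$. Since $N$ is connected and the covering $N\to N/\Lambda$ preserves vertex degrees, the quotient of an irreducible network is indeed one of these graphs.

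The geometry enters through a single observation: under the covering $N\to N/\Lambda$ a loop of the quotient lifts to one $\Lambda$-orbit of parallel translates of a single edge, of displacement $w\in\Lambda\setminus\{0\}$, so at each vertex $p$ of~$N$ this loop contributes exactly two edge ends, pointing in the opposite directions $\pm w/|w|$; therefore the loops at~$p$ contribute nothing to the total force $F(p)$ of~\eqref{eq:equilibrium}. For a two-vertex quotient $D_{\ell,k}$, balancedness at one vertex then reduces to the vanishing of a sum of $k$ unit vectors coming from the connecting edges, which is impossible for $k=1$; hence $k\ge2$, and with $2\ell=d-k\ge0$ also $k\le d$. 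For a one-vertex quotient $B_m$, $m=d/2$, the network $N$ is the $\Lambda$-orbit of the $m$ segments $[p,p+w_i]$, and $N$ is connected exactly when $w_1,\dots,w_m$ generate~$\Lambda$, which forces $m\ge n$, i.e.\ $d\ge2n$. Putting these facts together: if $d$ is even and $d\ge2n$ the least possible number of vertices is one, so an irreducible network covers $B_{d/2}$; for all other $d\ge n+1$ there is no one-vertex network at all, the least number of vertices is two by Lemma~\ref{lemma:structure}, and an irreducible network covers some $D_{\ell,k}$ with $2\le k\le d$ and $d-k=2\ell$.

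I expect the only real subtlety to be the bookkeeping in the geometric step: verifying carefully that the preimage of a loop is a single orbit of parallel translates, so that the two edge ends it contributes at any vertex are genuinely antipodal and their contributions to $F(p)$ cancel. Once this is in place, the connectivity count for $B_m$ and the ``sum of $k$ unit vectors'' argument for $D_{\ell,k}$ are immediate, and feeding them into the definition of irreducibility together with Lemma~\ref{lemma:structure} gives the statement.
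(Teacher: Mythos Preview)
Your proposal is correct and follows essentially the same approach as the paper: use Lemma~\ref{lemma:structure} to bound the number of vertices from above, classify the one- and two-vertex multigraphs, exclude the one-vertex case for $d<2n$ (you phrase this via connectivity of the edge-displacement vectors generating~$\Lambda$, the paper via the circuit-rank inequality $\rank B_{d/2}=d/2\ge n$; these are equivalent), and exclude $k=1$ by balancing. Your treatment of the $k=1$ case is in fact more explicit than the paper's, which simply asserts that an immersion covering $D_{\ell,1}$ cannot be balanced.
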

\begin{proof}
  For even $d\geq2n$, 
  Lemma~\ref{lemma:structure}\,\ref{item:structureclaimonevertex} asserts the
  existence of a network $N$ whose quotient $B_{d/2}$ has one vertex; 
  clearly $N$ is irreducible.  
  A finite graph with one vertex necessarily has even degree.
  For odd $d\geq n+1$, networks with two vertices exist 
  by part~\ref{item:structureclaimtwovertices} of the lemma, 
  and so these networks are irreducible.
  
  It remains to show that for even $d$ with 
  $n+1\leq d < 2n$ networks with two vertices are irreducible.
  On the contrary, suppose the quotient has only one vertex, 
  i.e., it is $B_{d/2}$.
  Since the quotient graph of an $n$-periodic network has 
  circuit rank at least~$n$, this gives $n\ge \rank B_{d/2}=d/2$, 
  ruling out this case.

  Finally a quotient $D_{\ell,k}$ with $k=1$ is impossible, 
  as an immersion covering $D_{\ell,1}$ cannot be balanced. 
\end{proof}
\begin{remark}
  For $d$ odd the number of graphs which are admissible 
  for the Proposition is $\lfloor d/2 \rfloor$ and so increases with~$d$.
% to be precise:
% even d: k=2,4,...,d gives d/2 possibilities
% odd d:  k=3,5,...,d gives (d-1)/2 possibilities
% means altogether \lfloor d/2 \rfloor possibilities
  We should expect that minimizers favour
  a small number of loops $\ell$, 
  since it seems easier to make the $k$ bridges short.
  Nevertheless we will see that for $n=3$ and $d=5$ 
  the quotient graph $D_{1,3}$ leads to a shorter minimizer
  than~$D_5$.
\end{remark}
% Remark:
% For degree $d$ we have $\rank(B_{d/2})=d$ and $\rank(D_{(d-k)/2,k})=d-1$.
% In case $d\geq n+1$ Prop.~\ref{prop:structure} implies that
% an irreducible $n$-periodic network of degree $d$ has rank coinciding 
% with~$n$ if and only if $d=n+1$ or $d\geq2n$ is an even number.

%%%%%%%%%%%%%%%%%%%%%%%%%%%%%%%%%%%%%%%%%%%%%%%%%%%%%%%%%%%%%
\section{Networks of degree \texorpdfstring{$d=n+1$}{d=n+1}}\label{sec:simplex}

We want to determine optimal $n$-periodic networks of degree $n+1$. 
For dimension $n=3$ the degree is $d=4$, %, the simplest case which is not Steiner. 
and the minimizer %among irreducible triply networks of degree $d=4$ 
will turn out to be the well-known diamond network, 
which can be characterized by the fact
that the neighbours of each vertex form the vertices of a regular tetrahedron.

In the present section we obtain the same characterization in arbitrary
dimension: 
The minimizers among irreducible $n$-periodic networks of degree $d=n+1$ 
are networks $N$ for which each vertex~$q\in N$ is the center 
of symmetry of a regular $n$-simplex, defined by the neighbours of~$q$. 
This will be shown in Theorem~\ref{thm:dipole}.

Our first goal is an estimate on the length for a graph $G_0$
connecting the origin to the vertices of an arbitrary simplex~$\Delta$:
%\pagebreak[4]%
\begin{proposition}\label{prop:nsimplex}
	Let $\Delta$ be an $n$-simplex with vertices $p_0,\ldots,p_n\in\R^n$ 
  and volume $V(\Delta)>0$. Then
	\begin{align}\label{eq:nsimplexformula}
		\frac{(L(G_0))^n}{V(\Delta)}\geq n!\sqrt{(n+1)^{n-1}\,n^n}\,,
	\end{align}%
	where we set $L(G_0):=\sum_{i=0}^n|p_i|$. 
  Equality holds if and only if $\Delta$ is a regular $n$-simplex
  with symmetry centre the origin.
\end{proposition}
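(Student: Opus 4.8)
The plan is to reduce the inequality to a pair of well-understood optimization problems: first, for a fixed simplex shape, optimizing the location of the origin relative to $\Delta$ so as to minimize $\sum_i |p_i|$; and second, among all simplices of given volume, optimizing the shape. I would start from the observation that $L(G_0)=\sum_{i=0}^n |p_i|$ is a sum of distances from a common point (here, the origin) to the vertices, so for a \emph{fixed} set of vertices $p_0,\dots,p_n$ the minimum of $\sum_i |x-p_i|$ over $x\in\R^n$ is the Fermat--Torricelli (geometric median) value; thus the left-hand side is smallest when the origin sits at the Fermat point of $\Delta$. So it suffices to prove \eqref{eq:nsimplexformula} with $L(G_0)$ replaced by this minimal sum, and to characterize equality there.

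Next I would fix the volume $V(\Delta)=:V$ and ask which simplex shape minimizes the Fermat-sum. By symmetrization/compactness (the functional is continuous, and one can rule out degeneration since the Fermat-sum stays bounded below while $V$ is fixed), a minimizer exists; I would then argue it must be the regular simplex. The cleanest route is a symmetry argument: at a minimizing configuration, the balancing condition $\sum_i (x-p_i)/|x-p_i| = 0$ holds at the Fermat point $x$ (Proposition~\ref{prop:equilibrium}), and a first-variation computation with respect to moving each vertex $p_i$ — subject only to the volume constraint, whose gradient at $p_i$ is proportional to the outward normal of the opposite facet — forces each unit vector $(p_i-x)/|p_i-x|$ to be parallel to that facet normal and all the $|p_i-x|$ to be equal. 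One checks this system is solved (up to isometry and scaling) only by the regular simplex centered at $x$. Alternatively, an isoperimetric-type inequality for simplices directly bounds $V(\Delta)$ by a symmetric function of the $|p_i-x|$, with equality iff regular; I would use whichever is shorter in the writeup.

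Finally I would pin down the constant by direct computation on the regular $n$-simplex. For the regular $n$-simplex with circumradius $r$ (so the center is at the origin and $|p_i|=r$ for all $i$), one has $L(G_0)=(n+1)r$, and the volume is the standard value $V=\frac{1}{n!}\sqrt{(n+1)^{n+1}/n^n}\;(r/\!\sqrt{?})^n$ — more precisely, I would just insert the known formula $V(\Delta)=\frac{(n+1)^{(n+1)/2}}{n!\,n^{n/2}}\,\Big(\frac{r}{\sqrt{n+1}}\cdot\sqrt{?}\Big)^n$ and simplify; the point is that $(L(G_0))^n/V(\Delta)$ is scale-invariant, so plugging in any convenient normalization of the regular simplex yields the number $n!\sqrt{(n+1)^{n-1}n^n}$ on the nose. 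Then one records that equality in both the Fermat-point step and the shape-optimization step holds exactly when $\Delta$ is regular and the origin is its center, which is the claimed equality case.

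The main obstacle is the shape-optimization step — showing the regular simplex is the unique minimizer of the Fermat-sum for fixed volume. The Fermat point has no closed form for a general simplex, so the first-variation argument has to handle the coupling between moving a vertex and the induced motion of the Fermat point; the saving grace is that, by the envelope theorem, this induced motion contributes nothing to first order precisely because $x$ is critical for $\sum_i|x-p_i|$, so only the explicit dependence on $p_i$ matters. Making that rigorous, plus the compactness/non-degeneration argument, is where the real work lies; the rest is bookkeeping with the regular-simplex volume formula.
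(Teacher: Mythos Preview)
Your outline is workable but differs from the paper's route, and one step is overstated. The paper does not separate a Fermat-point reduction from a shape optimization. Instead it proves a sharp \emph{pyramid inequality} (Lemma~\ref{lemma:pyramid}): for a pyramid with apex $p_0$ over a base hyperplane and any point~$q$, the ratio $(L(G_q))^n/V(\Delta)$ is bounded below explicitly, with equality iff $p_0-q$ is perpendicular to the base, all base distances $|p_i-q|$ coincide, and a definite relation holds between apex and base distances. The lemma is proved by one-variable calculus in the apex distance combined with the AM--HM inequality. For Proposition~\ref{prop:nsimplex} one takes a minimizer (compactness, as you also propose) and applies the lemma with each vertex $p_\ell$ in turn playing the role of apex; the equality conditions, cycled through all~$\ell$, give $|p_0|=\cdots=|p_n|$ and $\langle p_\ell,p_i-p_j\rangle=0$ for $i,j\neq\ell$, which is regularity. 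The same lemma is reused for the degree-$5$ case (Theorem~\ref{thm:sqp}), so it earns its keep.

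Your Lagrange step has a small gap: the first-order condition $u_i=\mu\,\tfrac{A_i}{n}\,n_i$ (with $u_i=(p_i-x)/|p_i-x|$ and $n_i$ the opposite-facet normal) yields $u_i\parallel n_i$ and, from $|u_i|=1$, equal facet areas~$A_i$; it does \emph{not} by itself give equal $|p_i-x|$. What remains is to show that an orthocentric simplex with all facets of equal $(n{-}1)$-volume is regular. This is true but needs an extra argument: writing $p_i=|p_i|\,n_i$ and $\langle p_j,n_i\rangle=-d_i$ for $j\neq i$, symmetry of $\langle n_i,n_j\rangle$ gives $d_i|p_i|=\text{const}$, while equal altitudes give $|p_i|+d_i=\text{const}$, so every $|p_i|$ solves the same quadratic; two distinct roots would force $\langle n_i,n_j\rangle=-1$ between the two groups, collapsing the simplex. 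So your approach can be completed, but the sentence ``forces \dots\ all the $|p_i-x|$ to be equal'' attributes too much to the bare first variation; the paper's pyramid lemma sidesteps this by producing the equal-length condition directly in its equality case.
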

Our proof depends on the estimate contained in the next lemma.  
Since we intend to use the estimate also 
for the proof of Theorem~\ref{thm:sqp} below, 
we state it for a case more general than a simplex,
namely for a pyramid.  
% then $n=3$ and $k=4$.

Consider a convex polyhedron~$E$ 
contained in the hyperplane $P:=\R^{n-1}\times\{0\}$,
such that $E$ has $k\ge n\ge 2$ pairwise distinct vertices 
$p_1,\ldots,p_k\in P$.
We assume $E$ has positive $(n-1)$-dimensional volume~$V_E>0$.
We then take a pyramid~$\Delta\subset\R^n$ with base~$E$ 
and apex $p_0\in\R^n\setminus P$ as in Figure~\ref{fig:pyramid}. 
We denote with $V(\Delta)>0$ its $n$-dimensional volume.
Moreover, we consider an arbitrary point $q\in\R^n$ 
and a graph $G_q$ which is the union of the edges from~$q$
to the vertices $p_0,\ldots,p_k$ of~$\Delta$. 
We denote its length by~$L(G_q)$, and the total length 
of the edges from~$q$ to the base vertices by 
$s:=\sum_{i=1}^k\vert p_i-q\vert>0$.

\begin{figure}
		\centering
		\begin{tikzpicture}[ultra thick,scale=.6]
			\coordinate(p1) at (0,-2);
			\coordinate(p2) at (-.4,0);
			\coordinate(p3) at (5.5,0);
			\coordinate(p4) at (5,-2);
			\coordinate(p5) at (1,5);
			\coordinate(q0) at (1,1.5);
			\draw[ultra thick](q0) -- (p3);
			\draw[gray,thick,cross](p4) -- (p5);
			\draw[gray,thick,dashed](p2) -- (p3);
			\draw[gray,thick](p3) -- (p4);
			\draw[ultra thick](q0) -- (p2);
			\draw[gray,thick,cross](p1) -- (p5);
			\draw[gray,thick](p1) -- (p2);
			\draw[ultra thick](q0) -- (p1);
			\draw[ultra thick](q0) -- (p4);
			\draw[ultra thick](q0) -- (p5);
			\draw[gray,thick](p2) -- (p5);
			\draw[gray,thick](p3) -- (p5);
			\draw[gray,thick](p1) -- (p4);
			\node[left] at (p2) {$p_2$};
			\node[above] at (p5) {$p_0$};
			\node[right] at (p3) {$p_3$};
			\node[right] at (p4) {$p_4$};
			\node[left] at (p1) {$p_1$};
			\node[above right] at (q0) {$q$};
			\node at ($.5*(p1)+.5*(p3)$) {$E$};
		\end{tikzpicture}
		\caption{%
The pyramid $\Delta$ of Lemma~\ref{lemma:pyramid} 
with base $E$ and apex~$p_0$. The graph $G_q$ connects the vertices 
of $\Delta$ with a further point~$q$.}
		\label{fig:pyramid}
\end{figure}
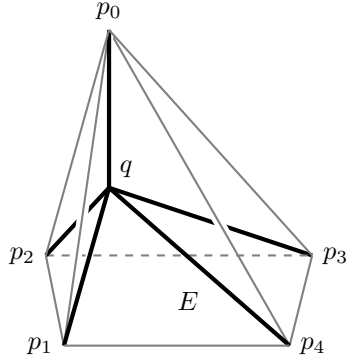

\begin{lemma}\label{lemma:pyramid}
  For given $p_0,p_1,\ldots,p_k$ and each $q\in\R^n$ the length of $G_q$ satisfies
	\begin{align}\label{eq:pyramid}
  \frac{(L(G_q))^n}{V(\Delta)}
  \geq\frac{n^2}{V_E}\biggl(\frac{k^2-1}{k^2}\, \frac{ns}{n-1}\biggr)^{n-1}.
	\end{align}
  The equality case is equivalent to the following conditions:
	$p_0-q$ is perpendicular to~$P$, as well as
  \begin{align}\label{eq:pyramidequality}
		\vert p_1-q\vert = \dots = \vert p_k-q\vert 
    = \frac{k(n-1)}{k^2-n}\vert p_0-q\vert\,,\quad\text{and}\quad
    \dist(q,P)=\frac s{k^2}\,.
  \end{align}
\end{lemma}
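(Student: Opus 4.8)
The plan is to reduce the estimate to a sequence of elementary one-variable optimizations, exploiting that the left-hand side only sees $q$ through the scalars $h:=\dist(q,P)$, the foot $q'$ of $q$ on $P$, and the distances to the vertices. First I would write $V(\Delta)=\tfrac1n V_E\, h_0$ where $h_0:=\dist(p_0,P)$, so that the target inequality becomes an estimate for $(L(G_q))^n$ in terms of $V_E$, $h_0$ and $s$. Since $|p_0-q|\ge |h_0 - h|$ with equality iff $p_0-q\perp P$, and since lowering $|p_0-q|$ to $|h_0-h|$ only decreases $L(G_q)$ while leaving $V(\Delta)$, $V_E$, $s$ untouched, I may assume from the outset that $p_0$ lies vertically above $q$; this is the first of the three equality conditions. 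After that, $L(G_q)=|p_0-q|+s = h_0 - h + s$ (taking $h\le h_0$, which must hold at a minimizer), so the whole left-hand side is $\dfrac{(h_0-h+s)^n}{\tfrac1n V_E h_0}$, and I am left to minimize this over the admissible $(h_0,h,s)$.

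The key constraint linking $h$ and $s$ is geometric: the base vertices $p_1,\dots,p_k$ lie in the hyperplane $P$, at planar distances $|p_i - q'|$ from the foot $q'$, with $|p_i-q|^2 = h^2 + |p_i-q'|^2$; hence $s=\sum_i\sqrt{h^2+|p_i-q'|^2}$. Here is where the polytope $E$ enters only through its $(n-1)$-volume. The natural tool is a sharp lower bound for $\sum_i |p_i - q'|$ (the total planar star length) in terms of $V_E$ and $k$, of isoperimetric type: a convex $(k)$-vertex polytope in $\R^{n-1}$ with $(n-1)$-volume $V_E$ cannot have all of its vertices too close to an interior point, so $\sum_i |p_i-q'|\ge c(k,n)\, V_E^{1/(n-1)}$ for an explicit constant; equality should force $q'$ to be a suitable center and the $|p_i-q'|$ to be equal, which will produce the equality clause $|p_1-q|=\dots=|p_k-q|$. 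Combined with $s\ge \sum_i |p_i-q'|$ and with $s\ge k h$ (each term is $\ge h$), I get two lower bounds on $s$ that I can feed into the $(h_0-h+s)^n / h_0$ expression.

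Carrying this out, I would first fix $s$ and $h_0$ and note the expression is increasing in $h_0$ for $h_0$ above a threshold and that the relevant regime gives the factor $(n^2/V_E)\big(\tfrac{ns}{n-1}\big)^{n-1}$ after optimizing in $h_0$ via the AM–GM step $(a+nb)^n \ge (n+1)^{n+1} a\, b^n/ n^n$-type inequality applied to $h_0 - h$ and the $n$ copies coming from $s$; then optimize in $h$, where the balance $\dist(q,P)=s/k^2$ drops out of setting the derivative to zero, and finally use the planar isoperimetric bound with the factor $k^2-1 = (k-1)(k+1)$ appearing from combining $s\ge kh$ with the equality-forcing of all base distances. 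The main obstacle I anticipate is the sharp planar estimate $\sum_i|p_i-q'| \ge c(k,n)V_E^{1/(n-1)}$ with the correct constant and equality characterization: for $k=n$ (the simplex case needed in Proposition~\ref{prop:nsimplex}) this is classical, but for a general $k$-vertex convex polytope one needs the right extremal configuration, and identifying it — together with checking that all three equality conditions in~\eqref{eq:pyramidequality} are simultaneously attainable — is the delicate part; everything else is bookkeeping with AM–GM.
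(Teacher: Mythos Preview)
Your reduction to the case $p_0-q\perp P$ and the subsequent optimization over the apex height are essentially the paper's first two steps: the paper writes $V(\Delta)\le\frac1n(h+z)V_E$ with $z:=|p_0-q|$, then minimizes $(s+z)^n/(h+z)$ over $z$, arriving at
\[
\frac{(L(G_q))^n}{V(\Delta)}\ge\frac{n^2}{V_E}\Bigl(\frac{n(s-h)}{n-1}\Bigr)^{n-1}.
\]
The decisive remaining task is then to show $s-h\ge\frac{k^2-1}{k^2}\,s$, i.e.\ $h\le s/k^2$, and this is where your plan goes astray.

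No isoperimetric inequality relating $\sum_i|p_i-q'|$ to $V_E$ is needed or even useful here: both $V_E$ and $s$ already appear explicitly on the right of~\eqref{eq:pyramid}, and after the first step $V_E$ cancels from the inequality entirely, so a bound of the form $\sum_i|p_i-q'|\ge c(k,n)V_E^{1/(n-1)}$ contributes nothing and certainly does not yield $h\le s/k^2$. The paper instead passes to the $L(G_q)$-minimizing $q$ (which exists and has all edge lengths positive); at this point $G_q$ is balanced, and the component of~\eqref{eq:equilibrium} orthogonal to $P$ reads $\sum_{i=1}^k h/x_i=1$. You came close to this when you wrote that ``the balance $\dist(q,P)=s/k^2$ drops out of setting the derivative to zero'', but what actually drops out of the first-order condition is $\sum_i h/x_i=1$; one still needs the harmonic--arithmetic mean inequality to conclude
\[
h=\Bigl(\sum_{i=1}^k \frac1{x_i}\Bigr)^{-1}\le \frac{s}{k^2},
\]
with equality iff $x_1=\dots=x_k$. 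That single HM--AM step is the source of the factor $(k^2-1)/k^2$, and combined with the equality case $z=z_0$ of the $z$-optimization it also delivers~\eqref{eq:pyramidequality}. Replace the isoperimetric detour by balancing plus HM--AM and the argument goes through (with the caveat, as in the paper, that $h\le s/k^2$ is established at the balanced~$q$).
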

\begin{proof}
	Set $x_i := \vert p_i-q\vert$ for $i=1,\ldots,k$, 
        and $z:=\vert p_0-q\vert$. Then
	\[L(G_q) = \sum_{i=1}^kx_i+z = s+z\,,\]
  which is positive due to $s > 0$.
	Setting $h:=\dist(q,P)\ge 0$ 
  we can estimate the volume of the pyramid $\Delta$ by
	\begin{align}\label{eq:pyramidvolume}
		V(\Delta) \leq \frac1n(h+z)V_E\,,
	\end{align}
	where equality is attained if and only if $p_0-q$ is perpendicular to~$P$ 
  % by definition of orthogonality, this is also valid for p_0=q
  and $q$ lies in the closed slab of $\R^n$ between $P$ and $p_0$.
  Note that $p_0\notin P$ implies $h+z>0$.  
  Therefore, an equivalent inequality is 
	\begin{align}\label{eq:pyramidfunctionz}
		\frac{(L(G_q))^n}{V(\Delta)}\geq\frac{n(s+z)^n}{V_E(h+z)}\,.
	\end{align}
	For a moment, let us regard the right-hand side of \eqref{eq:pyramidfunctionz} 
  as a function of $z\in (-h,\infty)$; due to $h<s$ this function is positive.
  Differentiation yields the unique critical point 
	\[
    z_0:=\frac{s-nh}{n-1}\,.
  \]
  As $z$ tends to $-h$ or to infinity, 
  the right hand side of \eqref{eq:pyramidfunctionz} 
  tends to infinity, and so $z_0$ assigns a minimum 
  % note that the numerator is >0 
  to the right hand side of~\eqref{eq:pyramidfunctionz}.
  But $s > kh\ge nh$ implies $z_0>0$ so that
  we have shown that for $z\in(0,\infty)$ 
  the right-hand side of \eqref{eq:pyramidfunctionz} 
  takes a unique strict minimum at $z_0$.
  
  Inserting $z_0$ into the inequality \eqref{eq:pyramidfunctionz} 
  yields
	\begin{align}\label{eq:pyramidfunctionh}
		\frac{(L(G_q))^n}{V(\Delta)}
    \geq\frac{n^2}{V_E}\Big(\frac{n(s-h)}{n-1}\Big)^{n-1}\,.
	\end{align}
  In particular, \eqref{eq:pyramid} holds strictly in case $h=0$,
  implying the lemma for this case.
  Thus we may assume $h>0$ in the following.

  The existence of $z_0$ means that there exists a $q\in\R^n$ 
  minimizing $(L(G_q))^n/V(\Delta)$ for the given~$p_i$'s.
  The equality discussion for~\eqref{eq:pyramidfunctionz} implies we must
  have $p_0-q\in P^\perp$, and since \eqref{eq:pyramidfunctionz} has a
  strict mimimum at $z_0>0$, we have $|p_0-q|=z_0>0$, 
  and so $q\not=p_0$, in particular.

  The volume $V(\Delta)$ is independent of $q$, 
  so that $q$ also minimizes $L(G_q)$.  
  Since all edge lengths are positive, $G_q$ must be balanced at~$q$.
  The balancing formula \eqref{eq:equilibrium} gives $\sum_{i=1}^k h/{x_i}=1$.
  This harmonic mean can be estimated by an arithmetic mean,
	\begin{align}\label{eq:pyramidheight}
		h = \Big(\sum_{i=1}^k\frac1{x_i}\Big)^{-1}\leq \frac s{k^2}\,,
	\end{align}
	where equality %in \eqref{eq:pyramidheight} 
  holds if and only if $x_1=\dots=x_k$. 
  Combining \eqref{eq:pyramidfunctionh} and \eqref{eq:pyramidheight} 
  yields the desired estimate~\eqref{eq:pyramid}.
	
	Finally, the equality statement \eqref{eq:pyramidequality} follows
  from considering the equality cases in \eqref{eq:pyramidvolume}, 
  \eqref{eq:pyramidfunctionh} and~\eqref{eq:pyramidheight}: 
  $(L(G_q))^n/V(\Delta)$ is minimal if and only if $p_0-q\in P^\perp$, $z=z_0$,
  and $x_1=\ldots=x_k$, so that for all $i=1,\dots,k$
  \[
    h=\frac s{k^2} % by \eqref{eq:pyramidheight}
    = \frac1k\vert p_i-q\vert\quad\text{and}
    \quad z=\frac{s-nh}{n-1} % see formula for z_0 
    =\frac{k^2-n}{k(n-1)}\vert p_i-q\vert\,. % inserting the first formula
  \]
  \nopagebreak
  \vspace{-10mm}
 
\end{proof}

\pagebreak[4]
\begin{proof}[Proof of Proposition~\ref{prop:nsimplex}]
	The left-hand side of \eqref{eq:nsimplexformula} is scaling invariant 
  so we may assume $V(\Delta)=1$.  Moreover,
  $L(G_0)$ is a continuous functions of $p_0,\ldots,p_n\in\R^n$, 
  and a minimizing sequence for $L(G_0)$ clearly has all $|p_i|$ bounded.
  % so minimize a cts. function over compact set.
  Thus a minimizer~$\Delta$ for $(L(G_0))^n/V(\Delta)$ exists.
	
  We want to show that $\Delta$ is regular.
  % i.e., $|p_j-p_k|$ is equal for all $0\leq j\neq k\leq n$. 
  For arbitrary $0\leq\ell\leq n$,
  regard the simplex~$\Delta$ as a pyramid with apex $p_\ell$
  and apply Lemma~\ref{lemma:pyramid} with $k=n$ and $q=0$.
  The first equations of~\eqref{eq:pyramidequality} give
	\begin{align*} 
		|p_0|=\dots=|p_n|\,,
	\end{align*}
	while the perpendicularity of $p_\ell$ to the hyperplane containing
  the other vertices gives
	\begin{align*}
		0=\langle p_\ell,p_i-p_j\rangle
    =\langle p_\ell,p_i\rangle-\langle p_\ell,p_j\rangle\quad
    \text{for all $i,j\neq\ell$}\,.
	\end{align*}
  We conclude the $n+1$ vertices are contained in a sphere and make pairwise
  equal angles when viewed from the origin. 
  % The same holds for all scalings, and so:
  Hence $\Delta$ is a regular simplex as stated.
	
	For a regular $n$-simplex $\Delta$, length and volume
  can be computed as the following functions of the edge length~$a$,
  % clear for L, and proved inductively for V 
	\[
	  L(G_0)=a\,\frac{(n+1)}{\sqrt2}\sqrt{\frac n{n+1}}  \quad\text{and}\quad 
	  V(\Delta)=\frac{a^n}{n!}\sqrt{\frac{n+1}{2^n}}\,.
	\]
	Inserting these values into \eqref{eq:nsimplexformula} 
  gives the desired estimate.
	% L^n = a^n sqrt{n^n (n+1)^n / 2^n}
	% 1/V = n! / a^n   sqrt{2^n / (n+1)}
  % ==> L^n/V = n! \sqrt{n^n (n+1)^{n-1}} , ok
\end{proof}
From the proposition we now derive an existence and uniqueness statement
which in particular applies to degree-$4$ networks in~$\R^3$
or to doubly periodic Steiner networks in~$\R^2$.
% which we covered in~\cite{periodicsteiner}. 
%
\begin{theorem}\label{thm:dipole}
	Let $N$ be an irreducible $n$-periodic network of degree $d=n+1$ 
  for $n\geq2$. Then its length quotient satisfies
	\begin{align}\label{eq:dipole}
		\frac{L^n}V\geq\sqrt{(n+1)^{n-1}n^n}\,.
	\end{align}
	Equality holds if and only if $N$ covers the dipole graph $D_{n+1}$ and 
  for each vertex $q\in N$ the leaves of $\Star q$ 
  form the vertices of a regular $n$-simplex.
\end{theorem}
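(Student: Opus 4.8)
The strategy is to reduce the global network statement to the local simplex estimate of Proposition~\ref{prop:nsimplex}, applied vertex by vertex, and then to recover the lattice volume from the per-vertex simplex volumes. By Proposition~\ref{prop:structure}, an irreducible $n$-periodic network of degree $d=n+1$ covers one of the double bouquet graphs $D_{\ell,k}$ with $2\ell+k=n+1$ and $k\ge 2$. The first step is to identify which of these actually arise and to treat the number $m$ of vertices of the quotient (so $m\in\{1,2\}$). For each vertex $q$ of $N$ — equivalently, each vertex of the quotient — the star $\Star q$ has $n+1$ leaves $q_1,\dots,q_{n+1}$, and they span an $n$-simplex $\Delta_q$ with vertex set $\{q_i\}$ (nondegenerate generically; I will need to argue the degenerate case can be excluded for a minimizer, or handled by a limiting argument as in the proof of Proposition~\ref{prop:nsimplex}). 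Applying Proposition~\ref{prop:nsimplex} with the origin translated to $q$ gives
\begin{align*}
  \bigl(L(\Star q)\bigr)^n \geq n!\sqrt{(n+1)^{n-1}n^n}\;V(\Delta_q),
\end{align*}
with equality iff $\Delta_q$ is a regular $n$-simplex centred at $q$.

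The second step is to combine these local estimates. Summing $L(\Star q)$ over the $m$ quotient vertices double-counts every edge (each edge lies in exactly two stars), so $\sum_q L(\Star q)=2L$. Since $x\mapsto x^n$ is convex, by power-mean (or Hölder) $\bigl(\tfrac1m\sum_q L(\Star q)\bigr)^n \le \tfrac1m\sum_q \bigl(L(\Star q)\bigr)^n$, i.e.
\begin{align*}
  (2L)^n \leq m^{n-1}\sum_q \bigl(L(\Star q)\bigr)^n
        \leq m^{n-1}\, n!\sqrt{(n+1)^{n-1}n^n}\,\sum_q V(\Delta_q),
\end{align*}
with equality in the power-mean step iff all stars have equal length. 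Now I need the key geometric input: a comparison between $\sum_q V(\Delta_q)$ and the fundamental volume $V$. The natural claim is $\sum_q V(\Delta_q)\le c_{n,m}\,V$ for an explicit constant, coming from the fact that the simplices $\Delta_q$ (one per vertex, or per vertex-orbit) tile or pack inside a bounded number of fundamental domains. Concretely, the $n+1$ edges at $q$ represent a spanning set of $H_1(N/\Lambda)$; choosing $n$ of them that generate $\Lambda$ (over $\R$), the simplex $\Delta_q$ — really the simplex on $q$ together with $n$ of its neighbours — has volume a rational multiple of $V$, and a careful bookkeeping over the $D_{\ell,k}$ structure pins down the constant so that $2^n L^n/V \ge m^{n-1}\,n!\sqrt{(n+1)^{n-1}n^n}\cdot(\text{vol factor})^{-1}$ rearranges to \eqref{eq:dipole}, with the comparison being sharp precisely for $D_{n+1}$ ($m=2$, $\ell=0$, $k=n+1$).

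The main obstacle is exactly this volume comparison: unlike the star-length bound, which is purely local, relating $\sum_q V(\Delta_q)$ to the lattice covolume requires understanding how the $n+1$ edge vectors at each vertex sit relative to a generating set of $\Lambda$, and why the dipole configuration $D_{n+1}$ is the one making the bound tight. I expect to handle this by observing that for $D_{n+1}$ the two vertices $p,q$ of the quotient are joined by $n+1$ edges whose $\Lambda$-translates realize, for a suitable choice of base edge $e_0$, the remaining $n$ edges as exactly the lattice generators $g_1,\dots,g_n$; the simplex $\Delta_q$ is then, up to translation by $e_0$, the lattice simplex on $0,g_1,\dots,g_n$, of volume $V/n!$, so $\sum_q V(\Delta_q)=2V/n!$, and the chain of inequalities collapses to \eqref{eq:dipole} with $m=2$. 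For any other admissible quotient graph, I show the analogous volume is strictly smaller (more loops force some $\Delta_q$ to be degenerate or thin), so the inequality is strict; conversely, tracking the equality cases of Proposition~\ref{prop:nsimplex} and of the power-mean inequality forces every $\Delta_q$ regular and centred at $q$ and all stars of equal length, which together with connectivity and periodicity is readily seen to force the covering of $D_{n+1}$ with the stated regular-simplex geometry, giving the uniqueness claim.
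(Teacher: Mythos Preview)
Your chain of inequalities runs in the wrong direction. The power-mean step gives
\[
  (2L)^n \le m^{n-1}\sum_q \bigl(L(\Star q)\bigr)^n,
\]
an \emph{upper} bound on $(2L)^n$, while Proposition~\ref{prop:nsimplex} gives
$\bigl(L(\Star q)\bigr)^n \ge n!\sqrt{(n+1)^{n-1}n^n}\,V(\Delta_q)$,
a \emph{lower} bound on each summand. These two cannot be composed: from an upper bound on $(2L)^n$ in terms of $\sum_q (L(\Star q))^n$ together with a lower bound on the latter in terms of $\sum_q V(\Delta_q)$ you conclude nothing about $L^n/V$. In your displayed chain the second inequality is literally reversed, and the volume comparison you then propose, $\sum_q V(\Delta_q)\le c_{n,m}\,V$, would---even if it held---only continue an upper bound for $L^n/V$, not the required lower bound~\eqref{eq:dipole}.

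For the dipole topology $D_{n+1}$ your local idea does work, but without any averaging: at a single vertex $q$ every edge is a bridge, so $L(\Star q)=L$ exactly, and the differences $p_i-p_0$ of the $n+1$ neighbours generate~$\Lambda$, so $V(\Delta_q)=V/n!$ exactly; Proposition~\ref{prop:nsimplex} then yields~\eqref{eq:dipole} directly. The substantive part is the non-dipole case $D_{\ell,k}$ with $\ell\ge 1$, and here the paper does \emph{not} apply Proposition~\ref{prop:nsimplex} to the full $n$-simplex of neighbours. Instead it strips the loop edges to obtain a $(k-1)$-periodic subnetwork $N'$ lying in a $(k-1)$-dimensional affine subspace, uses the AM--GM inequality on the $2\ell=n+1-k$ loop lengths to prove
\[
  \frac{L^n}{V}\ge \frac{n^n}{(k-1)^{k-1}}\,\frac{(L')^{k-1}}{V'},
\]
and only then invokes Proposition~\ref{prop:nsimplex} in dimension $k-1$. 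The resulting explicit bound $L^n/V\ge n^n\sqrt{k^{k-2}(k-1)^{1-k}}$ is strictly decreasing in $k$, so it is minimized at $k=n+1$; this simultaneously proves~\eqref{eq:dipole} and shows it is strict whenever $\ell\ge 1$. Your plan supplies no argument for this part, and the direction problem above would have to be repaired before any ``volume comparison'' could help.
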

\noindent
For $n=3$ this proves the standard diamond network with degree $d=4$ minimizes 
the length quotient, with $L^3/V=\sqrt{4^2\,3^3}=12\sqrt 3$. 
See Section~\ref{sec:degfour} for a complete discussion of the case $d=4$ 
in~$\R^3$.  Let us also note that for $n=2$ the Theorem confirms the
optimality of the hexagonal \hcb\ network, a fact we 
proved in \cite{periodicsteiner}.  % in Proposition 4.3
% indeed: \sqrt{3^1 2^2} = 2\sqrt 3 
% Note: Thm. also holds in dim 1: L^1/V \ge 1
\begin{proof}
  By Proposition~\ref{prop:structure}, 
  the network $N$ covers the double bouquet graph $D_{\ell,k}$ 
  for some $2\leq k\leq n+1$ with $2\ell+k=n+1$. 
  Note that $D_{\ell,k}$ contains exactly
  \begin{align}\label{eq:dipolecycles}
	  2\ell+(k-1)=n = \rank N
  \end{align}
  cycles generating the first homology group; since $N$ is $n$-periodic
  they are independent, that is, each lifts to an independent 
  generator of~$\Lambda$.

  We remove from $N$ all edges projecting to the loops of $D_{\ell,k}$.
  From the remaining subset we consider a component~$N'\subset N$.
  The graph $N'$ covers~$D_k$ and so has degree~$k$. 
% A fact we don't need:
% Since we have removed edges in opposite pairs from~$N$, 
% the graph $N'$ remains balanced. 
  Moreover, since each cycle of $D_{\ell,k}$ is independent,
  so is each of the $k-1$ generating cycles of~$D_k$. 
  Consequently $N'$ is a $(k-1)$-periodic network,
  contained in some $(k-1)$-dimensional affine subspace of~$\R^n$.

  Let $L'$ denote the length of~$N'$ 
  and $V'$ be its $(k-1)$-dimensional volume. 
  We claim
\begin{align}\label{eq:dipolelower}
  \frac{L^n}{V}\geq\frac{n^n}{(k-1)^{k-1}}\frac{(L')^{k-1}}{V'}
  \qquad\text{for}\quad k=2,\ldots,n+1.
\end{align}
  In case $k=n+1$ the quotient $N/\Lambda$ has no loops, 
  so that $N'=N$ and~\eqref{eq:dipolelower} is immediate. 
  Thus consider the case $k\leq n$.  
  Each of the $2\ell$ loops of $D_{\ell,k}$ gives rise to 
  a generator of~$\Lambda$, not contained in~$\Lambda'$.
  Moreover, the loops lift to straight edges $e_1,\ldots,e_{2\ell}$
  of~$N$ which are not contained in~$N'$. 
  These edges contribute length to~$N$, but not to~$N'$,
  and we can estimate
  \begin{align}\label{eq:dipoleloweredge}
		\frac{L^n}V\geq\frac{\big(L' + \vert e_1\vert + \cdots + \vert e_{2\ell}\vert\big)^n}{V'\,\vert e_1\vert\cdots\vert e_{2\ell}\vert}\,.
	\end{align}
% Note: While in the numerator there is equality,
%       the estimate occurs at the denominator and is not sharp.  
%       Hence equality of the estimate can only occur for k=n+1, that is,
%       for 2\leq k\leq n  a sharp estimate would have a larger RHS.
	In terms of $x:=\sqrt[2\ell]{\vert e_1\vert\cdots\vert e_{2\ell}\vert}>0$ 
  the estimate on geometric and arithmetic mean yields
	\begin{align}\label{eq:dipolefunction}
		\frac{L^n}V\ge \frac{\big(L' + 2\ell x\big)^n}{V'x^{2\ell}}
    =\frac{\big(L'+(n+1-k)x\big)^n}{V'\,x^{n+1-k}}\,.
	\end{align}
	Regard the right-hand side of~\eqref{eq:dipolefunction} as a function 
  of $x\in(0,\infty)$, and differentiate to find the unique critical point at
	$x_0 = {L'}/({k-1})$.
	Moreover, the limit $x\to0$ verifies that $x_0$ is a minimum. 
  Insertion of $x_0$ into~\eqref{eq:dipolefunction} 
  proves our claim~\eqref{eq:dipolelower}.

We want to derive an explicit estimate from~\eqref{eq:dipolelower} 
which will show that $L^n/V$ can be estimated by its minimal value for $k=n+1$.
Pick a vertex $q\in N'$. 
Its $k$ neighbours $p_1,\ldots,p_k\in N'$ form the vertices of a $(k-1)$-simplex $\Delta$ (that is, a pyramid) with volume 
\[ 
  V(\Delta)=\frac{V'}{(k-1)!}\,.
\]
The length $L'$ of $N'$ coincides with the length of~$\Star q$.
We apply Proposition~\ref{prop:nsimplex} to $\Delta$ and conclude that the length quotient $L'^{k-1}/V'$ is minimal if and only if $\Delta$ is a regular $(k-1)$-simplex with $q$ the center of symmetry. 
  Estimating the right hand side of \eqref{eq:dipolelower} 
  with \eqref{eq:nsimplexformula} (for $n=k-1$) gives
	\begin{align}\label{eq:dipolek}
		\frac{L^n}{V}
    %\geq \frac{n^n}{(k-1)^{k-1}}\frac1{(k-1)!}\frac{(L(G_0))^{k-1}}{V(\Delta)}
    \geq n^n\sqrt{k^{k-2}(k-1)^{1-k}}
  \qquad\text{for}\quad k=2,\ldots,n+1.
	\end{align}
The right-hand side of~\eqref{eq:dipolek} is strictly decreasing in~$k$, 
and so $L^n/V$ can be estimated by the right hand side with $k=n+1$; 
in particular, \eqref{eq:dipole} holds.
% Indeed, set a_k:=RHS .  
% Then a_k/a_{k+1} = sqrt{ k^{k-2}(k-1)^{1-k} / [ (k+1)^{k-1} k^{-k} }
%                  = sqrt{ k^{2k-2} (k-1)^{1-k} / (k+1)^{k-1} } 
%                  = sqrt{ k^{2k-2} / [(k-1)(k+1)^{k-1} } 
%                  = sqrt{ (k^2)^{k-1} / [(k^2-1)^{k-1} }  >  1 .

Equality in \eqref{eq:dipolek} 
(and so in \eqref{eq:dipole}) can only hold for $k=n+1$,
in which case $N'=N$ and $N$ covers~$D_{n+1}$.
Our derivation shows that for $k=n+1$ equality in~\eqref{eq:dipole} 
holds precisely for the case that Proposition~\ref{prop:nsimplex}
holds with equality, namely for a regular $n$-simplex 
with $q$ the centre of symmetry.
\end{proof}
\begin{remark}
	For $2n>d>n+1$ an optimal $n$-periodic network of degree~$d$ does not necessarily cover~$D_d$. For example, in dimension $n=3$ the minimizer for $L^3/V$ among networks of degree $d=5$ is the \bnn\ network, 
covering $D_{1,3}$ (cf.~Tab.~\ref{tab:summary}).
\end{remark}
\pagebreak[4]

\section{Networks of degree \texorpdfstring{$d\geq2n$}{d>2n}}\label{sec:cube}

By Proposition~\ref{prop:structure}, an irreducible network 
of even degree~$d\geq2n$ covers the bouquet graph $B_{d/2}$. 
% (cf.~Figure~\ref{fig:structure}). 
We estimate its length quotient:
\begin{theorem}\label{th:pcu}
	Let $N$ be an irreducible $n$-periodic network of even degree $d\geq2n$ 
  with lattice $\Lambda$. Then
	\begin{equation}\label{eq:dlargertwon}
    \frac{L^n}V\geq\Big(\frac d2-n+1\Big)n^n\,.
  \end{equation}
	Equality holds if and only if $d = 2n$ 
  and $\Lambda$ is similar to the primitive lattice~$\mathbb Z^n$.
\end{theorem}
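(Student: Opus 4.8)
The plan is to reduce the estimate \eqref{eq:dlargertwon} to a purely combinatorial–geometric statement about a single vertex and its $d/2$ edge vectors generating the lattice. Write $d/2=:m\ge n$. Since $N$ is irreducible it covers $B_m$ by Proposition~\ref{prop:structure}, so picking the unique vertex $p$ and lifting, the star of $p$ consists of $m$ segments $\pm v_1,\dots,\pm v_m$ (each loop of $B_m$ lifts to a segment through $p$ and its translate), where $v_1,\dots,v_m\in\R^n$ span $\Lambda$. The quotient length is $L=\sum_{i=1}^m|v_i|$ and the spanned volume $V=V(\R^n/\Lambda)$ equals the covolume of $\Lambda=\langle v_1,\dots,v_m\rangle$. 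So the theorem is equivalent to: for any $m\ge n$ vectors $v_1,\dots,v_m$ spanning a rank-$n$ lattice,
\[
  \Bigl(\sum_{i=1}^m|v_i|\Bigr)^n\;\ge\;(m-n+1)\,n^n\;\operatorname{covol}\langle v_1,\dots,v_n\rangle,
\]
with equality iff $m=n$ and the $v_i$ are an orthonormal basis up to scaling.

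First I would treat the base case $m=n$: here $\Lambda$ has basis $v_1,\dots,v_n$, covolume $|\det(v_1,\dots,v_n)|$, and Hadamard's inequality gives $\operatorname{covol}\le\prod_i|v_i|$, while AM–GM gives $\prod_i|v_i|\le\bigl(\frac1n\sum|v_i|\bigr)^n=L^n/n^n$; chaining these yields $L^n/V\ge n^n=(m-n+1)n^n$, with equality iff all $|v_i|$ are equal (AM–GM) and the $v_i$ are pairwise orthogonal (Hadamard), i.e. $\Lambda$ is similar to $\Z^n$. For the inductive/reduction step with $m>n$, I would peel off the "excess" edge vectors exactly as in the proof of Theorem~\ref{thm:dipole}: among $v_1,\dots,v_m$ choose $n$ of them, say $v_1,\dots,v_n$, that still span a full-rank sublattice (possible since the whole set spans rank $n$), and regard $e_1:=v_{n+1},\dots,e_{m-n}:=v_m$ as the extra edges. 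Arguing as in \eqref{eq:dipoleloweredge}–\eqref{eq:dipolefunction}, since removing these does not change $\operatorname{covol}\langle v_1,\dots,v_n\rangle$,
\[
  \frac{L^n}{V}\ \ge\ \frac{\bigl(L'+|e_1|+\cdots+|e_{m-n}|\bigr)^n}{\operatorname{covol}\langle v_1,\dots,v_n\rangle}
  \ \ge\ \frac{\bigl(L'+(m-n)x\bigr)^n}{V'\,x^{m-n}/\, (\text{const})},
\]
where $L'=\sum_{i=1}^n|v_i|$, $x$ the geometric mean of the $|e_j|$, and $V'=\operatorname{covol}\langle v_1,\dots,v_n\rangle$; minimizing the right-hand side over $x\in(0,\infty)$ (unique critical point $x_0=L'/(n-1)$... ) — actually here the clean route is to first apply the $m=n$ base case to get $L'^n\ge n^n V'$ and then optimize the scalar function $t\mapsto (L'+(m-n)t)^n / t^{m-n}$; its minimum over $t>0$ is attained when the $m-n$ excess edges each have length $L'/(n-1)$-analogue, producing precisely the factor $(m-n+1)$ after substituting $L'^n\ge n^nV'$. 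Equality throughout forces $m-n=0$, returning to the base case.

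The main obstacle I anticipate is the bookkeeping in the reduction step: one must be careful that the $n$ chosen vectors genuinely generate a sublattice of the same covolume contribution used in the bound — the covolume of the full rank-$n$ lattice $\Lambda$, not of some finite-index sublattice $\langle v_1,\dots,v_n\rangle$, which could be strictly larger. To handle this cleanly I would instead not fix $v_1,\dots,v_n$ as a basis of a sublattice but rather invoke the following: $V=\operatorname{covol}\Lambda\le\operatorname{covol}\langle v_1,\dots,v_n\rangle$ for any full-rank selection, so using $\operatorname{covol}\langle v_1,\dots,v_n\rangle$ in the denominator only \emph{weakens} the inequality and is therefore legitimate; in the equality case $m=n$ we automatically have $\Lambda=\langle v_1,\dots,v_n\rangle$ so no index issue arises. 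The scalar optimization and the two classical inequalities (Hadamard, AM–GM) are routine. Finally, for the equality statement one checks that $\Z^n$ with the $n$ standard basis segments through the origin realizes $L^n/V=n^n$, and that $d=2n$, $\Lambda\cong\Z^n$ is forced by tracing the equality cases backward through AM–GM, Hadamard, and the scalar minimization.
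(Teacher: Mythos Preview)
Your base case $m=n$ is fine and matches the paper's use of Hadamard plus AM--GM. The gap is in your reduction step for $m>n$. You try to mimic the peeling argument from the proof of Theorem~\ref{thm:dipole}, optimizing an expression of the form $\bigl(L'+(m-n)x\bigr)^n/\bigl(V'x^{m-n}\bigr)$. But that $x^{m-n}$ in the denominator has no source here: in Theorem~\ref{thm:dipole} the loop edges $e_j$ contribute \emph{new} lattice directions and hence a factor $|e_1|\cdots|e_{2\ell}|$ to the volume, which is what produces the $x^{2\ell}$ you can then trade against the numerator. In the present setting all $m$ vectors $v_1,\dots,v_m$ already lie in the same $n$-dimensional lattice~$\Lambda$; the excess edges $e_1,\dots,e_{m-n}$ contribute length to $L$ but contribute nothing to $V$ or to your $V'=\operatorname{covol}\langle v_1,\dots,v_n\rangle$. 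So the function you actually face is just $x\mapsto\bigl(L'+(m-n)x\bigr)^n/V'$, which has infimum $L'^n/V'\ge n^n$ at $x\to 0$, and you never recover the factor $(m-n+1)$.

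What you are missing is a lower bound on the extra edge lengths in terms of something already controlling $V$. The paper achieves this in one stroke: order the $g_i$ so that $|g_1|$ is the \emph{shortest} among all $d/2$ edge vectors and include $g_1$ among the $n$ vectors used in Hadamard's inequality. Then every excess edge satisfies $|g_i|\ge|g_1|$, so
\[
  L \;\ge\; \Bigl(\tfrac{d}{2}-n+1\Bigr)|g_1| + |g_2|+\cdots+|g_n|,
\]
and applying AM--GM to the $n$ numbers $\bigl(\tfrac{d}{2}-n+1\bigr)|g_1|,\,|g_2|,\dots,|g_n|$ together with Hadamard gives \eqref{eq:dlargertwon} directly. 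The equality analysis for $d>2n$ then falls out because the weighted AM--GM forces $|g_2|=\cdots=|g_n|=\bigl(\tfrac{d}{2}-n+1\bigr)|g_1|>|g_1|$, contradicting the minimality of $|g_1|$ used in the other inequality.
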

\begin{figure}[b]
	\centering
	\includegraphics[width=.3\linewidth]{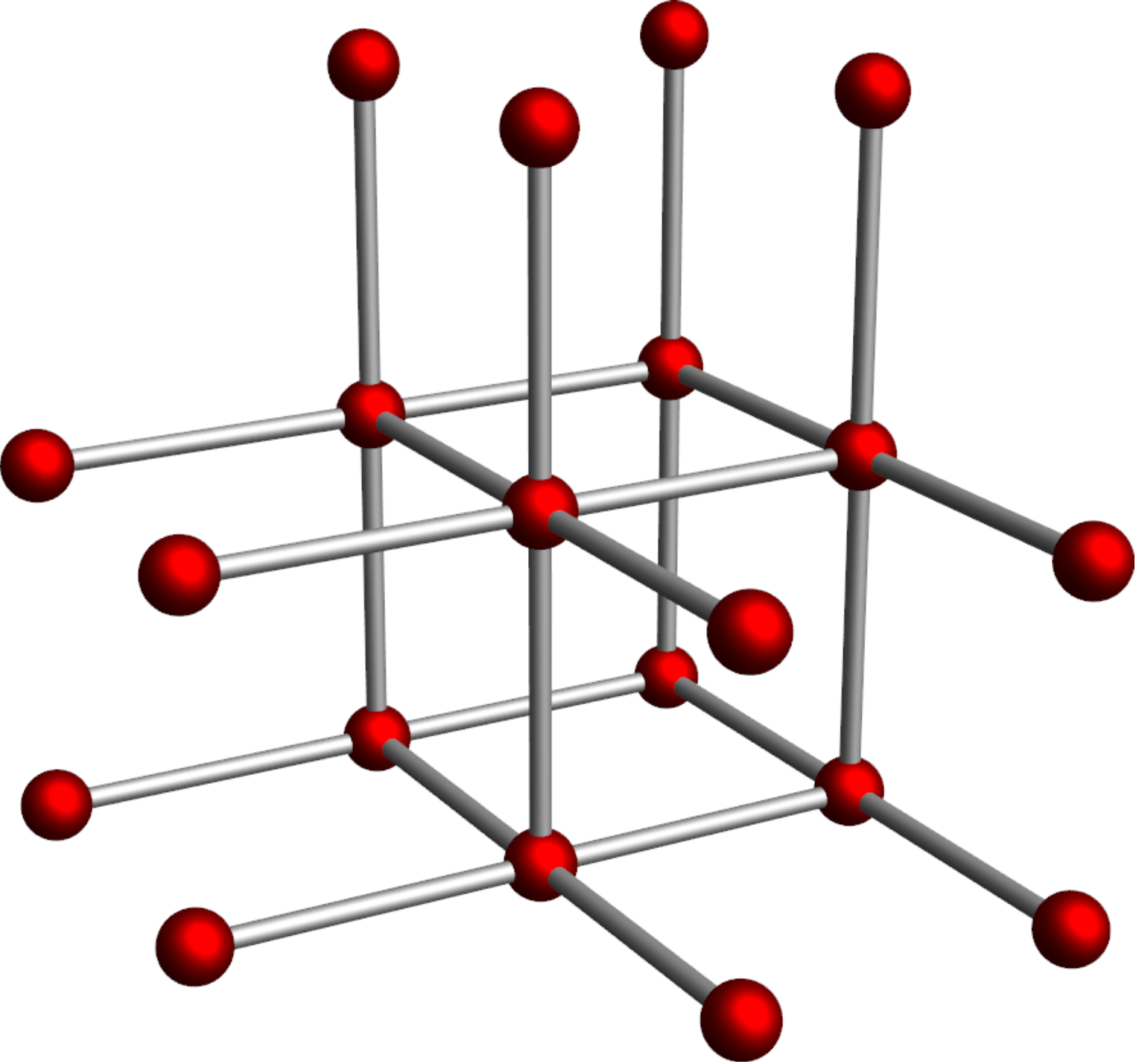}
	\caption{Among triply periodic networks of degree~$6$, 
    the \pcu\ network with quotient graph $B_3$ minimizes the length quotient. 
  %It corresponds to a tessellation of $3$-space with cubes.
  } 
	\label{fig:network6}
\end{figure}
\noindent 
For $\R^3$ this settles the case~$d=6$: 
equality is attained by the \textsf{pcu} network
which has the edge set of a tessellation of 3-space with cubes 
(see Figure~\ref{fig:network6}).
Similarly, for $n=2$, the \textsf{sql} network relating to a square 
tessellation is optimal.
The estimate \eqref{eq:dlargertwon} implies that in each dimension~$n$ 
networks with even degree $d>2n$ have a length quotient larger than
for $d=2n$.
\begin{proof}
	Pick a vertex $p_0\in N$.
	We consider its neighbours $q_1,\ldots,q_d$ and set $g_i:=q_i-p_0$.
  Since $N$ covers the graph $B_{d/2}$
  we may assume the indexing is such that 
	the $n\leq d/2$ vectors $g_1,\ldots,g_n$ span $\Lambda$,  
  that $L=\sum_{i=1}^{d/2}\vert g_i\vert$,
	and that $\vert g_1\vert=\min_{1\leq i\leq d/2}\vert g_i\vert$. 
	The inequality on geometric and arithmetic mean gives
	\begin{align}
		\begin{split}
			V & = \vert\det(g_1,\ldots,g_n)\vert 
      \leq \frac1{\frac d2-n+1}\,\Big(\frac d2 - n+1\Big)\vert g_1\vert\cdot\vert g_2\vert\cdots\vert g_n\vert\\
			&\leq\frac1{\big(\frac d2-n+1\big)n^n}\Big(\big(\frac d2 - n+1\big)\vert g_1\vert + \vert g_2\vert +\ldots+\vert g_n\vert\Big)^n\,.
		\end{split}\label{eq:degree2n1}
	\end{align}
	Moreover, we use $n\leq d/2$ and $\vert g_1\vert\leq\vert g_i\vert$
  for $i=n+1,\ldots,d/2$ to obtain
  % i.e. (d/2-n+1)|g_1| \le |g_1| + |g_{n+1}|+...+|g_{d/2}| 
	\begin{align}
		\begin{split}
			V\, \Bigl(\frac d2-n+1\Bigr)n^n 
      \leq\big(\vert g_1\vert+\vert g_2\vert+\ldots+\vert g_{d/2}\vert\big)^n
			=L^n\,.
		\end{split}\label{eq:degree2n2}
	\end{align}
	Let us show that equality cannot hold for $d\geq2n+2$.  If the second 
  inequality of~\eqref{eq:degree2n1} happens to be an equality, then
	\[
    \Big(\frac d2 - n + 1\Big)\vert g_1\vert 
    = \vert g_2\vert = \ldots = \vert g_n\vert\,.
  \]
	In particular, $\vert g_2\vert,\ldots,\vert g_{d/2}\vert$ are strictly larger
	than $\vert g_1\vert$ and equality cannot hold in~\eqref{eq:degree2n2}.
	For $d=2n$, however, 
  % the second inequality of \eqref{eq:degree2n1} and the 
  % inequality~\eqref{eq:degree2n2} are equalities.  Therefore
  equality holds if and only if $g_1,\ldots,g_n$ 
  are pairwise perpendicular and have the same length, 
  i.e., $\Lambda$ is the primitive $n$-dimensional lattice.
\end{proof}
\begin{remark}\label{rem:evenmonotone}
	The construction of the proof of Theorem~\ref{th:pcu} shows 
  the length quotient of irreducible $n$-periodic networks is strictly increasing
  when restricted to even degree~$d\geq2n$:
  Removal of an edge of the quotient network~$N/\Lambda$ and thereby degree reduction by~$2$ 
  decreases length while not affecting balancing.
\end{remark}
The theorem leaves open the case of networks with odd degree. 
We present an estimate for the length quotient for that case, 
which is weaker than~\eqref{eq:dlargertwon}:
\begin{theorem}\label{thm:highdeg}
	If $N$ be an irreducible $n$-periodic network of degree $d\geq n+1$ then
		\begin{align}\label{eq:highdeg}
			\frac{L^n}V \ge \sqrt{(n+1)^{n-1}\,n^n}\,.
		\end{align}
% For n=3 this reads L^3/V > \sqrt{4^2*3^3} = 12\sqrt{3} = 20.78
% For n=4 this reads L^4/V > \sqrt{5^3*4^4} = 80\sqrt{5} = 179.9
	Equality holds if and only if $d=n+1$ and $N$ covers the dipole graph $D_{n+1}$ 
  and for each vertex $q\in N$ the leaves of $\Star q$ 
  form the vertices of a regular $n$-simplex.
\end{theorem}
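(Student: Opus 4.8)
The plan is to reduce everything to Theorem~\ref{thm:dipole} and Proposition~\ref{prop:nsimplex}, and to show that whenever $d\geq n+2$ the estimate \eqref{eq:highdeg} is in fact \emph{strict}. By Proposition~\ref{prop:structure} an irreducible network $N$ of degree $d\geq n+1$ covers either the bouquet $B_{d/2}$ (when $d$ is even and $d\geq2n$) or a double bouquet $D_{\ell,k}$ with $2\leq k\leq d$ and $2\ell+k=d$. The case $d=n+1$ is exactly Theorem~\ref{thm:dipole}, so I assume $d\geq n+2$ from now on.

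The main tool is a \emph{dipole estimate}: if $M$ is a balanced $m$-periodic network of degree $k$ covering the dipole graph $D_k$ (so necessarily $m\leq k-1$), then $L(M)^m/V(M)\geq\sqrt{(m+1)^{m-1}m^m}$, with equality forcing $k=m+1$ and the neighbours of each vertex to form a regular $m$-simplex centred at that vertex. To prove this I fix a vertex $q\in M$ with bridge neighbours $q+u_1,\dots,q+u_k$; the lattice of $M$ is generated by the differences $u_i-u_j$, so I select $m$ independent ones, say $u_1-u_k,\dots,u_m-u_k$, and take the $m$-simplex $\Delta$ on $q+u_k,q+u_1,\dots,q+u_m$. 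Then $V(M)\leq|\det(u_1-u_k,\dots,u_m-u_k)|=m!\,\vol(\Delta)$, while $L(M)=\sum_{i=1}^k|u_i|\geq|u_k|+\sum_{i=1}^m|u_i|$, strict unless $k=m+1$; Proposition~\ref{prop:nsimplex} (in dimension $m$, origin at $q$) then finishes it, with the equality discussion as there. This uses only Proposition~\ref{prop:nsimplex}, not the theorem under proof.

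Now I dispatch the three remaining situations. (i) If $N$ covers $B_{d/2}$, fix a vertex $q$; its neighbours are $q\pm v_1,\dots,q\pm v_{d/2}$ with $v_1,\dots,v_{d/2}$ generating $\Lambda$. Choosing $n$ independent $v_i$'s and the simplex on $q,q+v_1,\dots,q+v_n$ gives $V\leq n!\,\vol(\Delta)$ and $L=\sum_{i=1}^{d/2}|v_i|\geq\sum_{i=1}^n|v_i|$; since $q$ is a vertex of this simplex, the equality case of Proposition~\ref{prop:nsimplex} (centre at the origin) is excluded and $L^n/V>\sqrt{(n+1)^{n-1}n^n}$. (ii) If $N$ covers the dipole $D_d$, this is the dipole estimate with $m=n$, $k=d$, and $d\geq n+2$ makes it strict. (iii) If $N$ covers $D_{\ell,k}$ with $\ell\geq1$, remove the edges over the loops and let $N'\subset N$ be a component; as in the proof of Theorem~\ref{thm:dipole} it is a balanced $m$-periodic network of degree $k$ covering $D_k$, with $1\leq m\leq\min(k-1,n)$, length $L'$ and $m$-volume $V'$ (balancing survives since the two edges over a loop contribute opposite unit vectors at each vertex). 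As $\Lambda$ is generated by the lattice of $N'$ together with the $2\ell$ loop vectors, I complete a basis of the former by $n-m$ loop vectors $c_1,\dots,c_{n-m}$, which gives $V\leq V'\,|c_1|\cdots|c_{n-m}|$ and $L\geq L'+|c_1|+\dots+|c_{n-m}|$. An arithmetic--geometric mean step and then minimising $\frac{(L'+(n-m)x)^n}{V'x^{n-m}}$ over $x>0$ (minimum at $x=L'/m$) yield $L^n/V\geq\frac{n^n}{m^m}\cdot\frac{(L')^m}{V'}$; combined with the dipole estimate this gives $L^n/V\geq n^n\sqrt{(m+1)^{m-1}m^{-m}}$.

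Finally I must check $n^n(m+1)^{m-1}m^{-m}\geq(n+1)^{n-1}$ for $1\leq m\leq n$, with equality only at $m=n$; taking logarithms this is $\Phi(m)\geq\Phi(n)$ for $\Phi(x):=(x-1)\ln(x+1)-x\ln x$, and $\Phi$ is strictly decreasing on $[1,\infty)$ since $\Phi'(x)=\ln(1+1/x)-\frac{2}{x+1}<0$ there (substitute $t=1/x$: the function $\ln(1+t)-\frac{2t}{1+t}$ vanishes at $t=0$ and has negative derivative for $t\in(0,1)$). Hence case (iii) is strict when $m<n$; and when $m=n$ no loop length enters the volume bound, so $V\leq V'$, yet the $\ell\geq1$ loops still add positive length, giving $L^n/V>(L')^n/V'\geq\sqrt{(n+1)^{n-1}n^n}$. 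So \eqref{eq:highdeg} is strict for every $d\geq n+2$, and equality forces $d=n+1$, the case settled by Theorem~\ref{thm:dipole}. I expect case (iii) to be the main obstacle: one has to keep careful track of which cycles of $D_{\ell,k}$ descend to independent lattice generators of the lower-dimensional $N'$ and which survive only as loop vectors, and in particular handle the degenerate sub-case $m=n$, where removing the loops does not lower the dimension.
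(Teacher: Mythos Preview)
Your proof is correct, and the core of case~(iii) is essentially the argument from the proof of Theorem~\ref{thm:dipole} (remove the loops, apply Proposition~\ref{prop:nsimplex} to the bridge simplex, then the arithmetic--geometric mean for the loop lengths). However, your route differs from the paper's in two respects worth noting. First, for the bouquet case $B_{d/2}$ the paper invokes Theorem~\ref{th:pcu} to obtain $L^n/V\ge n^n$ and then checks $n^n>\sqrt{(n+1)^{n-1}n^n}$ via $(1+1/n)^n<3$; you instead apply Proposition~\ref{prop:nsimplex} directly to the simplex on $q,q+v_1,\dots,q+v_n$ and observe that equality is impossible because the origin is a \emph{vertex} of this simplex, not its centre. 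This makes your argument independent of Theorem~\ref{th:pcu}. Second, for the double bouquet case the paper first passes to a minimal $n$-periodic subnetwork $N'\subset N$ (removing edges until any further removal disconnects), so that $N'/\Lambda$ has exactly $n$ independent cycles; it then treats the degenerate single-bridge case $k'=1$ separately, again via Theorem~\ref{th:pcu}, and for $k'\ge 2$ reuses verbatim the proof of Theorem~\ref{thm:dipole}. You instead remove all loop edges at once, allow the resulting bridge subnetwork to be $m$-periodic for any $1\le m\le n$, and handle the degenerate sub-case $m=n$ directly by the strict length drop $L>L'$. You also prove the monotonicity in $m$ explicitly via $\Phi$, whereas the paper simply asserts that the right-hand side of~\eqref{eq:dipolek} is strictly decreasing in~$k$. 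Your version is a bit more self-contained (it uses only Proposition~\ref{prop:nsimplex} and Theorem~\ref{thm:dipole}); the paper's version recycles more of the earlier machinery.
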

\noindent
For $n=3$ we will obtain a stronger estimate in Corollary~\ref{cor:highdegthree}.
% there: L^3/V > 405/8 = 50.625 , which is much better!
\begin{proof}
	For $n\geq2$ and even degree $d\geq2n$, Theorem~\ref{th:pcu} gives
	\[\frac{L^n}V\geq\Big(\frac d2-n+1\Big)n^n\geq n^n\,.\]
	On the other hand, invoking $(1+1/n)^n < 3$ and $n\ge 2$ gives
  \[
    n^n > n^n \sqrt{\frac13\Big(1+\frac1n\Big)^n}
    \ge\sqrt{\frac1{n+1}\,{(n+1)^n}\,n^n}\,,
  \]
	which implies~\eqref{eq:highdeg} with strict inequality. 
  % in fact, estimating with e in place of 3, 
  % we get the inequality with an extra factor of \sqrt{\frac{n+1}e}<1

  For all other $d\geq n+1$, Proposition~\eqref{prop:structure} identifies 
  the topology of $N/\Lambda$ as a double bouquet graph $D_{\ell,k}$,
  where $k\geq2$.
	Pick an $n$-periodic subnetwork $N'\subset N$ (with the same lattice) 
  subject to the following property: 
  The removal of any edge from $N'/\Lambda$ disconnects the covering network~$N'$. 
  Then $N'/\Lambda$ decomposes into two bouquet graphs 
  $B_{\ell_1}$ and $B_{\ell_2}$, connected with $1\leq k'\leq k$ edges. 
  This graph contains exactly $\ell_1+\ell_2+(k'-1)=n$ cycles. 

	In case $k'=1$ of one bridge the network $N'$ contains $\ell_1+\ell_2=n$ loops. 
  Keeping the lattice, 
  we can decrease the length of the bridge to~$0$, to obtain from~$N'$
  a network~$N''$ of smaller length, covering the bouquet graph~$B_n$ of degree~$2n$. 
  Applying Theorem~\eqref{th:pcu} yields~\eqref{eq:highdeg} for~$N''$.
  % Actually the estimate holds strictly.
  % L^n/V \ge (n-n+1) n^n = n^n
  % need to show this is > \sqrt{(n+1)^{n-1}*n^n}
  % Indeed: <==> n^{2n} > (n+1)^{n-1} n^n
  %         <==>    n^n > (n+1)^{n-1}
  %         <==>      1 > {1/n} * (1+1/n)^{n-1} 
  % and 1/n --> 0 as well as  (1+1/n)^{n-1} < (1+1/n)^n < e
  % so ok for n\ge 3, and also ok for n=2
  Due to $L(N)\geq L(N')> L(N'')$ the estimate~\eqref{eq:highdeg} 
  with strict inequality follows for~$N$.
 
  In the other case $k'\geq2$, let us first assume $d=n+1$. 
  Since~$N$ has exactly two vertices, 
  by~\eqref{eq:numbervertices} the removal of any edge disconnects the network. So~$N'=N$.
  Then we can apply the reasoning of the proof of Theorem~\ref{thm:dipole} to~$N$,   
  replacing $2\ell$ by $\ell_1+\ell_2$ and taking $k'$ for~$k$. 
  This yields the estimate~\ref{eq:highdeg} for $N$ 
  and characterizes the equality case as claimed.
  If, on the other hand, $d>n+1$
  the subnetwork $N'$ is obtained by removing at least one edge from~$N$.   
  Thus $L(N)>L(N')$.
  Moreover, as the estimate~\eqref{eq:highdeg} 
  is established for $N'$ with $d=n+1$, 
  it follows for $N$ with strict inequality.
\end{proof}
\pagebreak[4]

%%%%%%%%%%%%%%%%%%%%%%%%%%%%%%%%%%%%%%%%%%%%%%%%%%%%%%%%%%%%%%%%%%%%%%%%%%%%%%%
\section{Triply periodic networks of degree 4}\label{sec:degfour}

\begin{figure}[b]
	\centering
	\begin{subfigure}[t]{.52\linewidth}
		\centering
		\includegraphics[width=\linewidth]{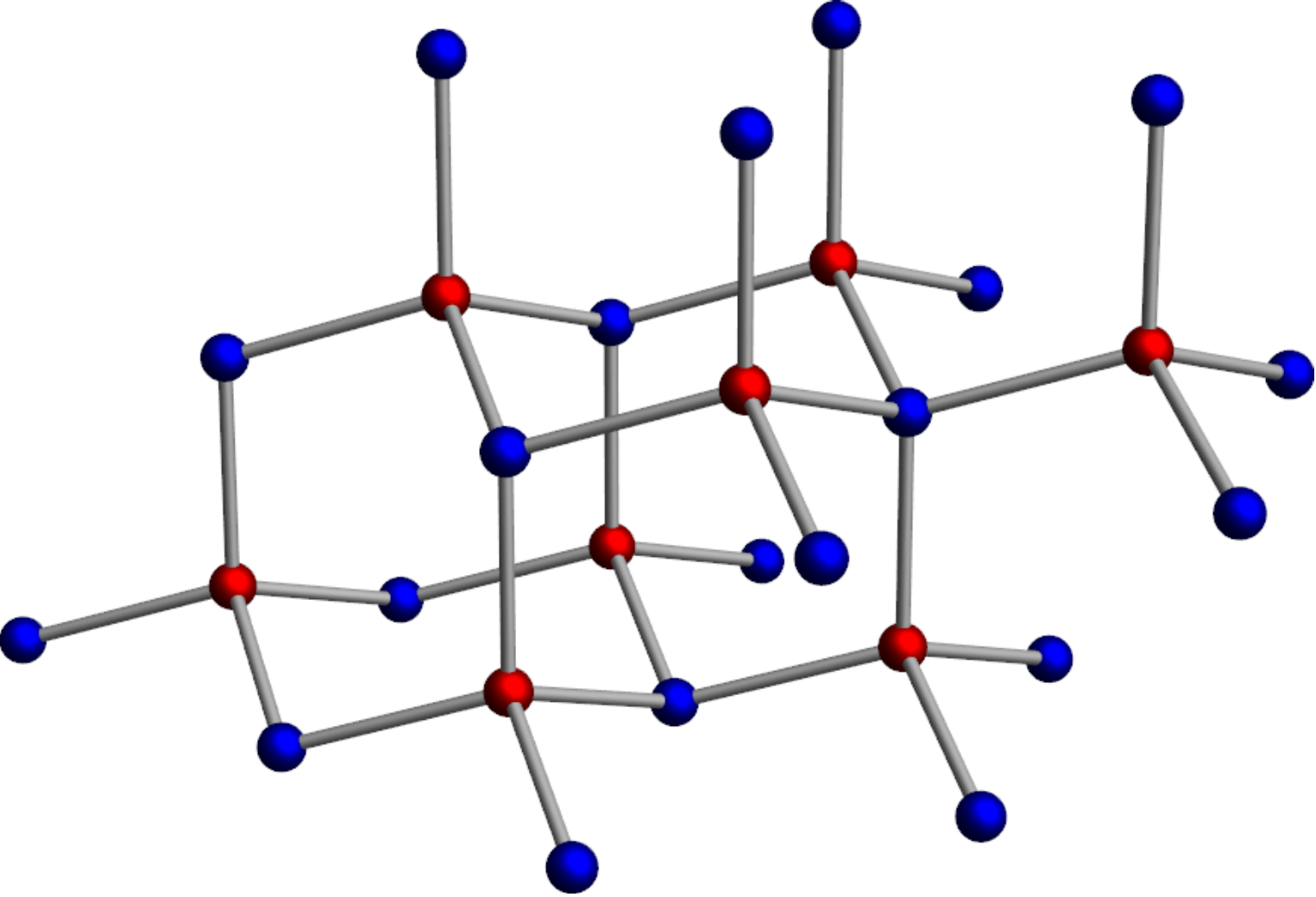}
	\end{subfigure}\hfill
	\begin{subfigure}[t]{.46\linewidth}
		\centering
		\includegraphics[width=.57\linewidth]{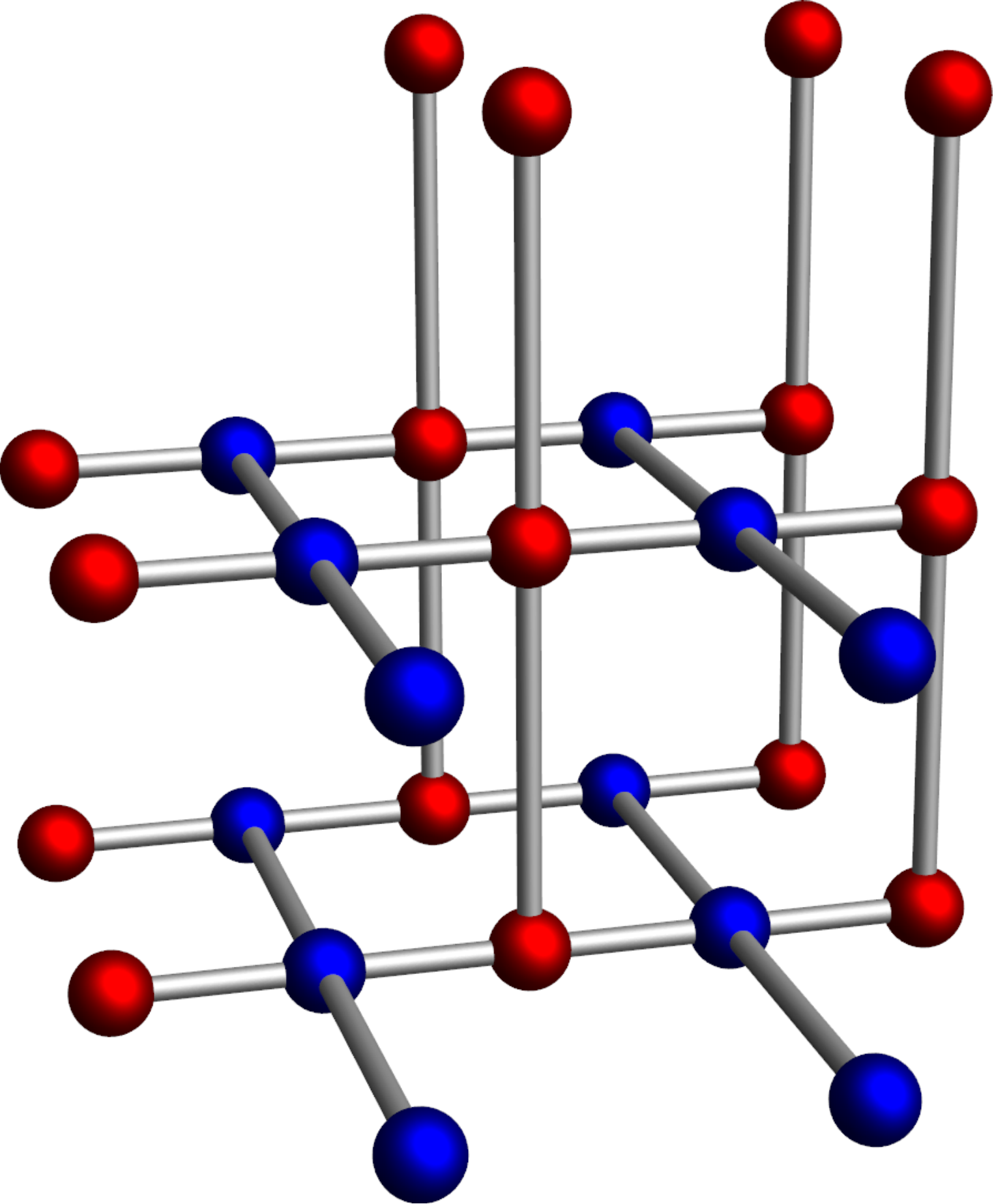}
	\end{subfigure}
	\caption{Among irreducible triply periodic networks of degree~$4$, the diamond network shown left minimizes the length quotient; it covers the dipole graph~$D_4$. 
The other graph with degree~4 on two vertices 
is $D_{1,2}$;
a minimizing \cds\ network is shown on the right.}
	\label{fig:network4}
\end{figure} 

In the remainder of the paper, 
we study specifically the case of three dimensions.
By Proposition~\ref{prop:structure}, 
an irreducible triply periodic network of degree $d=4$ 
must have a quotient with two vertices and four edges 
which is either the dipole graph~$D_4$ 
or the double bouquet graph~$D_{1,2}$. 
Theorem~\ref{thm:dipole} asserts the absolute minimizer 
for the length quotient $L^3/V$ with degree~$4$ 
covers~$D_4$ and is the \emph{diamond} network \dia; 
it is uniquely determined up to similarities of~$\R^3$. 
This is included as part \ref{item:dia} of the following statement,
while part \ref{item:cds} determines the optimal embedding covering~$D_{1,2}$.
See Figure~\ref{fig:network4} as well as Figures~\ref{fig:dia} and \ref{fig:cds}.

\begin{theorem}\label{thm:diacds}
  Let $N\subset\R^3$ be an irreducible triply periodic network of degree~$4$.
  \begin{enumerate}
		\item\label{item:dia} If $N$ covers $D_4$, then
		\begin{align}\label{eq:dia}
			\frac{L^3}V\geq12\sqrt3\approx20.8\,.
		\end{align}
		Equality holds if and only if all edge lengths of $N$ are equal 
		and the lattice~$\Lambda$ is face-centered cubic, i.e.,
    precisely for the diamond network \dia.
		\item\label{item:cds} If $N$ covers $D_{1,2}$, then
		\begin{align}
			\frac{L^3}{V}\geq27\,.
			\label{eq:cds}
		\end{align}
		Up to similiarity, equality is attained by a $1$-parameter family 
    of networks with primitive lattice $\Lambda$; 
    we label these networks \cds.
	\end{enumerate}
\end{theorem}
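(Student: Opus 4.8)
For part~\ref{item:dia} I would simply invoke Theorem~\ref{thm:dipole} with $n=3$: it yields $L^3/V\ge\sqrt{4^2\cdot3^3}=12\sqrt3$ for every irreducible degree-$4$ network covering $D_4$, with equality exactly when each vertex is the centre of symmetry of the regular tetrahedron spanned by its four neighbours. The only thing left is to recognise this equality configuration as the diamond network: under $D_4$ the four neighbours of a vertex are lattice translates of the other vertex, so demanding that they form a regular tetrahedron centred at the vertex forces the four incident edges to have equal length and fixes their directions up to a global rotation; the lattice generated by the four associated cycles is then forced to be face-centred cubic. I would present this as a short explicit computation, after which \eqref{eq:dia} and its equality statement follow.

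For part~\ref{item:cds} the plan is to read the metric data of $N$ off the graph $D_{1,2}$ and then chain a few classical inequalities. Write the two quotient vertices as $p,q$. Lift the loop at $p$ to an edge from $p$ to $p+v$ (so the star of $p$ also contains the edge to $p-v$), the loop at $q$ to an edge from $q$ to $q+w$, and the two bridges to edges from $p$ to $q$ and from $p$ to $q+u$, where $u$ is the lattice vector realised by the bridge cycle. Arguing as in the proof of Theorem~\ref{thm:dipole}, the three homology cycles of $D_{1,2}$ lift to a basis $u,v,w$ of $\Lambda$; hence $V=\lvert\det(u,v,w)\rvert$, while $L=\lvert p-q\rvert+\lvert q+u-p\rvert+\lvert v\rvert+\lvert w\rvert$.

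Now three estimates finish the proof. The broken path $q\to p\to q+u$ has length at least $\lvert u\rvert$, so $L\ge\lvert u\rvert+\lvert v\rvert+\lvert w\rvert$; the inequality between arithmetic and geometric means gives $(\lvert u\rvert+\lvert v\rvert+\lvert w\rvert)^3\ge27\,\lvert u\rvert\lvert v\rvert\lvert w\rvert$; and Hadamard's inequality gives $\lvert\det(u,v,w)\rvert\le\lvert u\rvert\lvert v\rvert\lvert w\rvert$. Chaining these yields $L^3/V\ge27$. For equality I need simultaneously that $p$ lie on the open segment from $q$ to $q+u$, that $\lvert u\rvert=\lvert v\rvert=\lvert w\rvert$, and that $u,v,w$ be pairwise orthogonal; the last two conditions say precisely that $\Lambda$ is similar to the primitive lattice $\mathbb Z^3$. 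Fixing such a lattice and a basis $u,v,w$ (forced to be a signed permutation of an orthonormal frame), the only residual freedom is the position of $p$ on the edge from $q$ to $q+u$, which produces a one-parameter family, the \cds\ networks (parametrised by $\alpha\in(0,1)$, with $\alpha$ and $1-\alpha$ giving congruent networks); I would check that each member has embedded vertex stars and is balanced, in accordance with Proposition~\ref{prop:equilibrium}, the loop at each vertex contributing no force and the two collinear bridge directions cancelling.

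I do not anticipate a serious obstacle. The points requiring care are the bookkeeping of the $D_{1,2}$-cover---which cycle is realised by which lattice vector, and that the three cycles give a basis of $\Lambda$ rather than a proper sublattice---and the verification that the equality locus is genuinely one-dimensional up to similarity (in particular that the basis vectors cannot be a non-primitive orthogonal triple). Beyond that, both parts reduce to the triangle inequality, the arithmetic--geometric mean inequality, Hadamard's inequality, and the identification of the two named networks.
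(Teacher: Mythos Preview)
Your proposal is correct and matches the paper's proof essentially line for line. Part~\ref{item:dia} is handled in the paper by the single sentence ``It remains to prove~\ref{item:cds}'', i.e.\ by citing Theorem~\ref{thm:dipole} with $n=3$, exactly as you do. For part~\ref{item:cds} the paper chooses the same three lattice generators (your $u,v,w$ are their $g_3,g_1,g_2$), applies Hadamard's inequality, the arithmetic--geometric mean inequality, and the triangle inequality for the broken two-bridge path, in that order, and reads off the same equality conditions and one-parameter family; the only difference is notation and the order in which the three estimates are written down.
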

\noindent
In Figure~\ref{fig:network4} the blue vertices are placed exactly 
in the middle between the red vertices.
The one-parameter family of \cds-networks corresponds to 
translating the set of blue vertices on their edges to the red vertices.
Clearly, this leaves the length invariant.
We note that the depicted network coincides with the unique minimizer 
of the energy~$\big(\sum x_i^2\big)^{3/2}/V$ studied by Sunada
(see~\cite{sunada2012topological}).

	\begin{figure}
		\centering
		\begin{subfigure}[c]{.42\linewidth}
			\centering
			\begin{tikzpicture}[x=3cm,y=3cm]
				\coordinate[dot1](p) at (0,0);
				\coordinate[dot2](q) at (1,0);
				\draw[steiner1](p) to [out=80,in=100] (q);
				\draw[steiner2](p) to [out=30,in=150] (q);
				\draw[steiner3](p) to [out=-30,in=210] (q);
				\draw[steiner4](p) to [out=-80,in=260] (q);
			\end{tikzpicture}
		\end{subfigure}
		\begin{subfigure}[c]{.38\linewidth}
			\centering
			\begin{tikzpicture}[x=.8cm,y=.8cm]
				\coordinate[dot1](p0) at (0,0);
				\coordinate[dot2](q1) at (-2,0);
				\coordinate[dot2](q2) at (1,2);
				\coordinate[dot2](q3) at (2,-1);
				\coordinate[dot2](q4) at (-1,-2);
				\draw[steiner1](p0) -- node[below,pos=.55]{$x_1$} (q1);
				\draw[steiner2](p0) -- node[right]{$x_2$} (q2);
				\draw[steiner3](p0) -- node[above,pos=.6]{$x_3$} (q3);
				\draw[steiner4](p0) -- node[right,pos=.6]{$x_4$} (q4);
				\node[left ] at (q1) {$q_1$};
				\node[above] at (q2) {$q_2$};
				\node[right] at (q3) {$q_3$};
				\node[below] at (q4) {$q_4$};
				\node[above left] at (p0) {$p_0$};
			\end{tikzpicture}
		\end{subfigure}
		\caption{Topology and embedding of the dipole graph $D_4$.}
		\label{fig:dia}
	\end{figure}
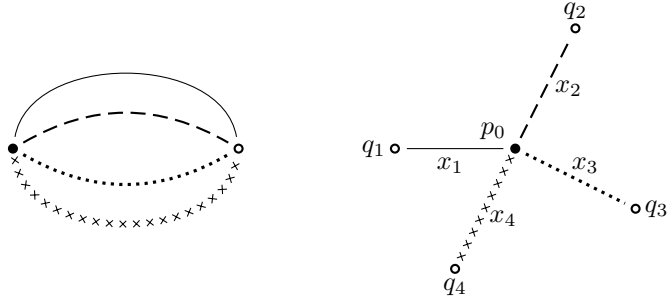
\begin{proof}
	It remains to prove \ref{item:cds}.  
	We take a subgraph of $N$ consisting of two adjacent vertices $p_1,q_1$ 
	and their neighbours $p_2,p_3,q_2$ so that the vertices $p_1,p_2,p_3$ 
	and the vertices $q_1,q_2$ are identified in the lattice~$\Lambda$,
	see Figure~\ref{fig:cds}.  
	\begin{figure}[b]
		\centering
		\begin{subfigure}[t]{.55\linewidth}
			\centering
			\begin{tikzpicture}[scale=2.5]
				\coordinate[dot2](p) at (0,0);
				\node[below,yshift=-1ex] at (p) {$p$};
				\coordinate[dot1](q) at (.75,0);
				\node[below,yshift=-1ex] at (q) {$q$};
				\draw[steiner1](p) to [out=45,in=135] (q);
				\draw[steiner2](p) to [out=-45,in=225] (q);
				\draw[steiner3](p) to [out=135,in=225,looseness=25] (p);
				\draw[steiner4](q) to [out=45,in=-45,looseness=25] (q);
			\end{tikzpicture}
		\end{subfigure}
		\hfill
		\begin{subfigure}[t]{.43\linewidth}
			\centering
			\begin{tikzpicture}[scale=1.5]
				\coordinate[dot2](p1) at (0,0);
				\coordinate[dot1](q1) at (0,1);
				\coordinate[dot2](p2) at (1,0);
				\coordinate[dot1](q2) at (1,1.5);
				\coordinate[dot2](p3) at (0,1.7);
				\node[below right] at (q1) {$q_1$};
				\node[right] at (q2) {$q_2$};
				\node[below right] at (p1) {$p_1$};
				\node[right] at (p2) {$p_2$};
				\node[above] at (p3) {$p_3$};
				\draw[steiner2]($(p1) - .5*(q1)$) -- (p1);
				\draw[steiner1](p1) -- node[left] {$x_4$} (q1);
				\draw[steiner3]($(p1) - .6*(p2)$) -- (p1) -- node[above] {$x_1$} (p2);
				\draw[steiner4]($1.6*(q1) - .6*(q2)$) -- (q1) -- node[above,yshift=0.5ex] {$x_2$} (q2);
				
				\draw[steiner2](q1) -- node[left]{$x_3$} (p3);
			\end{tikzpicture}
		\end{subfigure}
		\caption{Topology and embedding of the double bouquet graph $D_{1,2}$.}
		\label{fig:cds}
	\end{figure}
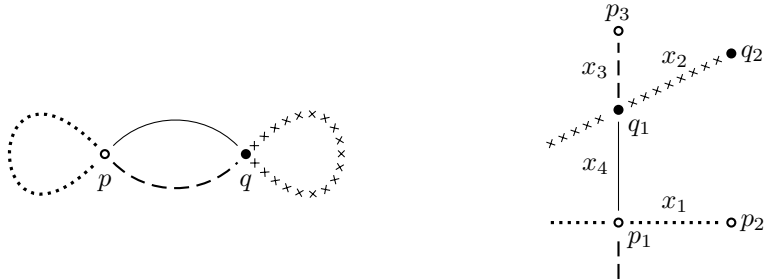
	The lattice $\Lambda$ is generated by the lift of three loops of $D_{1,2}$, 
  and so can be generated by 
	\begin{align*}
		g_1 &:= p_2 - p_1\,, & g_2 &:= q_2 - q_1\,, & g_3 &:= p_3 - p_1.
	\end{align*}
	The enclosed volume of $N$ can be estimated by
	\begin{align*}
		V(\R^3/\Lambda) &= \vert\det(g_1,g_2,g_3)\vert\\
		&\leq\vert g_1\vert\cdot\vert g_2\vert\cdot\vert g_3\vert\\
		&\leq\frac{1}{27}\big(\vert g_1\vert+\vert g_2\vert+\vert g_3\vert\big)^3\\
		&\leq\frac{1}{27}\big(\vert p_2 - p_1\vert + \vert q_2 - q_1\vert + \vert p_3 - q_1\vert + \vert q_1 - p_1\vert\big)^3\\
		&=\frac{1}{27}L^3(N/\Lambda)\,.
	\end{align*}
	Equality holds if and only if the $g_i$'s are pairwise perpendicular, 
  have the same length 
  and $q_1$ lies on the straight segment between $p_1$ and~$p_3$. 
	This implies the lattice is primitive and
  the edge lengths~$x_i$ given as in Figure~\ref{fig:cds} 
	satisfy $x_1=x_2=x_3+x_4>0$.
	In particular, equality for a fixed volume constraint $V=1$ 
  is attained by a 1-parameter family, parameterized by $x_4\in(0,x_2)$, say.
\end{proof}
	A \cds\ network with $x_3=x_4$ is shown in Figure~\ref{fig:network4}.
	In the two limits $x_3\to0$ and $x_4\to0$ the \cds\ network 
	degenerates to the \pcu\ network of degree~$6$.

%%%%%%%%%%%%%%%%%%%%%%%%%%%%%%%%%%%%%%%%%%%%%%%%%%%%%%%%%%%%%%%%%%%%%%%%%
\section{Triply periodic networks of degree 5}\label{sec:degfive}

Determining an optimal network of degree $5$ is more difficult 
than the case of degree~$4$.  This is due to the fact that 
an irreducible quotient graph~$\Gamma$ has $5$~edges and so
its fundamental group is generated by $4$~elements.  
Thus, one of the generators for $N$ 
must be contained in the lattice generated by the other three. 
This presents an integer constraint for our length optimization problem.
\begin{figure}[b]
	\centering
	\begin{subfigure}{.49\linewidth}
		\centering
		\includegraphics[width=\linewidth]{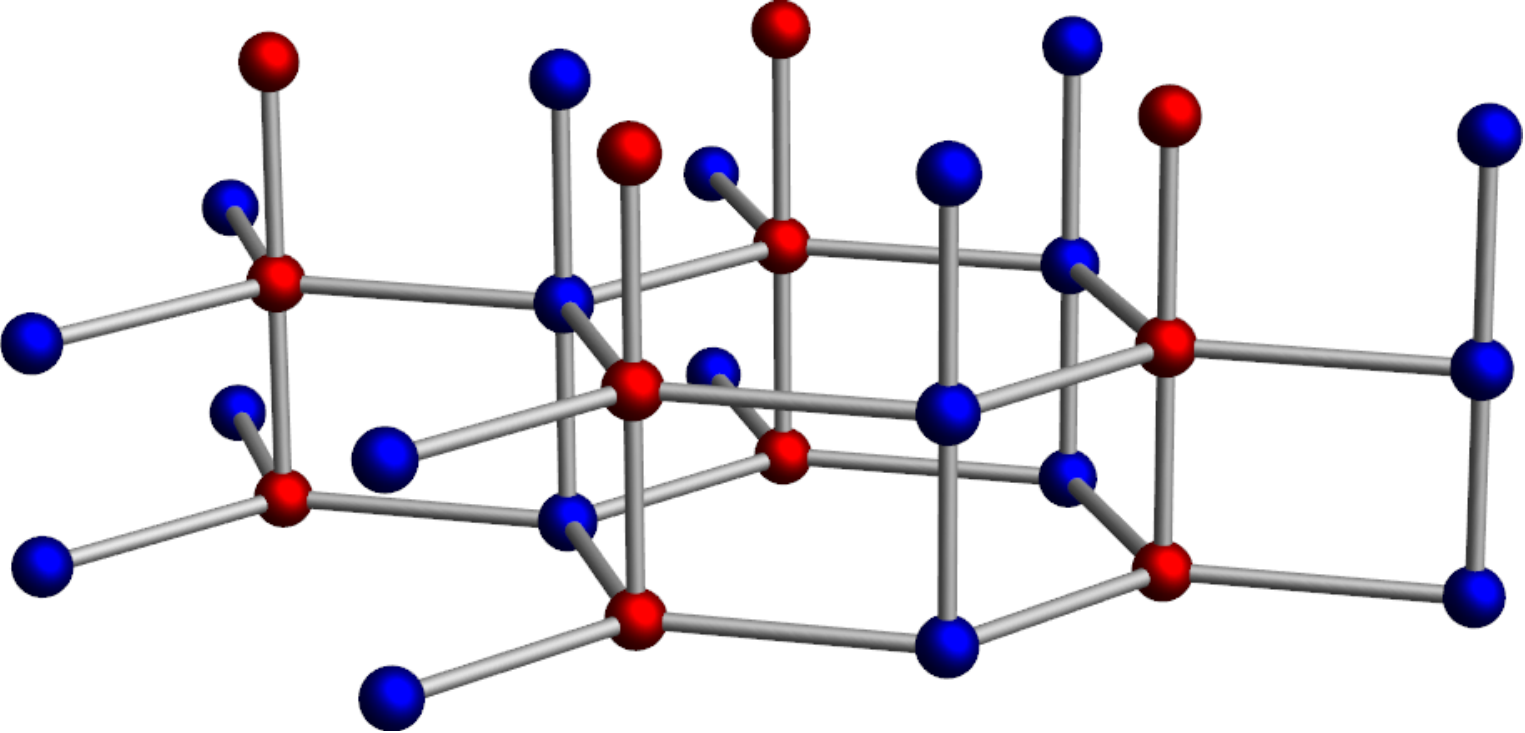}
	\end{subfigure}\hfill
	\begin{subfigure}{.49\linewidth}
		\centering
		\includegraphics[width=.7\linewidth]{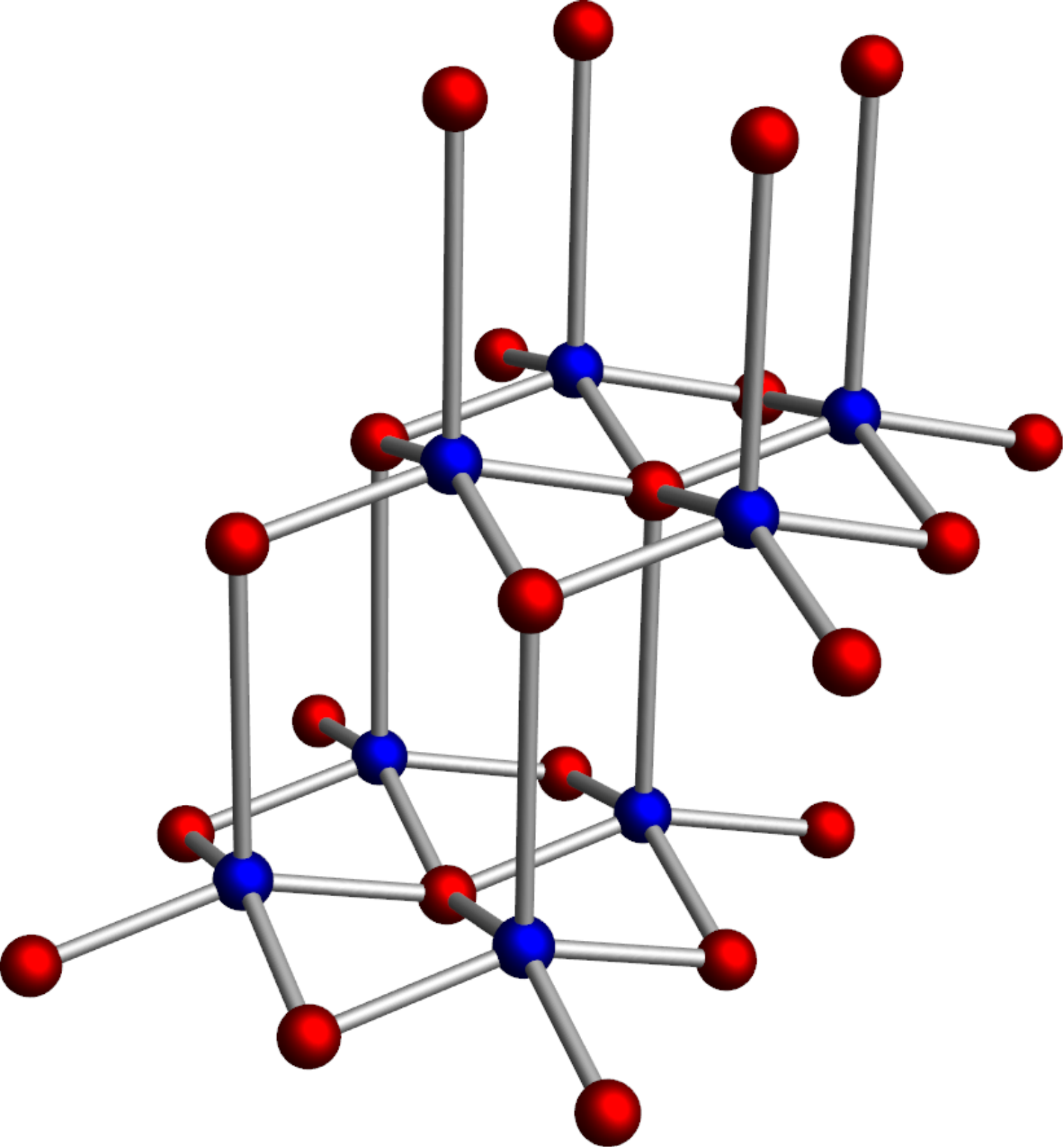}
	\end{subfigure}
	\caption{Among irreducible triply periodic networks of degree~$5$, 
  the \bnn\ network with quotient $D_{1,3}$ minimizes the length quotient (left). 
The other possible quotient is the dipole graph $D_5$, 
for which the \sqp\ network minimizes (right).}
	\label{fig:network5}
\end{figure}

According to Proposition~\ref{prop:structure}, 
an irreducible network of degree~$5$ 
can only attain the topologies $D_5$ or~$D_{1,3}$
depicted in Figure~\ref{fig:bnn} and \ref{fig:sqp}.
The network with smallest length quotient turns out to be 
a network covering~$D_{1,3}$ which we call \bnn. 
It corresponds to the edges of a tessellation of~$\R^3$ with hexagonal prisms, 
i.e., it contains parallel layers 
of minimizing doubly periodic hexagonal networks, see Figure~\ref{fig:network5}. 
\begin{theorem}\label{thm:bnn}
	If $N$ is an irreducible triply periodic network of degree $5$
  covering $D_{1,3}$ then
	\begin{align}\label{eq:bnn}
		\frac{L^3}{V}\geq27\sqrt3\approx46.8\,.
	\end{align}
	In the equality case, $N$ is the \bnn\ network with a hexagonal lattice:
  % Bravais lattice according to en.wikipedia.org/wiki/Crystal_structure
  the network consists of prismatic honeycombs over regular hexagons,
  where the prism height equals $3/4$ of the hexagon edge length.
\end{theorem}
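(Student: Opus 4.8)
The plan is to decompose the network along the line transverse to the plane spanned by the bridge periods, and to reduce everything to one volume estimate, one bridge-length estimate (both feeding into a one-variable minimisation), plus a case distinction governed by the single integral relation among the four homology generators.

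First I would fix notation. Lift $D_{1,3}$ so that the two vertices are $0$ and some $q\in\R^3$; let $a\in\Lambda$ carry the loop at $0$, $b\in\Lambda$ the loop at $q$, and let the three bridges run from $0$ to $q$, $q+w_2$, $q+w_3$ with $w_2,w_3\in\Lambda$. Then $\Lambda=\langle a,b,w_2,w_3\rangle$ has rank $3$, which produces the one integral relation (the ``integer constraint''), and
\[
  L=|a|+|b|+|q|+|q+w_2|+|q+w_3|.
\]
A short argument using that $N$ is immersed shows $w_2,w_3$ are independent; put $W:=\langle w_2,w_3\rangle$, $P:=\operatorname{span}_{\R}(w_2,w_3)$, $V_W:=\area(P/W)$, and let $\pi\colon\R^3\to P^{\perp}\cong\R$ be the orthogonal projection, so $\pi(\Lambda)=\Z h_0$ with $h_0>0$ (as $\Lambda$ is $3$-dimensional) and
\[
  V=h_0\cdot\area\!\bigl(P/(\Lambda\cap P)\bigr)\le h_0V_W,
\]
since $\Lambda\cap P\supseteq W$. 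Writing $q=q_P+q_\perp$, the points $q_P,q_P+w_2,q_P+w_3$ span a triangle of area $V_W/2$, so Proposition~\ref{prop:nsimplex} (case $n=2$, base point the origin) gives $\bigl(|q_P|+|q_P+w_2|+|q_P+w_3|\bigr)^2\ge2\sqrt3\,V_W$, with equality iff that triangle is equilateral with centre $0$; and $|q+w_i|\ge|q_P+w_i|$ (equality iff $q\in P$), so the bridge length $\ell:=|q|+|q+w_2|+|q+w_3|$ satisfies $\ell^2\ge2\sqrt3\,V_W$.

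The main case is $\pi(a)\ne0$ and $\pi(b)\ne0$: then $\pi(a)=mh_0$, $\pi(b)=nh_0$ with $m,n\ne0$, hence $|a|\ge h_0$, $|b|\ge h_0$ and $|a|+|b|\ge2h_0$. Feeding $V\le h_0V_W\le h_0\ell^2/(2\sqrt3)$ into $L^3/V$ gives
\[
  \frac{L^3}{V}\ \ge\ \frac{2\sqrt3\,(2h_0+\ell)^3}{h_0\,\ell^2},
\]
and minimising over the single ratio $h_0/\ell$ returns the value $27\sqrt3$, attained at $h_0=\ell/4$. Chasing equality through all the steps forces $a,b\perp P$ with $|a|=|b|=h_0$, $\Lambda\cap P=W$, $q\in P$, the bridge triangle equilateral with centre $0$, and $h_0=\tfrac14\ell$; writing $\ell=3a_0$, this is exactly the prismatic regular-honeycomb network \bnn\ with prism height $\tfrac34a_0$, as claimed.

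It then remains to dispose of the degenerate cases where a loop lies in $P$, say $\pi(b)=0$ (the case $\pi(a)=0$ being symmetric), so $b\in\Lambda\cap P$ and, by rank, $\pi(a)\ne0$, $h_0=|\pi(a)|\le|a|$. If $b\notin W$ then $\Lambda\cap P\supseteq\langle W,b\rangle$ has index $\ge2$ over $W$, so $V\le|a|V_W/2$ and the same optimisation with this sharper bound yields $L^3/V>27\sqrt3$. The remaining sub-case $b\in W$ is the main obstacle: here $\Lambda\cap P=W$, only $V\le|a|V_W$ is available, and discarding $|b|$ gives merely $L^3/V\ge\tfrac{27\sqrt3}{2}$. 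To recover the full constant I would exploit the geometry of the rank-$2$ lattice $W$: from $|b|\ge\lambda_1(W)$, $\ell\ge\lambda_2(W)$ and $\ell^2\ge2\sqrt3\,V_W$ one sees that a short $b$ forces $\lambda_1(W)$ small, hence forces either $V_W$ (and so $V$) small or $\lambda_2(W)$ (and so $\ell$) large; turning this trade-off into a clean strict inequality beating $27\sqrt3$ is the delicate point, and it is where a balancing refinement --- which pins $q$ to $P$ and the bridge tripod to $120^\circ$ angles, so that the simplex inequality can be sharpened --- is the natural tool. Since every degenerate sub-case is strict, equality in the theorem occurs only in the main case, i.e.\ for the \bnn\ network.
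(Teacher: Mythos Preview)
Your overall architecture matches the paper's: set up the plane $P$ spanned by the bridge periods, treat the case where both loop vectors are transverse to $P$ as the main (sharp) case, and dispose of the case where one loop vector lies in $P$ as a degenerate (strict) case. Your main case is carried out correctly and yields exactly the paper's Case~2 computation, with the same equality characterisation.

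The genuine gap is the degenerate subcase $b\in W$, which you yourself flag as ``the delicate point'' and do not finish. The sketch via successive minima $\lambda_1(W),\lambda_2(W)$ does not obviously close: a short $b$ and a long thin $W$ can coexist without forcing $\ell$ large enough to beat $27\sqrt3$. The paper handles precisely this subcase (its Case~1, $q_4\in P$) by first reducing to a balanced network, so that the three bridge edges meet at $120^\circ$; then $b\in W=\Lambda\cap P$ gives $|b|\ge\min_{i\neq j}|q_i-q_j|$, and in the $120^\circ$ geometry one computes explicitly $|q_i-q_j|\ge\tfrac{\sqrt3}{2}(x_i+x_j)$. Feeding this lower bound into a one-variable optimisation yields
\[
  \frac{L^3}{V}\ \ge\ \frac{27(2+\sqrt3)}{2}\ \approx\ 50.4\ >\ 27\sqrt3.
\]
So the ``balancing refinement'' you allude to is exactly the right tool, but the argument has to be carried through; the successive-minima heuristic alone does not deliver the bound.

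A second, smaller point: your claim that immersion of $N$ forces $w_2,w_3$ to be $\R$-independent is not justified as stated --- the three bridge endpoints can be distinct and the star embedded even when $w_2,w_3$ are parallel. The paper sidesteps this by reducing to a balanced network at the outset: if the three bridge endpoints are collinear, the Fermat point coincides with one of them, the length-decreased network covers $B_4$, and Theorem~\ref{th:pcu} gives $L^3/V\ge54$ directly. You should add this reduction before setting up~$P$.
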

\begin{proof}
	Consider two vertices, labelled $p_0,q_1\in N$,
	which project to the two distinct vertices $p,q$ of~$D_{1,3}$. %$N/\Lambda$. 
	Consider first the neighbours of the point~$p_0$, see Figure~\ref{fig:bnn}. 
  The loop endpoints in $D_{1,3}$ correspond to 
  two neighbours $p_1,p_2$ of~$p_0$, which project again to~$p$.  
  The three edges of $D_{1,3}$ 
  give rise to three further neighbours $q_1,q_2,q_3$, projecting onto~$q$. 
	The edges from $p_0$ to $p_1$ and~$p_2$ are opposite at~$p_0$ 
  and contained in a line~$\ell$.
\begin{figure}
	\centering
	\begin{subfigure}[t]{.55\linewidth}
		\centering
		\begin{tikzpicture}[x=3cm,y=3cm]
			\coordinate[dot1](p) at (0,0);
			\coordinate[dot2](q) at (.75,0);
				\node[below,yshift=-1ex] at (p) {$p$};
				\node[below,yshift=-1ex] at (q) {$q$};
			\draw[steiner1](p) to [out=45,in=135] (q);
			\draw[steiner2](p) to [out=-45,in=225] (q);
			\draw[steiner3](p) to [out=135,in=225,looseness=30] (p);
			\draw[steiner4](q) to [out=45,in=-45,looseness=30] (q);
			\draw[steiner5](p) to (q);
		\end{tikzpicture}
	\end{subfigure}
	\hfill
	\begin{subfigure}[t]{.43\linewidth}
		\centering
		\begin{tikzpicture}[x=1.5cm,y=1.5cm]
			\coordinate[dot1](p0) at (0,0);
			\coordinate[dot1](p1) at (0,1.2);
			\coordinate[dot1](p2) at (0,-1.2);
			\coordinate[dot2](q1) at (.5,.866);
			\coordinate[dot2](q2) at (.5,-.866);
			\coordinate[dot2](q3) at (-1,0);
			\coordinate[dot2](q4) at (.5,1.866);
			\draw[steiner1](p0) -- node[right] {$x_1$} (q1);
			\draw[steiner2](p0) -- node[right] {$x_2$} (q2);
			\draw[steiner5](p0) -- node[above] {$x_3$} (q3);
			\draw[steiner3](p0) -- node[left] {$y$} (p1);
			\draw[steiner3](p0) --  (p2);
			\draw[steiner4](q1) -- node[right] {$z$} (q4);
			\node[right] at (p0) {$p_0$};
			\node[right] at (q1) {$q_1$};
			\node[right] at (q2) {$q_2$};
			\node[left] at (q3) {$q_3$};
			\node[right] at (q4) {$q_4$};
			\node[left] at (p1) {$p_1$};
			\node[left] at (p2) {$p_2$};
		\end{tikzpicture}
	\end{subfigure}
	\caption{Topology and embedding of the double bouquet graph $D_{1,3}$.}
	\label{fig:bnn}
\end{figure}
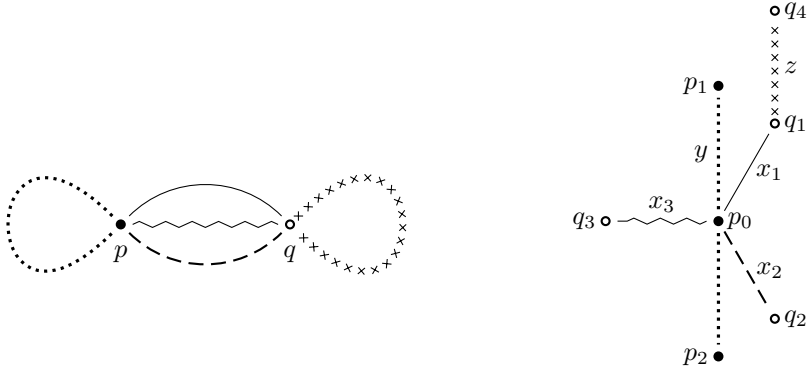
  
  We claim that it is sufficient to verify the theorem for $N$ balanced.  
  Note first that for a network with two vertices in the quotient,
  balancing at one vertex is equivalent to balancing at the other vertex.
  Suppose now $N$ is not balanced.  Then $N$ is not balanced at $p_0$,
  and so replacing $p_0$ with the Fermat point $F$ 
  of the triangle $q_1,q_2,q_3$ yields a balanced network with strictly
  smaller length, but with the same lattice and volume.  
  Possibly, the resulting network is not immersed, namely in case $q_1,q_2,q_3$
  are collinear, or the triangle $q_1,q_2,q_3$ 
  has an interior angle of at least $120$ degrees.
  In that case, however, $F$ coincides with one of the vertices $q_1,q_2,q_3$, 
  and so $N$ can be regarded as a network covering the bouquet graph $B_4$. 
  Applying Theorem~\eqref{th:pcu} gives $L^3/V\geq(4-3+1)\,3^3=54$, 
  % Note $B_4$ has degree 8, so that (d/2 -n+1)n^n = (4-3+1)3^3 = 2*27 = 54
  so that \eqref{eq:bnn} holds strictly.

  Balancing at $p_0$ implies that $q_1,q_2,q_3$ must be coplanar with~$p_0$,
  thereby defining a plane~$P$.
  The same reasoning applies to the three neighbours of~$q_1$ projecting to~$p$,
  they define a plane~$P'$.
  The edge triples defining $P$ and $P'$ agree up to the translation
  from $p_0$ to $q_1$, and so $P=P'$.

  Consider now the line~$\ell'$ through~$q_1$
  determined by its two neighbours projecting to~$p$.
  For the lattice to have rank~$3$, 
  at least one of the lines $\ell,\ell'$ must be transverse to the plane~$P$. 
  Hence $p_1-p_0$ or $q_4-q_1$ is a generator of the lattice. 
  By relabelling let us assume $p_1-p_0$ has this property. 

  The points $q_1,q_2,q_3$ are not collinear % by balancing at p_0
  and define a triangle with positive area $A_\Delta$.
  Thus the volume $V$ of $N/\Lambda$ satisfies 
 	\begin{align}\label{eq:bnnvolume}
  		V \leq 2A_\Delta\dist(p_1,P)\,,
 	\end{align}
 	Equality in~\eqref{eq:bnnvolume} is attained if and only if 
  from the four generators of the homology of $D_{1,3}$,
	\[g_1 := q_1 - q_3\,,\quad g_2 := q_2 - q_3\,, \quad 
    g_3 := p_1 - p_0\,,\quad g_4 := q_4-q_1\,,\]
  the first three span the lattice $\Lambda$.

	Setting $x_i := \vert q_i - p_0\vert$ for $i=1,2,3$, 
	and $y := \vert p_1 - p_0\vert$, $z := \vert q_4 - q_1\vert$ 
	we have $L=x_1+x_2+x_3+y+z$. 
	We may assume a choice of coordinates with $p_0 = 0$ and
	\begin{align}\label{eq:bnncoordinates}
		q_1 = x_1\begin{pmatrix}-1\\0\\0\end{pmatrix}\,,\quad 
		q_2 = \frac{x_2}2\begin{pmatrix}1\\{\sqrt3}\\0\end{pmatrix}\,,\quad 
		q_3 = \frac{x_3}2\begin{pmatrix}1\\-{\sqrt3}\\0\end{pmatrix},
	\end{align}
	which gives
	\begin{align}\label{eq:bnnarea}
		2A_\Delta
		=\bigl\vert\det(q_1-q_3,\,q_2-q_3)\bigr\vert
    =\frac{\sqrt3}2\bigl(x_1x_2+x_1x_3+x_2x_3\bigr)\,.
	\end{align}
	We now distinguish the case $q_4\in P$ from $q_4\notin P$.
		\begin{figure}
			\centering
			\begin{tikzpicture}
				\coordinate[dot1](p0) at (0,0);
				\coordinate[dot2](q1) at (60:1);
				\coordinate[dot2](q2) at (300:1.5);
				\coordinate[dot2](q3) at (180:2);
				\coordinate[dot1](q7) at ($(q1)-(q3)$);
				\coordinate[dot1](q5) at ($(q1)-(q2)$);
				\coordinate[dot1](q6) at ($(q2)-(q3)$);
				\coordinate[dot2](q4) at ($(q7)+(q2)$);
				\draw[steiner2](p0) -- node[right]{$x_2$} (q2);
				\draw[steiner1](p0) -- node[left]{$x_1$} (q1);
				\draw[steiner5](p0) -- node[above]{$x_3$} (q3);
				\draw[steiner2](q1) -- (q5);
				\draw[steiner5](q1) -- (q7);
				\draw[steiner5](q5) -- ($(q5)+.5*(q3)-.5*(p0)$);
				\draw[steiner2](q4) -- (q7);
				\draw[steiner5](q2) -- (q6);
				\draw[steiner2](q3) -- ($(q3)-.6*(q2)$);
				\draw[steiner1](q3) -- ($(q3)-.5*(q1)$);
				\draw[steiner1](q2) -- ($(q2)-.6*(q1)$);
				\draw[steiner1](q6) -- (q4);
				\draw[steiner4]($1.4*(q2) -.4*(q3)$) -- (q2) -- (q3) -- ($1.4*(q3)-.4*(q2)$);
				\draw[steiner4](q4) -- node[below]{$z$} (q1) 
					--+ ($.8*(q1)-.8*(q4)$);
				\node[right] at (p0) {$p_0$};
				\node[above right] at (q3) {$q_3$};
				\node[above right] at (q1) {$q_1$};
				\node[above right] at (q2) {$q_2$};
				\node[right] at (q4) {$q_4$};
			\end{tikzpicture}
			\caption{$P\cap N$ in \ref{item:bnnhorizontal}, where $q_4$ is contained in the plane $P$ spanned by $q_1,q_2,q_3$.}
			\label{fig:bnnhorizontal}
		\end{figure}
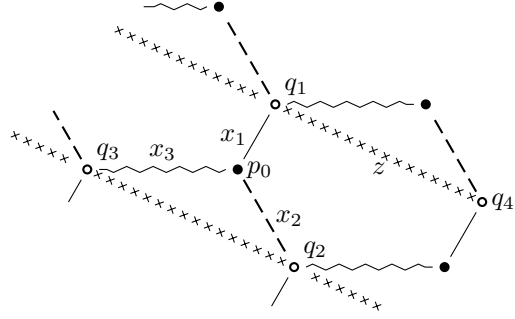
	\begin{caselist}
		\item\label{item:bnnhorizontal} Suppose $q_4\in P$ 
			(cf.~Fig~\ref{fig:bnnhorizontal}). 
      In $\R^3/\Lambda$ the vertex $q_4$ and $q_1,q_2,q_3$ are identified,
			and so in $\R^3$ the smallest lattice vector contained in $P$ gives a 
      lower bound for $\vert q_4-q_1\vert$.  
      For our hexagonal lattice $\Lambda\cap P$ this gives
		\[\vert q_4-q_1\vert\geq\min\big\lbrace\vert q_2-q_1\vert,\vert q_3-q_1\vert,\vert q_3 - q_2\vert\big\rbrace\,.\]
			By relabelling we may assume 
			$\vert q_4 - q_1\vert \geq \vert q_2 - q_1\vert$. 
      This inequality and
      the geometric arithmetic mean inequality give
			\begin{align*}
        z \geq \vert q_2 - q_1\vert 
				= \sqrt{\bigl(x_1+\frac12x_2\bigr)^2+\frac34x_2^2} 
				= \sqrt{\bigl(x_1+x_2\bigr)^2-x_1x_2}
         \ge \frac{\sqrt3}2 \bigl(x_1+x_2\bigr).
			\end{align*}
			Thus we can estimate $s := x_1+x_2+x_3+z$ as
			\begin{align*}
				s \geq %x_1+x_2+x_3+\sqrt{(x_1+x_2)^2-\frac14(x_1+x_2)^2} = 
				\frac{2+\sqrt3}2(x_1+x_2)+x_3\,.
			\end{align*}
		  Moreover, estimating $x_1x_2$ in \eqref{eq:bnnarea} gives
			\begin{align*}
				2A_\Delta %=\frac{\sqrt3}2(x_1x_2+x_1x_3+x_2x_3)
				\leq\frac{\sqrt3}2\Big(\frac14(x_1+x_2)^2+(x_1+x_2)x_3\Big)\,.
			\end{align*}
			We combine the last two inequalities to arrive at
			\begin{align}
				\frac{s^2}{2A_\Delta} \geq \frac{\big((2+\sqrt3)(x_1+x_2)+2x_3\big)^2}{2\sqrt3\big(\frac14(x_1+x_2)^2+(x_1+x_2)x_3\big)}\,.\label{eq:bnnhorizontal}
			\end{align}
			Let us determine the minimum of the right-hand side 
			of~\eqref{eq:bnnhorizontal}. 
			Using scaling invariance of this quotient and $x_1+x_2>0$ 
			we may assume $x_1+x_2=1$.  So it suffices to minimize
			\[
        x_3\mapsto\frac1{2\sqrt3}\frac{\big(2+\sqrt3+2x_3\big)^2}{\frac14+x_3}
        \qquad\text{for }x_3>0\,.
      \]
% Ignoring the factor 1/2\sqrt3, the derivative is:
%   4(2+\sqrt3+2x_3)(1/4 +x_3)-(2+\sqrt3+2x_3)^2 / denominator^2
%   = (2+\sqrt 3+ 2x_3)*(1+4x_3 -2-\sqrt3-2x_3) / denominator^2
%   = (2+\sqrt 3+ 2x_3)*(-1+2x_3-\sqrt3) / denominator^2
% It has zeros at  x_3 = -(2+\sqrt3)/2 < 0 and  x_3 = (1+\sqrt 3)/2 > 0.
% The 2nd zero must assign a minimum of the function restricted
% to (-1/4, \infty), since it tends to \infty at the interval endpoints.
      This function attains its minimal value $2(2+\sqrt3)$ 
			at $x_3=(1+\sqrt3)/2$, and so
			\begin{align*}
				\frac{s^2}{2A_\Delta}\geq2\,(2+\sqrt3)\,.
			\end{align*}
      Inserting this estimate into~\eqref{eq:bnnvolume} and then 
      using an estimate on the geometric mean of the kind 
			$a\big(\frac b2\big)^2\leq\big(\frac{a+b}3\big)^3$ 
      verifies \eqref{eq:bnn} strictly
      (so that equality cannot be attained):
			\begin{align*}
				V &\leq 2A_\Delta\dist(p_1,P)
				\leq\frac1{2(2+\sqrt3)}(x_1+x_2+x_3+z)^2\,y
				\leq\frac2{27(2+\sqrt3)}L^3\,.
        % \leq \frac1{27\sqrt3}L^3
			\end{align*}
% Case 2 starts
		\item\label{item:bnnvertical} 
			Suppose $q_4\notin P$ so that $q_4$ lies in $\Lambda\setminus P$. 
      Since $g_1,g_2,g_3$ generate the lattice
			the edge length $z$ is at least $\dist(P,p_1)$, and so
		\begin{align}\label{eq:bnndistance}
		 	2\dist(p_1,P)\leq y+z\,.
		\end{align}
		On the other hand, estimating~\eqref{eq:bnnarea} by the
    arithmetic and geometric mean inequality gives
		\begin{align}\label{eq:bnnvertical}
      4\sqrt3\,A_\Delta
      =3(x_1x_2+x_1x_3+x_2x_3)
      % use: x_1x_2 \le 1/2 (x_1^2+x_2^2) ==> 3x_1x_2 \le 2x_1x_2+ 1/2(...)
      % and add this up
			\leq(x_1+x_2+x_3)^2 \,.
		\end{align}
		Then the inequality resulting from \eqref{eq:bnnvolume} 
    and~\eqref{eq:bnnarea}
    can be estimated first using \eqref{eq:bnndistance} 
    and~\eqref{eq:bnnvertical}. 
		Finally, the estimate on the geometric mean of the kind used before 
    yields the desired inequality~\eqref{eq:bnn}:
		\begin{align}\label{eq:bnnverticalfinal}
			V \leq \frac1{4\sqrt3}(y+z)(x_1+x_2+x_3)^2\leq\frac1{27\sqrt3}L^3\,.
		\end{align}
    Here equality can be attained: it holds if and only if
		\[2y + 2z = 3x_1 = 3x_2 = 3x_3\quad\text{and}\quad 
			y = z = \dist(P,p_1) = \dist(P,q_4)\,,\]
		so that $N$ consists of parallel layers of honeycomb networks, 
    connected orthogonally.
	\end{caselist}
\end{proof}

We now discuss the other topology of irreducible networks of degree~$5$, 
namely the dipole graph~$D_5$ as the quotient.
Interestingly enough, like the double bouquet graph $D_{1,3}$,
also $D_5$ can be covered by connected parallel layers of hexagonal networks. 
However, the distances between these layers cannot be chosen 
as in Theorem~\ref{thm:bnn} because the four cycles generating the homology
lead to a different integer constraint. The \bnn\ network can be obtained
as a network covering the dipole graph $D_5$. Its quotient graph, however,
is always a covering graph of~$D_5$ with more than two vertices.
Hence another network arises as the optimal covering of~$D_5$, 
called the \sqp\ network:

\begin{theorem}\label{thm:sqp}
	Let $N$ be a triply periodic network with quotient $D_5$.  Then
	\begin{align}\label{eq:sqp}
		\frac{L^3}V\geq\frac{405}8=50.625\,.
	\end{align}
	In case of equality the five neighbours of each vertex 
  form the vertices of a square pyramid with height~$L/3$.
\end{theorem}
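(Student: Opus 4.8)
The plan is to mimic the strategy used for $D_{1,3}$ in Theorem~\ref{thm:bnn} and for the simplex in Theorem~\ref{thm:dipole}, but now exploiting the integer constraint forced by the fact that $D_5$ has circuit rank $4$ while the lattice has rank~$3$. First I would set up notation: let $p_0,q_1\in N$ be two adjacent vertices projecting to the two vertices $p,q$ of $D_5$, let $q_1,\dots,q_5$ be the neighbours of $p_0$, all projecting to $q$, and let $g_i = q_i - q_5$ for $i=1,\dots,4$ be the four cycle generators, of which exactly three span $\Lambda$. As in the previous proofs one argues that it suffices to treat the balanced case: if $N$ is not balanced at $p_0$ we may replace $p_0$ by the minimizer of $L(\Star p_0)$, which strictly decreases length without changing the lattice or volume; the degenerate case where this point collapses onto one of the $q_i$ produces a network covering $B_4$ of degree~$6$, and Theorem~\ref{th:pcu} gives $L^3/V\ge 2\cdot 3^3 = 54 > 405/8$, so equality cannot occur there.

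The geometric heart of the argument is to recognize the balanced configuration of the five neighbours as a square pyramid and to set up the length/volume estimate so that Lemma~\ref{lemma:pyramid} applies with $n=3$, $k=4$, $q=p_0$. Since $D_5$ has four independent cycles but the lattice has rank three, one of the $g_i$ — say $g_4 = q_4 - q_5$ — must be an integer combination of the others; the simplest (and, one expects, optimal) case is $g_4 = -(g_1+g_2+g_3)$ or a similar short relation, which geometrically says that four of the five neighbours, namely $q_1,q_2,q_3,q_4$ (the images of the "base" loops), form a parallelogram-like planar configuration while $q_5$ (together with the edge $p_0 q_1$ continuing to its translate) supplies the transverse lattice direction. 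Concretely I would choose the labelling so that $q_1,q_2,q_3,q_4$ are coplanar, lying in a plane $P$, with $p_0\in P$ as well (forced by balancing), and the fifth neighbour $q_5$ off $P$; the edge from $p_0$ to $q_5$ and its $\Lambda$-translate give the third lattice generator transverse to $P$. Then the fundamental domain volume is bounded by $V \le V_E \cdot 2\dist(p_0,P)$ where $E$ is the quadrilateral $q_1q_2q_3q_4$ — here the factor $2$ comes from the transverse generator having length at least $\dist(P, \text{its translate}) \ge$ the relevant edge, exactly as in \eqref{eq:bnnvolume} and \eqref{eq:bnndistance}. Applying Lemma~\ref{lemma:pyramid} to the pyramid with base $E$ (four base vertices, $k=4$) and apex the off-plane neighbour, with $q = p_0$, yields
\begin{align*}
  \frac{(L(\Star p_0))^3}{V(\Delta)} \ge \frac{9}{V_E}\left(\frac{15}{16}\cdot\frac{3s}{2}\right)^2 = \frac{9}{V_E}\cdot\frac{2025 s^2}{1024},
\end{align*}
where $s$ is the total length of the four edges to the base vertices; combined with $V(\Delta) = \frac13 V_E \cdot(\text{apex height})$ and the volume bound this is designed to collapse to $L^3/V \ge 405/8$ after optimizing in the remaining free ratio (apex-edge length versus base-edge lengths) by the now-familiar geometric-mean trick $a(b/2)^2 \le ((a+b)/3)^3$ type inequality. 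The equality analysis in Lemma~\ref{lemma:pyramid}, equation~\eqref{eq:pyramidequality}, then forces the apex edge $q_5 - p_0$ to be perpendicular to $P$, the four base edges $x_1=\dots=x_4$ equal, and $\dist(p_0,P) = s/16$; together with the equalities in the volume and geometric-mean steps this pins down the configuration as a square pyramid (the four equal base edges with $p_0$ at the centre of a square) of height $L/3$, as claimed.

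The main obstacle I anticipate is the case analysis coming from the integer constraint: unlike the $D_{1,3}$ case where the loop structure rigidly separates the "planar" edges from the "transverse" edge, in $D_5$ all five edges are bridges, so a priori several of the $q_i$ could be off the plane $P$ determined by balancing, and one must argue that the balanced condition $\sum (p_0 - q_i)/|p_0 - q_i| = 0$ together with rank$(\Lambda) = 3$ and the integer relation among the $g_i$ actually forces (in the equality or near-equality regime) exactly four coplanar neighbours. This likely requires splitting into subcases according to how many of the $q_i$ lie in a common plane through $p_0$ and which integer relation $g_4 = \sum m_i g_i$ holds, showing that any relation with $|m_i|\ge 2$ or a splitting other than $4+1$ gives a strictly worse bound (using, e.g., that larger coefficients force longer transverse edges, hence more length for the same volume). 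A secondary technical point is verifying that Lemma~\ref{lemma:pyramid} with $k=4 > n=3$ is genuinely applicable — the lemma was stated for $k\ge n$, so $k=4$ is allowed — and that the quadrilateral base $E$ has positive area $V_E$, which holds as soon as the four base neighbours are not collinear; collinearity would again degenerate to a lower-dimensional or lower-degree situation handled by the earlier theorems.
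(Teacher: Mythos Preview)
Your architecture matches the paper's: apply Lemma~\ref{lemma:pyramid} with $k=4$ in the case that four neighbours are coplanar, and rule out the remaining configurations by casework on the integer relation among the four cycle generators. The execution of the coplanar case, however, has a genuine gap.

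You assert that balancing forces the centre vertex $p_0$ into the plane $P$ of $q_1,\dots,q_4$. This is false: with $q_5\notin P$, the unit vector from $p_0$ toward $q_5$ has a nonzero $P^\perp$-component which the four in-plane edges cannot cancel if $p_0\in P$; balancing therefore forces $p_0\notin P$, and indeed the equality clause~\eqref{eq:pyramidequality} of the pyramid lemma gives $\dist(p_0,P)=s/16>0$. Your volume bound $V\le 2V_E\,\dist(p_0,P)$ is correspondingly misdirected. The lattice is generated by differences $q_i-q_j$ of neighbours and does not see the centre vertex at all: with $q_5$ the off-plane neighbour and $T$ the triangle on the three lattice-generating $q_i\in P$, one has $V=2\,\area T\cdot\dist(q_5,P)$, and the integer constraint enters via $\area E\ge 2\,\area T$ (equality precisely when $E$ is a parallelogram), giving $V\le 3\vol\Delta$. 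Combining this with the lemma produces the paper's inequality~\eqref{eq:sqppyramid}; the remaining step is to maximize $\area E$ subject to the equal base-edge lengths already forced by~\eqref{eq:pyramidequality}, which pins $E$ down as a square---the apex-to-base ratio has already been optimized inside the lemma, so no further geometric-mean step of the type you describe is needed. (The paper does not reduce to the balanced case here at all; Lemma~\ref{lemma:pyramid} applies to arbitrary~$q$.) For the non-coplanar case the paper writes the redundant neighbour as $p_4=\lambda_1p_1+\lambda_2p_2+\lambda_3p_3$ with $p_0=0$ and runs four exhaustive subcases on the sizes of the~$\lambda_i$; in each it locates a plane through three of the $p_i$ that separates the remaining two with distance ratio at least~$2$, yielding a strict bound such as $L^3/V\ge 81\sqrt3/2$ that exceeds $405/8$.
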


\begin{proof}
	Pick an arbitrary vertex $q_0\in N$ together with 
  its five neighbours $p_0,\ldots,p_4$.  
  Note that $L$ is the sum of the five edge lengths from $q_0$ to
  these points.
	\begin{figure}
		\centering
		\begin{subfigure}[t]{.45\linewidth}
			\centering
			\begin{tikzpicture}[x=3cm,y=3cm]
				\coordinate[dot1](p) at (0,0);
				\coordinate[dot2](q) at (1,0);
				\draw[steiner1](p) to [out=80,in=100] (q);
				\draw[steiner2](p) to [out=30,in=150] (q);
				\draw[steiner3](p) to [out=-30,in=210] (q);
				\draw[steiner4](p) to [out=-80,in=260] (q);
				\draw[steiner5](p) to (q);
				\node[right] at (q) {$q$};
				\node[left] at (p) {$p$};
			\end{tikzpicture}
		\end{subfigure}
		\begin{subfigure}[t]{.45\linewidth}
			\centering
			\begin{tikzpicture}[x=1.5cm,y=1.5cm]
				\coordinate[dot1](q0) at (0,0);
				\coordinate[dot2](p2) at (-.2,-1.2);
				\coordinate[dot2](p4) at (.5,.866);
				\coordinate[dot2](p3) at (.5,-.866);
				\coordinate[dot2](p1) at (-1,0);
				\coordinate[dot2](p0) at (-.3,1.2);
				\draw[steiner3](q0) -- node[above] {$x_1$} (p1);
				\draw[steiner1](q0) -- node[left] {$x_2$} (p2);
				\draw[steiner2](q0) -- node[right] {$x_3$} (p3);
				\draw[steiner5](q0) -- node[right] {$x_4$} (p4);
				\draw[steiner4](q0) -- node[left] {$x_0$} (p0);
				\node[right] at (q0) {$q_0$};
				\node[left] at (p1) {$p_1$};
				\node[right] at (p2) {$p_2$};
				\node[right] at (p3) {$p_3$};
				\node[left] at (p4) {$p_4$};
				\node[left] at (p0) {$p_0$};
			\end{tikzpicture}
		\end{subfigure}
		\caption{Topology and embedding of the dipole graph $D_5$.}
		\label{fig:sqp}
	\end{figure}
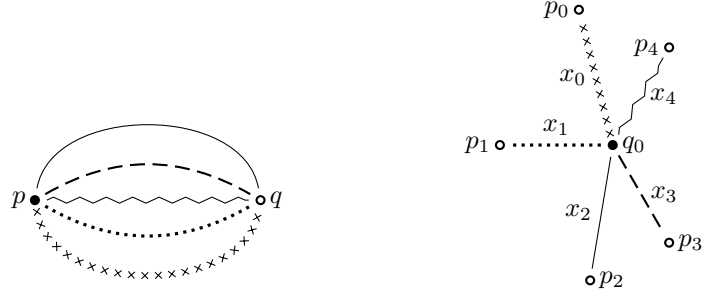

	We consider first the case that there exists a plane $P$ 
  which contains four of the neighbours~$p_i$.  
  Then the fifth neighbour cannot be contained in~$P$; 
  we suppose it is labelled~$p_0$.
  Moreover, we may assume the labelling is such that $p_4$ lies 
  on the lattice spanned by $p_1,p_2,p_3$.  
  Denote with $T$ the triangle in $P$ with vertices $p_1,p_2,p_3$.
% We assume the lattice is spanned by the three generators of homology
% \[ g_1:=p_2-p_1,\qquad g_2:=p_3-p_1,\qquad g_3 := p_0-p_1 \]
% Then $p_4-p_1$ is an integer combination of $g_1,g_2$.

  The convex hull of the four points $p_1$ to $p_4$ 
	is a triangle or a quadrilateral~$E\subset P$.
  By our assumption and the fact that $N$ is immersed, 
  % p_4 must be distinct from p_1 to p_3.
  its area satisfies $\area E\ge 2\area T$, 
  where equality corresponds to~$E$ being a parallelogramme.
  Denote by~$\Delta$ the pyramid with base~$E$ and apex~$p_0$.
  The volume~$V$ of a fundamental domain for the lattice 
  then is at most $3\vol\Delta$.
  % Reasoning: The parallelogramme S spanned by p_1,p_2,p_3, together with p_0
  % gives another pyramid Q, such that  V = 3*vol(Q) = 3*h*A(S), 
  % where h is the height of p_0 over the plane containing S.
  % But we have A_E \ge A(S), hence V \le 3*h*A(S) = 3*vol(\Delta).
  % (So equality holds <==> A_E = 2 triangle area)
  As in Lemma~\ref{lemma:pyramid} we set
	$x_i:=\vert p_i-q_0\vert$ for $i=1,\ldots,5$, and $s:=x_1+x_2+x_3+x_4$.  
  The volume estimate and the lemma give
	\begin{align}\label{eq:sqppyramid}
		\frac{L^3}V
		\geq\frac{L^3}{3\operatorname{vol}\Delta}
    % = 9/(3*A_E) * (15*3*s)/(16*2)
		\geq\frac3{\area E}\Bigl(\frac{45}{32}\,s\Bigr)^2.
	\end{align}

  Equality in \eqref{eq:sqppyramid} is equivalent to both inequalities 
  attaining equality.  
  The first inequality holds with equality 
  if $\area E=2\area T$ so that $E$ is a parallelogramme.  
  Lemma~\ref{lemma:pyramid} characterizes the case that 
  the second inequality holds with equality:
  We must have 
  \begin{equation}\label{equcase}
    x_1=x_2=x_3=x_4=\frac8{13}x_0\qquad\text{and}\qquad \dist(q_0,P)=\frac14x_1,
  \end{equation}
  as well as $p_0-q_0$ perpendicular to $P$.
  Since \eqref{equcase} implies that
  $p_1$ to $p_4$ are contained in a circle in $P$, 
  the parallelogramme must be a rectangle,
  and moreover $p_0$, $q_0$ project orthogonally onto its midpoint, 
  having distances from $P$ prescribed by~\eqref{equcase}.

  Among the equality cases, \eqref{eq:sqppyramid} attains
  its minimum when the right hand side is minimal; 
  moreover, this establishes a valid lower bound for the length quotient $L^3/V$.
  The only freedom is the conformal parameter of the rectangle. 
  Clearly, minimality of \eqref{eq:sqppyramid} occurs for maximal $\area E$, 
  i.e., for $E$ a square.  To compute \eqref{eq:sqppyramid} for
  this case note the diagonal of~$E$ has a length~$c$ satisfying
  \[
    \Bigl(\frac c2\Bigr)^2=x_1^2-\Bigl(\frac{x_1}4\Bigr)^2 
    = \frac{15}{16}\,x_1^2,\quad\text{ and so }\quad
    \area E = 2\Bigl(\frac c2\Bigr)^2 
    = \frac{15}8\, x_1^2.
  \] 
	Inserting this expression into \eqref{eq:sqppyramid}, thereby using $s=4x_1$, 
  gives the desired estimate~\eqref{eq:sqp}
	and verifies the claims for the equality case.
  % 3 / (15 x_1^2 /8) * ((45/32)*4x_1)^2
  % = 3/15 * 8 * 45^2 / 8^2 = 3 * 3 * 45 / 8 = 405/8
  % 
  % Verify height(\Delta)=L/3 in the equality case:
  % height is h = 1/4 x_1 + x_0 = (1/4  + 13/8) x_1 = 15/8 x_1
  % length is L = 4x_1 + x_0 = (4 + 13/8) x_1 = 45/8 x_1
  % quotient gives  L/3 = h  indeed.

	\begin{figure}
		\centering
		\begin{subfigure}[t]{.48\linewidth}
			\centering
			%\begin{tikzpicture}[x=3cm,y=3cm]
			\begin{tikzpicture}[x=2.8cm,y=2.8cm] % for nice pagebreak
				\coordinate[dot2](p0) at (0,0);
				\coordinate[dot2](p3) at (1,1);
				\coordinate[dot2](p2) at (.5,1.2);
				\coordinate[dot2](p1) at (-.5,1);
				\coordinate[dot2](p4) at ($(p1)+(p2)$);
				\coordinate[dot1](q0) at (.2,.8);
				\draw[dotted,thick,gray,->](p1)--(p4);
				\draw[dotted,thick,gray,->](p2)--(p4);
				\draw[dotted,thick,gray,->](p3)--(p4);
				\draw(q0)--(p0);
				\draw(q0)--(p1);
				\draw(q0)--(p2);
				\draw(q0)--(p3);
				\draw(q0)--(p4);
				\draw[fill=gray,gray,fill opacity=.2](1,1)--(.5,1.2)--(-.5,1)--(1,1);
				\draw[->,thick,gray](p0)--node[left]{$g_1$}(p1);
				\draw[->,thick,gray](p0)--node[right]{$g_2$}(p2);
				\draw[->,thick,gray](p0)--node[right]{$g_3$}(p3);
				\node[below] at (p0) {$p_0=0$};
				\node[left] at (p1) {$p_1$};
				\node[above right] at (p2) {$p_2$};
				\node[right] at (p3) {$p_3$};
				\node[right] at (p4) {$p_4$};
				\node[below left] at (q0) {$q_0$};
			\end{tikzpicture}
		\end{subfigure}
		\caption{Notation for Theorem~\ref{thm:sqp}.  
      Shown is a case where the vertices $p_0,\ldots,p_4$ do not form a pyramid.}
		\label{fig:sqpproof}
	\end{figure}
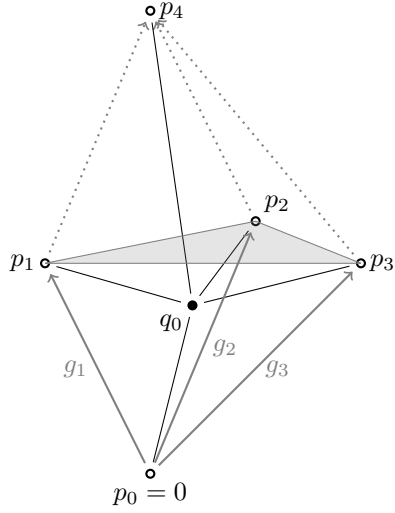

	Now suppose no four $p_i$'s are coplanar.
  %, i.e., there is no pyramid with vertices $p_0,\ldots,p_4$.
	We may assume that $p_0$ is the origin and the indexing is such that
  the lattice $\Lambda$ is spanned by $g_i := p_i - p_0$ for $i=1,2,3$, 
	see Figure~\ref{fig:sqpproof}. 
  Then $p_4$ is a lattice vector and so there are integer coefficients 
	$\lambda_1,\lambda_2,\lambda_3\in\mathbb Z$ such that
	\begin{align}\label{eq:sqpcoefficients}
		p_4 = \lambda_1p_1+\lambda_2p_2+\lambda_3p_3\,.
	\end{align}
	Let $P_{123}$ be the plane through $p_1,p_2,p_3$ and consider the vector
	\[n_{123} := (p_2-p_1)\times(p_3-p_1)\]
	normal to $P_{123}$. 
	The point $p_4$ has a signed distance from $P_{123}$ given by
	$\sdist(p_4,P_{123}) =  \langle n_{123}/\vert n_{123}\vert,p_4-p_1\rangle$.
  Rewriting \eqref{eq:sqpcoefficients} as
	\[p_4-p_1=(\lambda_1+\lambda_2+\lambda_3-1)p_1 + \lambda_2(p_2-p_1)+\lambda_3(p_3-p_1)\,,\]
  we see the signed distance is
	\begin{align}\label{eq:sqp123}
		\sdist(p_4,P_{123}) 
		%= \frac1{\vert n_{123}\vert}\langle n_{123},p_4-p_1\rangle 
		= (\lambda_1+\lambda_2+\lambda_3-1)\mathcal V\,,
	\end{align}
	where $\mathcal V = \langle p_1,p_2\times p_3\rangle=\det(p_1,p_2,p_3)$ 
	is a signed volume of~$N/\Lambda$. 
% Indeed, by assumption p_4 is not within the solid tetrahedron p0,p1,p2,p3. 
% Thus there is a hyperplane extending a face which separates the tetrahedron
% form p_4
	After relabeling we may assume that $p_0$ and~$p_4$ lie on different sides 
	of~$P_{123}$, so that $\lambda_1+\lambda_2+\lambda_3\geq1$. 
  Since no four $p_i$'s are coplanar, 
	in fact $\lambda_1+\lambda_2+\lambda_3\geq2$ 
  and $\lambda_i\neq0$ for $i=1,2,3$.
  Moreover, we may assume $p_1,p_2,p_3$ are indexed 
	such that $\lambda_1\leq\lambda_2\leq\lambda_3$. 

	We now distinguish four cases for $(\lambda_1,\lambda_2,\lambda_3)$.
	In all cases there is a plane~$P$ through three of the~$p_i$ 
	such that the remaining two vertices lie to opposite sides of~$P$ 
  at different distances.
  In all cases, the result will be lower bound on $L^3/V$
  which is strictly larger than~\eqref{eq:sqp}. 
%	We calculate this lower bound explicitly in \ref{item:sqplarge}.
	\begin{caselist}
		\item\label{item:sqplarge} Suppose $\lambda_1+\lambda_2+\lambda_3\geq3$. 
		Equivalently, by \eqref{eq:sqp123}, we have
		% \[ \dist(p_4,P_{123}) \geq 2\dist(p_0,P_{123}) \,.  \]
		\[2\dist(p_0,P_{123})\leq\dist(p_4,P_{123})\,.\]
		Thus, if $A_{123}$ denotes the area of the triangle with 
    vertices $p_1,p_2,p_3$ we find
		\begin{align}\label{eq:sqphcbvolume}
			V = 2A_{123}\dist(p_0,P_{123}) 
      % = 2A_{123}(\dist(p_0,P_{123})+2\dist(p_0,P_{123})/3 
			&\leq2A_{123}\frac{\dist(p_0,P_{123})+\dist(p_4,P_{123})}3\,.
		\end{align}
		We set $x_i := \vert p_i-q_0\vert$ for $i=0,\ldots,4$, and claim
		\begin{align}\label{eq:sqpmaincase}
			A_{123}\leq \frac1{\sqrt3}\Bigl(\frac{x_1+x_2+x_3}2\Bigr)^2 \,.
		\end{align}
		
		To verify the claim, assume~$q_0$ minimizes $x_1+x_2+x_3$. 
    If $q_0$ coincides with $p_3$, 
    then the estimate on geometric and arithmetic mean gives
		\[A_{123}\leq\frac12x_1x_2\leq\frac12\Bigl(\frac{x_1+x_2+x_3}2\Bigr)^2 \,,\]
		thus proving~\eqref{eq:sqpmaincase}. 
    The same reasoning leads to \eqref{eq:sqpmaincase} if $q_0=p_1$ or~$q_0=p_2$. 
    If, however, $q_0\notin\{p_1,p_2,p_3\}$, then the network is balanced at~$q_0$.  
    Choosing coordinates as in \eqref{eq:bnncoordinates} 
    leads to estimate~\eqref{eq:bnnvertical}. This proves the claim.
		
		Inserting~\eqref{eq:sqpmaincase} into \eqref{eq:sqphcbvolume} gives
		\begin{align*}
			V\leq\frac2{3\sqrt3}\Bigl(\frac{x_1+x_2+x_3}2\Bigr)^2(x_0+x_4)
      \leq\frac2{81\sqrt 3}L^3\,.
		\end{align*}
    % This gives $L^3/V\ge 81{\sqrt 3}/2 \ge 40\sqrt 3 \ge 405/8$.
		This verifies \eqref{eq:sqp}.
		\item Suppose $\lambda_1\leq-2$. 
		We consider the plane $P_{023}$ spanned by $p_0,p_2,p_3$ 
		with normal vector $n_{023}:=p_2\times p_3$. 
		Using \eqref{eq:sqpcoefficients}, we have
		\[\vert n_{023}\vert\sdist(p_1,P_{023})=\langle n_{023},p_1\rangle = \mathcal V\,,\qquad\vert n_{023}\vert\sdist(p_4,P_{023})=\langle n_{023},p_4\rangle=\lambda_1\mathcal V\,.\]
		Since $\lambda_1\leq-2$, 
		the vertices $p_1$ and $p_4$ lie on opposite sides of $P_{023}$ and
		\[2\dist(p_1,P_{023})\leq\dist(p_4,P_{023})\,.\]
		As in \ref{item:sqplarge} 
		we obtain again $L^3/V\ge 81\sqrt 3/2$. 
		\item Suppose $\lambda_3\geq3$. Then, by \eqref{eq:sqpcoefficients}, 
		\[\vert\det(p_1,p_2,p_4)\vert = \lambda_3\vert\det(p_1,p_2,p_3)\vert\geq 3V\,.\]
		Applying the estimate \eqref{eq:dia} on the length of a network covering~$D_4$
		to the subnetwork spanned by the four edges from $q_0$ to $p_0,p_1,p_2,p_4$ 
    shows again that \eqref{eq:sqp} holds strictly:
		\[L^3>(x_0+x_1+x_2+x_4)^3   % ">" since L has one more term
      \geq12\sqrt3\,\vert\det(p_1,p_2,p_4)\vert 
          % now we use the estimate, noting p_0=0, the volume is det(p_1,p_2,p_4)
			\geq36\sqrt3\,V\,.        % by the above
    \]
		\item Finally, assume $\lambda_1=-1$, $\lambda_2=1$ and $\lambda_3=2$. In this case we consider the plane $P_{014}$ spanned by $p_0,p_1,p_4$ with normal vector $n_{014}:=p_1\times p_4$. Then, by \eqref{eq:sqpcoefficients},
		\[\vert n_{014}\vert\sdist(p_2,P_{014})=\langle n_{014},p_2\rangle=-2\mathcal V,\qquad\vert n_{014}\vert\sdist(p_3,P_{014})=\langle n_{014},p_3\rangle=\mathcal V\,.\]
		So the vertices $p_2$ and $p_3$ lie on opposite sides of $P_{014}$, and
		\[2\dist(p_3,P_{014})\leq\dist(p_2,P_{014})\,.\]
		After relabelling the $p_i$ we proceed again as in~\ref{item:sqplarge}.
	\end{caselist}
	A moment's thought gives that the four cases cover all admissible values for $\lambda_1,\lambda_2,\lambda_3$, and so \eqref{eq:bnn} holds strictly when no four
  $p_i$ are coplanar.
\end{proof}
The length quotient for irreducible triply periodic networks of a
degree higher than $6$ must be larger than the value obtained for
the two irreducible networks of degree~$5$. 

\begin{corollary}\label{cor:highdegthree}
	Let $N$ be an irreducible triply periodic network of degree $d\geq7$. Then
	\begin{align}\label{eq:highdegthree}
		\frac{L^3}V>\frac{405}8\,.
	\end{align}
\end{corollary}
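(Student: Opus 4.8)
The proof splits on the parity of $d$. If $d$ is even then $d\ge 8=2n+2$, and Theorem~\ref{th:pcu} gives at once
\[
\frac{L^3}{V}\ \ge\ \Big(\tfrac d2-2\Big)\cdot 27\ \ge\ 54\ >\ \tfrac{405}{8}.
\]
So from now on $d$ is odd, $d\ge 7$, and by Proposition~\ref{prop:structure} the quotient $N/\Lambda$ is a double bouquet $D_{\ell,k}$ with $k$ odd, $3\le k\le d$ and $d=2\ell+k$. The plan is to distinguish whether the $k$ bridges are ``spatial'' or coplanar.

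Suppose first that the $k-1$ bigon cycles of $D_{\ell,k}$ generate a sublattice of $\Lambda$ of full rank~$3$; since $k$ is odd this forces $k\ge5$. I then take the $\Lambda$-orbit $\tilde N\subset N$ of five bridges whose bigons generate a rank-$3$ sublattice $\Lambda'\subseteq\Lambda$: this is an immersed subnetwork (its stars are substars of the embedded stars of $N$), connected, periodic under $\Lambda'$, with quotient $D_5$. Since $V(\R^3/\Lambda')=[\Lambda:\Lambda']\,V$, Theorem~\ref{thm:sqp} applied to $\tilde N$ gives $L(\tilde N)^3\ge\tfrac{405}{8}[\Lambda:\Lambda']V\ge\tfrac{405}{8}V$. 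As $d>5$ there are edges of $N$ outside $\tilde N$, hence $L>L(\tilde N)$ and the strict inequality $L^3/V>\tfrac{405}{8}$ follows.

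Now suppose the bigons span only a rank-$2$ sublattice — the unavoidable case when $k=3$, and one which forces $\ell\ge1$. Here I would mimic the proof of Theorem~\ref{thm:bnn}. First reduce to $N$ balanced: replacing the $\Lambda$-orbit of a vertex $p_0$ by the Fermat point of its $k$ bridge-neighbours preserves $\Lambda$ and $V$ and, since the $\ell$ opposite loop pairs at $p_0$ exert no force and do not change length under the move, strictly shortens $N$; should the result fail to be immersed, the Fermat point coincides with a bridge-neighbour and $N$ covers the bouquet $B_{d-1}$, so Theorem~\ref{th:pcu} gives $L^3/V\ge(d-3)\cdot27\ge108$. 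For $N$ balanced the points $q_1,\dots,q_k$ lie in a common plane $P$ together with $p_0$; write $s$ for the total bridge length, $\Lambda_P:=\Lambda\cap P$ (a rank-$2$ lattice containing the bigons) with covolume $\delta_P$, and $H$ for the spacing of the $\Lambda$-layers parallel to $P$, so that $V=\delta_P H$. The triangle-area estimate of the \bnn\ proof (cf.\ \eqref{eq:bnnvertical}), applied to a suitable triple of bridges, bounds $\delta_P$ by a fixed fraction of $s^2$; in addition $\delta_P\le\rho\,s$ with $\rho:=\lambda_1(\Lambda_P)$, because two bigons furnish independent vectors of $\Lambda_P$ of length $<s$. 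Since $\Lambda$ has rank~$3$, at least one loop is transverse to $P$ (length $\ge H$), while every in-plane loop has length $\ge\rho$. When $k=3$ there are $2\ell\ge4$ loops, so $L\ge s+H+3\min(H,\rho)$, and a routine one/two-variable optimization — using $V\le\rho s H$ when $\rho\le H$ and $V\le(\mathrm{const})\,s^2H$ when $\rho>H$ — yields $L^3/V>81>\tfrac{405}{8}$. When $k\ge5$ one has $\ell\ge1$ and a larger bridge term $s$; if $\ell\ge2$ one discards all but three bridges spanning $P$ together with all loops and reduces to the $k=3$ estimate, and the remaining configuration $k\ge5,\ \ell=1$ is settled by inserting the extremal (regular) planar bridge configuration into the area estimate.

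The main obstacle is this last paragraph: everything else reduces quickly to Theorems~\ref{th:pcu} and~\ref{thm:sqp} once one notes that deleting or shortening edges strictly decreases $L$ at fixed lattice. In the coplanar case the work is to transport the layer decomposition $V=\delta_P H$ and the triangle-area bound from the proof of Theorem~\ref{thm:bnn} to the setting with additional loops; the inequality $\delta_P\le\rho\,s$ is exactly what keeps the estimate above the value $27\sqrt3$ attained by $D_{1,3}$, and the numerically delicate point is the residual configuration with two loops and five or more coplanar bridges, where the tight constant in the planar area estimate is needed.
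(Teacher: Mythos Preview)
Your even-degree case and your Case~A' (bigons of rank~$3$) are correct; passing to a $D_5$ subnetwork and invoking Theorem~\ref{thm:sqp} is exactly what the paper does in the analogous situation, and strict inequality follows from the discarded edges. One small wrinkle: the $\Lambda$-orbit of five bridges need not be connected when the bigons generate a proper sublattice~$\Lambda'$; you should take a connected component, which is then $\Lambda'$-periodic with quotient $D_5$, and the inequality $V(\R^3/\Lambda')\ge V$ works in your favour.

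The gap is Case~B', and you say so yourself (``the main obstacle is this last paragraph''). The balancing reduction and the layer decomposition $V=\delta_P H$ are sound, but the remainder is programmatic rather than proved: the ``routine one/two-variable optimization'' for $k=3$ is not carried out and the asserted output $L^3/V>81$ is unsubstantiated; the reduction ``discard all but three bridges'' for $k\ge5$, $\ell\ge2$ throws away length without a compensating estimate; and the residual configuration $k\ge5$, $\ell=1$ is left as an assertion about ``inserting the extremal planar bridge configuration''. As written, Case~B' does not establish the bound.

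The paper sidesteps your direct estimation by splitting on the rank spanned by the \emph{loop} vectors rather than the bigons. If at least three loop vectors are independent, one exploits that each loop is a closed geodesic in $\R^3/\Lambda$ whose length is translation-invariant: translate four such loops to a common basepoint and apply the reasoning of Theorem~\ref{th:pcu} to the resulting $B_4$ configuration, obtaining $L^3/V\ge54$. If at most two loop vectors are independent, the paper extracts a proper triply periodic subnetwork $N'\subsetneq N$ covering $D_{1,3}$ or $D_5$ and invokes Theorem~\ref{thm:bnn} or Theorem~\ref{thm:sqp} directly; strict inequality then comes from $L>L(N')$. The translation trick for loops is the idea you are missing: it converts the difficult ``coplanar bridges with many loops'' scenario into either the $B_4$ estimate or a straight reduction to the degree-$5$ theorems, with no new optimization required.
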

\noindent
Thus for dimension $n=3$ the length quotient 
of networks with degree $d\geq7$ is always larger than the quotient for
all explicitly discussed cases with degree $3$ to~$6$.
\begin{proof}
	For even $d\geq8$ the quotient network $N/\Lambda$ covers the bouquet graph $B_{d/2}$. 
  Then~\eqref{eq:highdegthree} follows immediately from Theorem~\ref{th:pcu}, as
	\begin{align}\label{eq:highdegthreeeven}
		\frac{L^3}V\geq\Bigl(\frac82-3+1\Bigr)\,3^3=54\,.
	\end{align}

  For odd degree~$d$ the quotient network $N/\Lambda$ is classified by Proposition~\ref{prop:structure}:
  It covers the double bouquet graph $D_{\ell,k}$ with $k\geq3$ and $\ell\geq0$.
	Assume first the number of loops in $D_{\ell,k}$ which lift to generators of the lattice $\Lambda$ is at least $3$. Then $\ell\geq2$ and $N/\Lambda$ contains a (possibly disconnected) subgraph $N'/\Lambda$ which consists of four closed geodesics in $\R^3/\Lambda$, three of which lift to generators of~$\Lambda$. 
Note that the length of a closed geodesic 
is invariant under translation. So we may estimate the length of $N'/\Lambda$ by a network where the four geodesics intersect at one vertex. The reasoning of 
the proof of Theorem~\ref{th:pcu} then yields~\eqref{eq:highdegthreeeven}. 
%	\[\frac{L^3}V\geq\Bigl(\frac82-3+1\Bigr)\,3^3=54\,.\]
	
	Now suppose that the loops in $D_{\ell,k}$ lift to at most two generators of $\Lambda$. 
  If exactly two loops lift to generators of $\Lambda$, 
  then if necessary we reason as before to assume that each lift is based 
  at a different vertex of~$N/\Lambda$. 
  Thus in any case $N$ contains a subnetwork $N'\subsetneq N$ 
  which is triply periodic and covers~$D_{1,3}$ or~$D_5$. 
  We conclude the length quotient of $N'$ is estimated by Theorem~\ref{thm:bnn} 
  or~\ref{thm:sqp}, and $N$ has a strictly larger quotient, as desired.
\end{proof}

\nocite{*}
\bibliographystyle{amsplain}
\bibliography{degree}

\providecommand{\bysame}{\leavevmode\hbox to3em{\hrulefill}\thinspace}
\providecommand{\MR}{\relax\ifhmode\unskip\space\fi MR }
% \MRhref is called by the amsart/book/proc definition of \MR.
\providecommand{\MRhref}[2]{%
  \href{http://www.ams.org/mathscinet-getitem?mr=#1}{#2}
}
\providecommand{\href}[2]{#2}
\begin{thebibliography}{10}

\bibitem{periodicsteiner}
Jerome Alex and Karsten Grosse-Brauckmann, \emph{Periodic {S}teiner graphs
  minimizing length}, arXiv:1705.02471, 2017.

\bibitem{batten2009coordination}
Stuart~R. Batten, Suzanne~M. Neville, and David~R. Turner, \emph{Coordination
  polymers: Design, analysis and application}, RSC, 2008.

\bibitem{deCampo}
Liliana de~Campo, Olaf Delgado-Friedrichs, Stephen~T. Hyde, and Michael
  O'Keeffe, \emph{Minimal nets and minimal minimal surfaces}, Acta Cryst.
  \textbf{A69} (2013), no.~5, 483--489.

\bibitem{kgbgyroidinterface}
Karsten Grosse-Brauckmann, \emph{On gyroid interfaces}, J. Colloid Interface
  Sci. \textbf{187} (1997), 418--428.

\bibitem{kgbroyalsociety}
\bysame, \emph{Triply periodic minimal and constant mean curvature surfaces},
  Interface Focus \textbf{2} (2012), 529--538.

\bibitem{chemistrystructure}
Stephen~T. Hyde, Michael O'Keeffe, and Davide~M. Proserpio, \emph{A short
  history of an elusive yet ubiquitous structure in chemistry, materials, and
  mathematics}, Angew. Chem. Int. Ed. \textbf{47} (2008), 7996--8000.

\bibitem{minimalnetworks}
Alexandr Ivanov and Alexei Tuzhilin, \emph{Minimal networks. {T}he {S}teiner
  problem and its generalizations}, CRC Press, 1994.

\bibitem{Kapouleas1990}
Nicolaos Kapouleas, \emph{Complete constant mean curvature surfaces in
  euclidean three-space}, Annals of Mathematics \textbf{131} (1990), no.~2,
  239--330.

\bibitem{Karcher1989}
Hermann Karcher, \emph{The triply periodic minimal surfaces of alan schoen and
  their constant mean curvature companions}, manuscripta mathematica
  \textbf{64} (1989), no.~3, 291--357.

\bibitem{kotani2001standard}
Motoko Kotani and Toshikazu Sunada, \emph{Standard realizations of crystal
  lattices via harmonic maps}, Trans. Amer. Math. Soc. \textbf{353} (2001),
  no.~1, 1--20.

\bibitem{Kusner1991}
Rob Kusner, \emph{Bubbles, conservation laws, and balanced diagrams}, Geometric
  Analysis and Computer Graphics (New York, NY) (Paul Concus, Robert Finn, and
  David~A. Hoffman, eds.), Springer New York, 1991, pp.~103--108.

\bibitem{gyroidschoen}
Alan~H. Schoen, \emph{Infinite periodic minimal surfaces without
  self-intersections}, NASA Technical Note TN D-5541 (1970).

\bibitem{SchroderTurk2006}
Gerd Schr{\"o}der-Turk, Andrew Fogden, and Stephen~T. Hyde, \emph{Bicontinuous
  geometries and molecular self-assembly: comparison of local curvature and
  global packing variations in genus-three cubic, tetragonal and rhombohedral
  surfaces}, Eur. Phys. J. B \textbf{54} (2006), no.~4, 509--524.

\bibitem{crystals}
Toshikazu Sunada, \emph{Crystals that nature might miss creating}, Notices
  Amer. Math. Soc. (2008), 208--215.

\bibitem{sunada2012topological}
\bysame, \emph{Topological crystallography: With a view towards discrete
  geometric analysis}, Surveys and Tutorials in the Applied Mathematical
  Sciences, Springer Japan, 2012.

\bibitem{Traizet2017}
Martin Traizet, \emph{Construction of constant mean curvature n-noids using the
  {DPW} method}, J. Reine Angew. Math. (2018).

\end{thebibliography}

\end{document}